\newtheorem{theorem}{Theorem}
\newtheorem{lemma}[theorem]{Lemma}
\newtheorem{proposition}[theorem]{Proposition}
\newtheorem{corollary}[theorem]{Corollary}
\newtheorem*{conjecture}
{Conjecture}
\newtheorem{remark}[theorem]{Remark}
\newtheorem{definition}[theorem]{Definition}
\newcommand{\C}{{\mathcal C}}
\newcommand{\Hi}{{\mathcal H}}
\newcommand{\N}{{\mathbb N}}
\newcommand{ \monus }{\stackrel{\bullet}{-}}
\newcommand{\eafw}
{{\tiny\begin{array}{cc}{\mathcal E} & \! \! {\mathcal A} \\[-1.5pt]   {\mathcal F} & \! \! {\mathcal W}\end{array}}}
\newcommand{\decode}{\mbox{\scalebox{1.2}{$\vartriangleright$}}}
\newcommand{\code}{\mbox{\scalebox{1.2}{$\vartriangleleft$}}}
\newcommand{\dc}{{}}
\newcommand{\ud}{{}}
\newcommand{\app}{\mbox{\scalebox{1.1}{$\vartriangleright$}\hspace{-0.6em}$|$\hspace{0.6em}}}
\newcommand{\lam}{\mbox{\scalebox{1.1}{$\vartriangleleft$}\hspace{-0.4em}$|$\hspace{0.4em}}}
\begin{document}

\title{On strict extensional reflexivity \\ in compact closed categories}
\author{Peter Hines}

\maketitle

\abstract{ {\small This article has two related aims.  The first is to study the categorical setting of Abramsky, Haghverdi, \& Scott's untyped linear combinatory algebras \cite{AHS}, and the second is to relate this to the much more recent work by Abramsky \& Heunen on Frobenius algebras in the infinitary setting \cite{AH}.  
The key to this is (extensional) reflexivity (i.e. the property of an object being isomorphic to its own internal hom $R \cong [R\rightarrow R]$).  We first characterise extensionally reflexive objects in compact closed categories, then consider when \& how this property may be `strictified' -- how we may give a monoidally equivalent category where the isomorphisms exhibiting reflexivity are in fact identity arrows. This results in small two-object compact closed categories consisting of a unit object and a single (non-unit) strictly reflexive object. \\
We then move on to studying the endomorphism monoids of such objects from an algebraic rather than logical or categorical viewpoint. We demonstrate that these  necessarily contain an interesting inverse monoid that may be thought of as Richard Thompson's iconic group $\mathcal F$ together with the equally iconic bicyclic monoid $\mathcal B$ of semigroup theory, with non-trivial interactions between the two derived from the Frobenius algebra identity -- and claim this as a particularly significant example of the (unitless) Frobenius algebras of \cite{AH}.
\\
We first develop the theory from a purely theoretical point of view, then move on to develop concrete examples, based on the algebra and category theory behind \cite{GOI1,GOI2,AHS}. The concrete examples we give are based on the traced monoidal category of partial injections, and reflexive objects in the compact closed category that results from applying the ${\bf Int}$ or ${\bf GoI}$ construction.  We then give compact closed categories, monoidally equivalent to compact closed subcategories of $\bf Int(pInj)$, where this reflexivity is exhibited by identity arrows, and show how the above algebraic structures (Thompson's $\mathcal F$, the bicyclic monoid, and Frobenius algebras) arise in a fundamental manner. } 
}

\begin{center}{\bf \em This work is of course !( ) 
dedicated to Samson Abramsky.}
\end{center}

\section{Historical background}
The starting point for this chapter is Girard's Geometry of Interaction program   -- in particular the first two parts.  It is by now well-established that compact closed categories model key aspects of this (see \cite{AHS,HS05} for a good account), and this observation motivated the name of Abramsky's categorical $\bf GoI$ construction \cite{SA96} (see Section \ref{IntGoI-sect}).

However, this immediately poses an interesting puzzle.  In \cite{GOI1}, Girard makes the rather cryptic comment  that his system `forgets types', even though the stated aim was to produce a model of the polymorphically typed System $\mathcal F$, rather than a purely  untyped system. The explanation seems to be that the GoI system moves from a rigidly typed system to an entirely untyped system, in order to build a more flexible (polymorphic) type system on top of this\footnote{This is commonly studied via has become known as Hyland-Ong types  \cite{HO}. E. Haghverdi \& P. Scott also presented a Geometry of Interaction system that is decidedly typed in \cite{HS05}; however, this current chapter concentrates on the purely untyped aspects of the Geometry of Interaction program, and (re-)introducing types would be a non-trivial subsequent step.}. 
The claim that there is an untyped logical system at the core of Girard's GOI  was borne out in  Abramsky, Haghverdi, and Scott's paper \cite{AHS} that gave  an untyped combinatory logic (precisely, linear-exponential combinatory logic) based on the primitives from Girard's first two Geometry of Interaction papers \cite{GOI1,GOI2}. 

It is natural to wish to study this type-freeness via the `objects as types' paradigm of categorical logic \cite{LS}.  In the classical / Cartesian world, Lambek \& Scott introduced, as models of untyped lambda calculus, the $C$-monoids of \cite{LS}, which may reasonably be viewed as monoids satisfying all of the axioms for Cartesian closure apart from the existence of a terminal object\footnote{The complete situation is slightly more subtle than this description suggests. There is a good case that C-monoids do indeed account for units, but these are implicit  --  a passage to the Karoubi envelope then makes them explicit. This is beyond the scope of this paper, but covered in \cite{LS}.\label{cmonunits}}.

Based on similar ideas, analogues of 
compact closure within monoids were studied in \cite{PHD,TAC}  (again motivated by J.-Y. Girard's first three Geometry of Interaction papers \cite{GOI1,GOI2,GOI3}) and the claim was made that certain  monoids derived from the GOI program satisfy, `unitless analogues of compact closure'.   This work was carried out independently of \cite{AHS}; however, the underlying algebraic structures -- based on Girard's original system -- are identical. 

There are significant subtleties associated with this claim\footnote{To complicate matters,  the axiomatisation in \cite{TAC} work was intended \& stated to be equivalent to that of \cite{PHD}. Unfortunately, as recently pointed out by C. Heunen, several severe typo.s made their way into the final published version -- this axiomatisation, at least, unambiguously does not capture compact closure!}.  With Cartesian closure, we are left with the essential concepts even without the terminal object.   The same -- at least for the usual axiomatisation -- does not apply to compact closure. 

\section{Compact closure}\label{CC-sect}
In \cite{KL}, the abstract 2-categorical definition of a compact closed category is shown to have a concrete characterisation in terms of the existence of a duality and distinguished arrows. It is by now standard to take this as fundamental. 

\begin{definition}\label{CC-def}
A symmetric monoidal category $(\C,\otimes,\sigma_{\_ ,\_},I)$ is {\bf compact closed} when it is equipped with : 
\begin{itemize}
\item  a {\bf dual} --  a contravariant monoidal functor  $(\_  )^* : \C^{op}\rightarrow \C$ satisfying $\left( ( \_ )^* \right)^* =Id_{\mathcal C}$ 
\item for all objects $A\in Ob(\mathcal C)$, distinguished {\bf unit} \& {\bf co-unit} arrows $\eta_A : I \rightarrow A \otimes A^*$ and $\epsilon_A : A^* \otimes A \rightarrow I$ that satisfy the {\bf yanking axiom} 
\[ (1_A\otimes \epsilon)(\eta\otimes 1_A)  \ =\ 1_A\  =\   (\epsilon_{A^*} \otimes 1_A)(1_A \otimes \eta_{A^*}) \]  
\end{itemize}
Using the usual diagrammatic conventions  from \cite{JS1,JS2}, the unit / co-unit arrows are drawn as `cups' and `caps'
\begin{center}
\scalebox{0.8}{\small
\xymatrix{
	A \ar@/_40pt/@{}[rr]	&														& 	A^*	&\ \ \ \ \  \ \ \ \ \ &	 				& 									&		\\
		&														&		&\ \ \ \ \  \ \ \ \ \ & A^*\ar@/^40pt/@{}[rr]		&									& A 
} 
}
\end{center}
giving the yanking axiom as 
\begin{center}
\scalebox{0.8}{\small
\xymatrix{
A		&													&						&													&	A						&												&					& 								&	A \\
A\ar[u]	&	A^*  \ar@/^35pt/@{}[l]									& 	A \ar@/_35pt/@{}[l]		& \hspace{4em}		\mbox{ \Large =}	\hspace{4em}			&							&	\hspace{3em}		\mbox{ \Large =}	\hspace{3em}	& A\ar@/^35pt/@{}[r]	& A^* \ar@/_35pt/@{}[r]				& A\ar[u]		 \\	
		& 														&	A\ar[u]				&									 				&	A\ar[uu]					&		 										& A\ar[u]				&								&		\\
} 
}
\end{center}
\end{definition}

\begin{remark}[Interpretations and Examples]
The neat diagrammatic representation of the unit and co-unit arrows, and yanking, readily lead to numerous distinct interpretations. One of the earliest and most natural was for the unit and co-unit arrows to be interpreted as the {\bf Axiom} and {\bf Cut} rules of various forms of linear logic 
\begin{center}
\begin{tabular}{ccc}
Axiom : \ {\Large $\frac{}{A \ \ \ A^\perp}$}
& \hspace{6em}
Cut : \ {\Large $\frac{A^\perp \ \ \ A}{}$}
\\
\end{tabular}
\end{center}
This is the basis for the close connection between compact closure and the Geometry of Interaction discussed throughout, and the interpretation of yanking as cut-elimination is intuitively natural. The concrete examples we develop in Section \ref{concrete-sect} onwards are based on Girard's GoI system, and derived from the Int or GoI construction described in Section \ref{IntGoI-sect}.

An alternative interpretation was found in \cite{MSCS1,TCS1}, where algebraic models of  dynamics of Turing machines were observed to be compact closed;  the unit and co-unit model the changes of direction of the read-write head of a Turing machine as it moves over the tape, and the dual corresponds to interchanging the role of `left' and `right' in the definition of the dynamics. 

A further interpretation may be found in the Categorical Quantum Mechanics program of Abramsky \& Coeke \cite{AC}, where the unit and co-unit respectively correspond to the production of a maximally entangled state, and a (post-selected) measurement that results in the observation of this maximally entangled state. 

Numerous other interpretations have been given -- in particular, the pregroups of J. Lambek \cite{JL} arise from dropping the requirement of symmetry (implicitly, considering non-commutative multiplicative linear logic) to give linguistic models with a distinctly logical flavour.  See \cite{WB,WB03,JL} for the original motivation from linear logic and the identification as `non-symmetric compact closure'. 
\end{remark}

\subsection{On the definition(s) of compact closed monoids}\label{ccmprobs-sect}
It is not immediate how a compact closed monoid should be defined, apart from in the highly degenerate case where the unique object is the unit object. In this setting, we recall the folklore that the subcategory of {\em any} monoidal category generated by its unit object is compact closed, with MacLane's distinguished unit arrows \& their inverses trivially satisfying the axioms for the unit / co-unit of compact closure.

With Cartesian closure, we are generally happy to accept that the existence of a terminal object is a relatively minor part of the definition (and indeed may even be hidden in the structure of C-monoids -- see Footnote \ref{cmonunits}), and consider that a Cartesian closed monoid is one satisfying all the other parts of the definition.  By contrast, erasing the unit  object from the definition of compact closure leaves us with nothing -- not even the duality (some authors, notably \cite{JSV}, define the duality in terms of the unit / co-unit arrows).

 The approach taken in \cite{PHD,TAC} was to give an axiomatisation of compact closure for semi-monoidal categories (i.e. categories satisfying all MacLane's axioms except for those relating to the unit object -- see Definition \ref{semi-mon-def}) that 
\begin{enumerate}
\item does not mention the unit, and 
\item is equivalent to to the usual definition in the presence of a unit.
\end{enumerate}
Regardless of whether the logical interpretation of working without a unit is desirable, a fundamental question needs to be asked : 

\begin{quotation}{\bf  How do we know this definition of {\em unitless compact closure} is the `correct' definition?  Is it sufficient for it to  reduce to the standard definition in the presence of a unit?}
\end{quotation}
Expanding on this question, we may  ask what the implications would be of two such axiomatisations of unitless compact closure that both reduce to the usual definition in the presence of a unit, but are provably inequivalent?   How should we decide between them?

Such an pair of provably distinct axiomatisations has recently been established in joint (currently unpublished) work by the author and C. Heunen.    The concrete examples  of \cite{PHD,TAC} satisfy both sets of axioms, and the defining arrows of these axiomatisations correspond in each case to distinct interesting logical, computational, or categorical  features that we would be unhappy to exclude in the general case; there is simply no reason to prefer one axiomatisation over the other, and every reason to consider structures satisfying both axiomatisations.

The existence of concrete examples implies these are compatible, so may follow from a single set of axioms.    However, this illustrates that reduction to the usual axioms in the presence of a unit is therefore not sufficient. What is also needed is some notion of completeness   --  that any other compatible axiom scheme that is equivalent to compact closure in the presence of a unit is a consequence of it.  

The axioms given in \cite{PHD} do not have such a `completeness' property and, at best, cannot be the whole story.  The project described above remains ongoing. 

\section{From closed monoids to reflexive objects}
Nothing in the above discussion precludes compact closed categories where all non-unit objects are isomorphic (concrete examples given in part 4. of Proposition \ref{pNatresults-prop}), or even (in a small category) identical -- giving two-object compact closed categories, with a single non-unit object: see Corollary \ref{twoobjects-corol} and the concrete examples of Definition \ref{G-def}.  However, it is also worthwhile to take a step back and again ask the obvious question,

\begin{quotation}{\bf  ``What was the purpose of Lambek \& Scott introducing Cartesian Closed Monoids?''}
\end{quotation}

The immediate answer of course is, `to model untyped lambda calculus'. Looking slightly deeper  \cite{LS} observes that Cartesian closure, combined with the fact that there is only one object, means that the unique object $c$ is necessarily isomorphic to its own function space $c^c$ (i.e. is a {\em reflexive object}), and this is the key \cite{DS80} to the unrestricted application \& abstraction. 

\begin{remark}[Untyped systems -- a change of emphasis] From a categorical logic `objects as types' perspective, it seems natural that an `untyped version of $\mathcal X$' should be modeled by an ``$\mathcal X$--monoid'', provided this can be defined in a satisfactory manner.  When the intention is to model the unrestricted application / abstraction of an untyped lambda calculus, from precisely the same perspective the notion of reflexivity is the desirable property.  These two concepts -- although closely related -- are not identical.
\end{remark}

We therefore take as fundamental the notion of objects that are `isomorphic to their own internal hom'. 
The most general setting in which reflexivity may be defined is that of the `non-monoidal closed categories' of M. Laplaza \cite{ML77} (see Remark \ref{nonmon-rem} for the justification for such a general setting).  These are defined simply as categories with an internal hom functor (i.e. without explicit reference to any monoidal tensor or adjunction between hom and tensor) that satisfies some fairly intricate coherence conditions. Laplaza's definition, of course, includes the more familiar monoidal closed categories as a special case.

The following is taken from \cite{DS80} (see also \cite{MH17}), although we make some necessary (see Remark \ref{strict-rem}) changes in terminology.
\begin{definition}
Let $(\C , [ \_ \rightarrow \_])$ be a closed category, in the sense of \cite{ML77} (this includes monoidal closed categories as a special case). 

An {\bf (extensionally) reflexive object}, or simply {\bf reflexive object} $R\in Ob(\mathcal C)$ is one that is isomorphic to its own internal hom, so $R\cong[R\rightarrow R]$.

Explicitly, reflexive objects are equipped with mutually inverse isomorphisms :
\begin{itemize}
\item The {\bf app} isomorphism $\app   : R\rightarrow  [R\rightarrow R]$
\item The {\bf lam} isomorphism $\lam  : [R\rightarrow R]\rightarrow  R$ 
\end{itemize}
satisfying  $\lam\app=1_R$ and $\app\lam=1_{[R\rightarrow R]}$. 

We say that $R$ is {\bf weakly} or {\bf intensionally reflexive} when $[R\rightarrow R]$ is a retract of $R$, so $\app\lam=1_{[R\rightarrow  R]}$, but $\lam\app=e^2=e$ is an idempotent of $\C(R,R)$.  Note that this breaks with convention somewhat, by allowing for extensional reflexivity to be a very special case of weak reflexivity.

Finally, we say a reflexive object is {\bf strictly  reflexive} when $\lam$ and $\app$ are identity arrows. Strictly reflexive objects are of course extensional.
\end{definition}
\begin{remark}\label{strict-rem}[A conflict of terminology]
It is more common (e.g. \cite{MH17,DS80}) to refer to (extensional) reflexivity as {\em ``strict reflexivity''}, and to what we call ``weak reflexivity'' simply as {\em ``reflexivity''}.   
This immediately sets us up for a strong and probably unavoidable conflict of notation;  we are interested in `strictifying' reflexivity in the sense of strictification within categorical coherence -- i.e. giving a suitable equivalence of categories under which the isomorphisms exhibiting (extensional) reflexivity become identity arrows. \\

The usage of the term ``strict'' is very well-established in both fields, so this conflict of notation is unavoidable -- all we can do is point out the conventions we use!
\end{remark}

\begin{remark}[Weak, strong, \& strict reflexivity in models of combinatory logic]\label{lcamodels-rem}
A key aim of this chapter is to study the categorical / algebraic setting for the untyped combinatory logic of Abramsky, Haghverdi \& Scott, which we claim is that of strict reflexivity.  In some sense, compact closed monoids would be too extravagant a setting. By contrast to the logical interpretation of \cite{GOI1,GOI2}, the system of \cite{AHS} has no connectives (\& hence no need for  tensors), and no negation  (\& hence no non-degenerate duality is required)\footnote{However, units do play a r\^ole in \cite{AHS}, so reflexivity within  a setting including these is essential.};  in a $\lambda$ calculus, the key notions are those of application \&  abstraction.   From that viewpoint, it is appropriate to concentrate on reflexivity.   

However, combinatory logics are `lower-level' systems than $\lambda$ calculii.  The notion of abstraction itself is derived from more primitive operations --  an operation that acts like lambda-abstraction is built up using combinators.  Our claim is that in models of untyped combinatory logic, we will not only observe reflexivity, but reflexivity that is {\em strict} --- the isomorphisms that exhibit reflexivity are identities.

Finally, although reflexivity already features heavily in \cite{AHS}, it is intensional, or weak, reflexivity.  Girard's original system (implicitly) considered both -- we are therefore `plugging a gap'. It is also difficult to see how reflexivity that is not exhibited by isomorphisms may be strictified, unless we pass to the Karoubi envelope (i.e. idempotent splitting). This will then provide reflexive objects that are extensionally reflexive --- the setting we consider.  
\end{remark}

\subsection{Reflexive objects of compact closed categories}
Reflexive objects of compact closed categories have a particularly simple characterisation.

\begin{definition}
Let $(\mathcal C , \_ \otimes \_,I)$ be a monoidal category. An object $N\in Ob({\mathcal C})$ is called {\bf self-similar} or {\bf pseudo-idempotent} when it satisfies $N \cong N\otimes N$. The isomorphisms exhibiting this self-similarity are   unique up to unique isomorphism \cite{JHRS}, and commonly referred to as the {\bf code} and {\bf decode} arrows $\code\in {\mathcal C}(N\otimes N,N)$ and $\decode \in \mathcal C (N, N \otimes N)$ respectively.  

Similarly, let $(\mathcal D, ( \_ )^*)$ be a category with a dual. An object $S\in Ob({\mathcal D})$ is called {\bf self-dual} when it satisfies $S \cong S^*$; there is no standard notation or terminology for arrows exhibiting self-duality.   
\end{definition}

Examples of self-similar objects are given in Section \ref{Nselfsim-sect}, and self-similar objects of compact closed categories in Lemma \ref{NNselfsim-lem}, Proposition \ref{pNatresults-prop}, and Definition \ref{G-def}.  A characterisation of self-dual objects in a large class of compact closed categories is given in Corollary \ref{selfdualdagger-corol}, with the example of Lemma \ref{NNselfsim-lem} being particularly relevant.

\begin{remark} The notions `pseudo-idempotency' and `self-similarity' are precisely equivalent; the different terminology simply arises from different fields. Some authors (e.g. \cite{FL}) use the term `idempotent' for what we call `pseudo-idempotent', although \cite{JK} defines an idempotent to satisfy the much stricter condition $\code \otimes I_N \ = \ 1_N \otimes \code$.   
For consistency, we use the term {\em self-similar object} rather than {\em pseudo-idempotent}, unless usage is very well-established.
\end{remark}
Self-duality and self-similarity together are enough to characterise reflexive objects of compact closed categories.

\begin{lemma}\label{reflexchar-lem} Let $({\mathcal C},\_ \otimes \_ , \sigma_{\_ ,\_}, I, ( \ )^*)$ be a compact closed category. The reflexive objects of $\mathcal C$  are precisely the self-dual self-similar objects.
\end{lemma}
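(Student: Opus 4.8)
The plan is to reduce everything to the standard fact (essentially the content of \cite{KL}) that a compact closed category is monoidal closed, with the internal hom given concretely by $[A\rightarrow B]\cong A^*\otimes B$; the adjunction $\mathcal C(A\otimes B,C)\cong \mathcal C(A,B^*\otimes C)$ is witnessed in the usual way by composing with the unit and co-unit arrows. Taking $A=B=R$, an object $R$ is therefore reflexive precisely when $R\cong R^*\otimes R$. The lemma then amounts to showing that this single isomorphism condition is equivalent to the conjunction of self-duality ($R\cong R^*$) and self-similarity ($R\cong R\otimes R$), which is a short isomorphism chase requiring no coherence data.

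For the easy direction, suppose $R$ is self-dual and self-similar. Then
\[ [R\rightarrow R]\ \cong\ R^*\otimes R\ \cong\ R\otimes R\ \cong\ R, \]
using self-duality on the first tensor factor and then self-similarity; so $R$ is reflexive. For the converse, suppose $R\cong R^*\otimes R$. First apply the dual, a contravariant monoidal functor with $((\ )^*)^*=Id_{\mathcal C}$, which together with symmetry gives $(A\otimes B)^*\cong A^*\otimes B^*$. Hence
\[ R^*\ \cong\ (R^*\otimes R)^*\ \cong\ R^{**}\otimes R^*\ =\ R\otimes R^*\ \cong\ R^*\otimes R\ \cong\ R, \]
so $R$ is self-dual. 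Feeding this back in, $R\otimes R\cong R^*\otimes R\cong R$, so $R$ is also self-similar. Combining the two implications yields the claim, and (by the uniqueness of code/decode arrows \cite{JHRS}) the witnessing isomorphisms $\code,\decode$ and the self-duality isomorphism assemble, via the above chain, into the isomorphisms $\app,\lam$.

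I do not expect a genuine obstacle here; the only point needing a little care is the bookkeeping around the identification $[A\rightarrow B]\cong A^*\otimes B$ --- specifically, checking that the internal hom of a Laplaza closed category \cite{ML77}, used in the definition of reflexive object, does coincide for a compact closed category with the functor $A^*\otimes(\ )$ that is right adjoint to $A\otimes(\ )$ --- together with the (standard, symmetry-assisted) computation of the dual of a tensor product. Once these are in place, both directions of the equivalence are immediate from the displayed chains of isomorphisms above.
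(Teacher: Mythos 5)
Your proposal is correct and follows essentially the same route as the paper: both directions rest on the identification $[A\rightarrow B]=A^*\otimes B$, with the forward implication obtained by applying the contravariant monoidal dual (plus symmetry) to the reflexivity isomorphism to extract self-duality, and self-similarity then following from $R\cong R^*\otimes R\cong R\otimes R$. The only cosmetic difference is that the paper phrases the self-duality step as an explicit composite of $\app$, $\sigma_{R^*,R}$, and $\app^*$ rather than as your chain $R^*\cong(R^*\otimes R)^*\cong R\otimes R^*\cong R$, which is the same computation.
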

\begin{proof} \ Both parts of this proof are based on the very special form the internal hom takes in a compact closed category, as $[A\rightarrow B] \stackrel{def.}{=}A^*\otimes B$.   \\
$(\Rightarrow )$ Let $R\in Ob(\mathcal C)$ be a reflexive object, so there exist mutually inverse arrows 
$\app\in {\mathcal C}(R, [R\rightarrow R])$ and $\lam\in {\mathcal C}([R\rightarrow R], R)$.
As the dual is a contravariant monoidal functor, the following diagram commutes :
\[ 
\xymatrix{
R^*  & & [R\rightarrow R]^* \ar@{-}[d]^{1_{[R\rightarrow R]^*}} \ar[ll]_{\app^*} \\
R^* \ar@{-}[u]^{1_R^*}  \ar[rr]_{\lam^*} & & [R\rightarrow R]^* 
}
\]
Appealing to the identity $[R\rightarrow R] = R^*\otimes R$, we then derive isomorphisms exhibiting the self-duality of $R$.
\[ 
\xymatrix{
R  \ar[rr]^\app & & R^*\otimes  R \ar[rr]^{\sigma_{R^*,R}} & & R \otimes R^* \ar[rr]^{\app^*} & & R^*  \ar@{-}[d]^{1_{R^*}} \\
R  \ar@{-}[u]^{1_R}& & R^*\otimes  R \ar[ll]^{\lam}  & & R \otimes R^* \ar[ll]^{\sigma_{R,R^*}}  & & R^* \ar[ll]^{\lam^*} \\
}
\]
Once we have established that $R\cong R^*$ is self-dual, self-similarity or pseudo-idempotency of R follows as $ R \cong [R\rightarrow R] = R^*\otimes R \cong R\otimes R$. 
Thus reflexive objects are self-dual and self-similar.\\
$(\Leftarrow)$ This direction is even simpler. Let $R$ be both self-dual and self-similar. Then 
$R \ \cong \ R\otimes R \ \cong \ R^*\otimes R = [R\rightarrow R]$ and so $R$ is reflexive.
\end{proof}

\begin{remark}[Examples of self-similar and reflexive objects]
It is not difficult to give examples of either self-similar or reflexive objects.  From a general viewpoint, self-similarity dates back at least as far as Hilbert's parable of the Grand Hotel and Cantor's work on foundations of set theory (see \cite{NY13} for a good overview), and is significantly prefigured by Galileo's `infinity paradox'.  As observed in \cite{PHD} it is also a key feature of Girard's Geometry of Interaction program, and the term `self-similar' is used in a wide range of algebraic fields (e.g. \cite{ME}) motivated by a connection with fractal structures.  

Relevant concrete examples, along with Hilbert-hotel style bijections based on the natural numbers, are given in Lemma \ref{Nselfsim-sect} and Definitions \ref{Cantor-def}.

 For any self-similar object $X$ of a compact closed category, its tensor with its dual, $X\otimes X^*$  is not only self-similar, but self-dual.  Thus, provided a compact closed category does indeed contain self-similar objects, we have instant access to reflexive objects\footnote{It is perhaps easier to point out  compact closed categories that {\em do not} have reflexive objects. The compact closed category $({\bf Hilb_{fd}},\otimes)$ of finite-dimensional Hilbert spaces  with tensor product, as studied in the categorical quantum mechanics program, is a notable example. We refer to \cite{ABP} for the obstacles to defining compact closure in the infinite-dimensional (\& hence reflexive) case. It is tempting, although highly speculative, to relate this to the problems early pioneers of quantum computing had in attempting to define a `fully quantum' universal computer \cite{DD,JM}.}\label{hilbCC-foot}.
\end{remark}

Concrete examples of reflexive objects are given in Corollary \ref{NNreflex-corol}. However, at this point we cannot simply declare that we are done, and have established a suitable categorical setting for the type-freeness evident in \cite{GOI1,GOI2,AHS}. It is notable that all the action takes place within a single algebraic structure.  Instead, we treat reflexivity like many other categorical properties and consider how reflexivity may be `strictified' so that the isomorphisms exhibiting it become, under a suitable equivalence of categories, identity arrows.

\section{Strictification of reflexivity}
What we now need is a notion of `strictifying' reflexivity; moving -- via a categorical equivalence -- from a setting in which an object is reflexive is up to some  pair of isomorphisms, to a setting in which reflexivity may be exhibited by identity arrows.

\begin{definition}\label{strictref-def}
Given a reflexive object $R$ of a closed category $(\C,[\ \rightarrow \ ])$, we define a {\bf strictification} of the reflexivity of $R$ to be :
\begin{itemize}
\item a small closed category $\mathcal S$ and a faithful functor  of closed categories $\Gamma : \mathcal S \rightarrow \C$, 
\item a reflexive object $N\in Ob(\mathcal S)$ where $\Gamma(N)=R$,
\item a closed category $\mathcal D$, equivalent to $\mathcal S$,  in which this reflexivity is exhibited by identity arrows.
\end{itemize}
The above slightly convoluted description is required to avoid violating the principle of equivalence; in practice, we will simply refer to `a small closed subcategory of $\C$ that contains $R$', along with an equivalent closed subcategory where reflexivity is exhibited by identities. 

When $R$ is a reflexive object of a {\em monoidal} closed category $(\C , \otimes, [\_ \rightarrow \_], I )$we say that such a strictification is a {\bf monoidal strictification} when the relevant equivalence of categories is a monoidal equivalence, and the faithful functor $\Gamma : S\rightarrow \C$ is a monoidal functor. 
\end{definition} 

\begin{remark}\label{nonmon-rem}[Closure, and monoidal closure]
	Closure within the `non-monoidal closed categories' found in the somewhat obscure paper \cite{ML77} of M. Laplaza is undoubtedly the most `pure' form of closure available.  Additional categorical features associated with closure (such as the Cartesian product used in \cite{LS}) often correspond to additional logical or computational structures (in \cite{LS}, the product re-appears in logcial form, as `surjective pairing'), and a notion of closure defined without reference to any other operations beyond an internal hom. does not impose any additional notions on a logical or lambda-calculus interpretation.
	
	Despite this, we consider (monoidal) strictification of reflexivity within compact closed categories. Thus, we require an equivalence of categories preserves the unit object, elements of objects and names of arrows, and indeed the notion of monoidal well-pointedness.  
	
	This is partly for theoretical reasons; we are studying the setting of \cite{AHS,GOI1,GOI2}, rather than some more abstract notion of combinatory logic or lambda model. It is also partly for practical reasons; \cite{ML77} requires what have been referred to as `monstrous' coherence conditions, which would make any such strictification a decidedly non-trivial task -- we refer to \cite{MH14} for what is possibly the closest approach.
\end{remark}

We are therefore considering what is possibly the easiest case : monoidal strictification for a class of reflexive objects in compact closed categories, including those used in \cite{GOI1,GOI2,AHS}.  

\section{Strict duality, and strict self-similarity}
As the notion of reflexivity in compact closed categories simply splits into the notions of self-duality and self-similarity, we first consider what it means to have strict versions of both of these.

\subsection{Strict self-duality}
The notion of self-duality in the strict setting is well-studied :
\begin{definition}
A {\bf dagger} on a (monoidal) category $\C$ is defined in \cite{PS} to be a contravariant (monoidal)  functor $(\ )^\dagger :\C^{op}\rightarrow \C$ that is the identity on objects and satisfies 
$\left( (\ )^\dagger \right)^\dagger=Id_\C$.  Thus, a dagger can be thought of as a strict version of self-duality.  Simply as notation, we extend this definition slightly, and say that a dual $( \ )^*:\C^{op}\rightarrow \C$ is a {\bf dagger at an object $X\in Ob(\C)$} when $X$ is strictly self-dual. This is precisely equivalent to stating that the monoidal subcategory generated by $X$ is a dagger category.
\end{definition}

\begin{remark}
The best-known examples of daggers on compact closed categories are undoubtedly those found in the `categorical quantum mechanics' program \cite{AC}, where the dual in the compact closed category of finite-dimensional Hilbert spaces is a dagger.  However, we are not able to use this setting to provide reflexive objects (see footnote 5).    	
	
The notion of a dagger has occasionally been criticised as an `evil' concept (i.e. breaking the principle of equivalence by relying on a notion of equality between objects). Leaving aside the possibility of the purist leveling the same criticism at the identity functor, we take an alternative viewpoint : self-duality is simply a categorical property for which we may consider -- in the appropriate setting -- a strict form.
\end{remark}

The following is then a simple corollary of Lemma \ref{reflexchar-lem} above : 
\begin{corollary}\label{applamSS-corol}
Let $N$ be a reflexive strictly self-dual object of a compact closed category $(\C,\otimes,( \ )^*)$,  so $( \ )^*$ is a dagger at $N$. Then $N\in Ob(\C)$ is self-similar, and  the $lam$ and $app$ arrows exhibiting reflexivity 
\[ \lam  : [N\rightarrow N]\rightarrow  N \ \mbox{ and } \  \app   : N\rightarrow  [N\rightarrow N]  \]  
are also code / decode arrows exhibiting self-similarity
\[ \code : N \otimes N \rightarrow N \ \mbox{ and } \ \decode : N\otimes N \rightarrow N \]
\end{corollary}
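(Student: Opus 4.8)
Lemma~\ref{reflexchar-lem} already tells us that a reflexive object is self-similar, so the only content of the corollary is the sharper claim that the \emph{particular} isomorphisms $\app,\lam$ exhibiting reflexivity can be taken \emph{verbatim} as code/decode arrows. The plan is to read this off from the explicit form of the internal hom, exactly as in the proof of Lemma~\ref{reflexchar-lem}, combined with strictness of the duality.

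\textbf{Step 1 (identify the objects).} In any compact closed category the internal hom is $[A\rightarrow B] = A^*\otimes B$, so $[N\rightarrow N] = N^*\otimes N$. The hypothesis that $(\ )^*$ is a dagger at $N$ means precisely that $N$ is \emph{strictly} self-dual: $N^* = N$ as an equality of objects, not merely an isomorphism. Substituting gives the on-the-nose identity $[N\rightarrow N] = N^*\otimes N = N\otimes N$.

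\textbf{Step 2 (reinterpret the arrows).} Reflexivity of $N$ supplies mutually inverse isomorphisms $\app : N\rightarrow [N\rightarrow N]$ and $\lam : [N\rightarrow N]\rightarrow N$ with $\lam\app=1_N$ and $\app\lam=1_{[N\rightarrow N]}$. Read through the identification of Step~1, these are mutually inverse isomorphisms $\app : N\rightarrow N\otimes N$ and $\lam : N\otimes N\rightarrow N$, so setting $\decode := \app$ and $\code := \lam$ yields a code/decode pair for the self-similarity $N\cong N\otimes N$. By uniqueness up to unique isomorphism of code/decode arrows \cite{JHRS}, this choice is canonical: any other such pair differs from $(\lam,\app)$ by a unique automorphism of $N$.

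\textbf{The point requiring care.} There is no genuine obstacle here --- this is a true corollary --- but it is worth flagging why strictness is doing real work. The statement insists that the \emph{same} arrows play both roles; with only an isomorphism $u : N\rightarrow N^*$ available one would instead get $[N\rightarrow N] = N^*\otimes N \cong N\otimes N$ via $u^{-1}\otimes 1_N$, and the decode arrow would be the composite $(u^{-1}\otimes 1_N)\app$ rather than $\app$ itself. The dagger-at-$N$ hypothesis is exactly what makes $u$ an identity and removes this discrepancy, which is why strict self-duality is the correct partner for strict self-similarity in the strictification programme that follows.
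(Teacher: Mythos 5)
Your argument is exactly the paper's: substitute $[N\rightarrow N]=N^*\otimes N$, use the dagger at $N$ to get the on-the-nose equality $N^*\otimes N=N\otimes N$, reinterpret $\app$ and $\lam$ as a decode/code pair, and invoke uniqueness up to unique isomorphism from \cite{JHRS}. Correct, and no meaningful difference in route; your closing remark on why strictness (rather than mere self-duality) is needed is a fair gloss but not an additional step.
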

\begin{proof}
The internal hom in a compact closed category is given by $[N \rightarrow N] \ = \ N^*\otimes N$. As the dual is a dagger at $N$, we derive $[N \rightarrow N] = N \otimes N$. The $lam$ and $app$ isomorphisms are then mutually inverse bijections exhibiting $N\cong N\otimes N$.  However, these are unique up to unique isomorphism \cite{JHRS}, so our result follows.
\end{proof}

\begin{corollary}\label{twoobjects-corol} Let $N$ be a reflexive strictly self-dual object of a compact closed category. Then 
\begin{enumerate}
\item  The monoidal subcategory generated by $N$ is compact closed,
\item All non-unit objects of this subcategory are isomorphic,
\item When $N$ is strictly reflexive, this subcategory has precisely two objects, one of which is the unit object.
\end{enumerate}
\end{corollary}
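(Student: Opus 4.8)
The plan is to derive all three claims from Corollary \ref{applamSS-corol}, which already tells us that a reflexive strictly self-dual object $N$ is self-similar, with the $\lam$ and $\app$ isomorphisms serving simultaneously as the $\code$ and $\decode$ arrows witnessing $N \cong N\otimes N$. For part 1, I would first pin down the monoidal subcategory $\langle N\rangle$ generated by $N$: its objects are the tensor powers $I, N, N\otimes N, N^{\otimes 3},\dots$ (with $I$ the empty tensor), and its arrows are those generated from the structural isomorphisms (associators, unitors, the symmetry $\sigma$) together with $\eta_N$, $\epsilon_N$ under $\otimes$ and composition. To see $\langle N\rangle$ is compact closed it suffices to check it inherits the compact-closed data of $\C$. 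Since $(\ )^*$ is a dagger at $N$, i.e. $N^{*}=N$, and $(\ )^{*}$ is a contravariant monoidal functor, the dual of any tensor power of $N$ is again a tensor power of $N$ (literally, or up to the structural isomorphisms which already lie in $\langle N\rangle$), so $\langle N\rangle$ is closed under $(\ )^{*}$; the unit and co-unit arrows $\eta_{N^{\otimes k}}$, $\epsilon_{N^{\otimes k}}$ at each object are the usual composites of copies of $\eta_N,\epsilon_N$ with structural isomorphisms and hence belong to $\langle N\rangle$; and the yanking axiom holds because it holds in $\C$.

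For part 2, self-similarity gives $N\cong N\otimes N$, and a one-line induction, using $N^{\otimes(k+1)} = N^{\otimes k}\otimes N \cong N\otimes N \cong N$, yields $N^{\otimes k}\cong N$ for every $k\geq 1$. Since the non-unit objects of $\langle N\rangle$ are exactly these tensor powers, they are all mutually isomorphic (via structural maps and $\code,\decode$, which are present in $\langle N\rangle$).

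For part 3, strict reflexivity means $\app$ and $\lam$ are identity arrows; by Corollary \ref{applamSS-corol} they are precisely the $\code : N\otimes N \to N$ and $\decode : N \to N\otimes N$ arrows, so $\code$ is an identity, which forces $N\otimes N = N$ on the nose. Hence every tensor power of $N$ equals $N$, and together with $I=N^{\otimes 0}$ the object set of $\langle N\rangle$ is $\{I, N\}$. Provided $N\neq I$ — the non-degenerate case of interest, the alternative being the one-object category on the unit recalled in Section \ref{ccmprobs-sect} — this subcategory has precisely two objects, one of which is the unit.

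The step needing the most care is part 1, and it is really coherence bookkeeping rather than a genuine obstacle: making precise what ``the monoidal subcategory generated by $N$'' is, and verifying it genuinely contains all the data witnessing compact closure — that the duals of its objects, and all the $\eta$ and $\epsilon$ arrows at tensor powers of $N$, stay inside it. This amounts to tracking how $(A\otimes B)^{*}$ relates to $A^{*}\otimes B^{*}$ and deciding whether one works strictly or up to the structural isomorphisms; either way the argument goes through. It is also worth explicitly flagging the harmless $N\neq I$ hypothesis in part 3, since $I$ itself is (trivially, strictly) reflexive and would give a single-object subcategory.
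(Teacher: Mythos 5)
Your argument is correct and follows exactly the route the paper intends: the corollary is stated there without proof, as an immediate consequence of Corollary \ref{applamSS-corol} (the $\lam$/$\app$ isomorphisms doubling as code/decode arrows for $N\cong N\otimes N$) together with the evident induction $N^{\otimes(k+1)}\cong N\otimes N\cong N$ and, for part 3, the observation that identity $\app$/$\lam$ force $N\otimes N=N$ on the nose. Your explicit flagging of the degenerate case $N=I$ in part 3 is a sensible addition that the paper leaves implicit.
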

(Concrete examples of 1. and 2. are given in Part 4. of Proposition \ref{pNatresults-prop}, and of Part 3. in Definition \ref{G-def}). 

This convergence of the elements of reflexivity is particularly relevant for compact closed categories arising from the $\bf Int$ or $\bf GoI$ construction, where all self-dual objects are by construction isomorphic to some strictly self-dual object.

\subsection{Strict self-similarity}\label{strictselfsim-sect}
It is now inevitable that we consider how self-similarity may be strictified.  We are fortunate to be able to rely on a pre-existing coherence theorem and strictification procedure \cite{JHRS}.  However, somewhat inconveniently for our aim of finding a {\em monoidal} strictification of reflexivity,  this firmly and unavoidably lives within the theory of {\em semi-monoidal}  categories. 

\subsection{Monoidal and semi-monoidal categories} 
A semi-monoidal category is simply one that satisfies all MacLane's axioms for a monoidal category, except for those concerning the unit object.  The following definitions may be found in \cite{K}.
\begin{definition}\label{semi-mon-def}
A {\bf semi-monoidal category}  is a category $\C$ with a functor $\_\otimes \_:\C\times \C\rightarrow \C$ that is associative up to an object-indexed family of natural isomorphisms $\tau_{X,Y,Z}:X\otimes (Y\otimes Z)\rightarrow (X\otimes Y)\otimes Z$ satisfying MacLane's pentagon condition
\[  ( \  \tau_{W,X,Y}\otimes 1_Z)  \  \tau_{W,X\otimes Y,Z} \  (1_W \otimes  \  \tau_{X,Y,Z}) \ = \   \  \tau_{W\otimes X,Y,Z}  \  \tau_{W,X,Y\otimes Z} \] 
A functor between semi-monoidal categories that (strictly) preserves the tensor is a {\bf (strict) semi-monoidal functor}.  We assume the obvious definition of {\bf semi-monoidal equivalence of categories}.

A {\bf semi-monoidal monoid} is simply a small semi-monoidal category with precisely one object.   These lie on the border of algebra and category theory, and interesting algebraic structures frequently arise from core categorical ones in this setting (e.g. Section \ref{F-sect}).

\end{definition}

What is lost in the passage from monoidal to semi-monoidal categories is the following notions:

\begin{definition}
Let $(\C,\otimes ,I)$ be a monoidal category. The {\bf elements} of $X$ are members of the homset $\C(I,X)$, and the elements of the unit object $\C(I,I)$ are known as {\bf abstract scalars} \cite{SAAS}, by analogy with the case of vector spaces and linear maps.  

When the abelian monoid of abstract scalars is the singleton, we say that $I$ is a {\bf trivial unit}, or that $(\C,\otimes, I)$ has {\bf trivial scalars}.

When $\C$ is a monoidal closed category, the elements of $[X\rightarrow Y]$ are known as {\bf names} and are in 1:1 correspondence with the members of the homset $\C(X,Y)$. Finally, when $(\C,\otimes,I)$ is {\em compact closed}, there is also a 1:1 correspondence between $\C(X,Y)$ and the elements $\C([X\rightarrow Y],I)$; these are known as {\bf co-names}.  
\end{definition}

We treat monoidal categories as a special class of semi-monoidal categories.  When discussing potential unit objects in semi-monoidal categories, it is common to rely on A. Saavedra's characterisation\footnote{It is notable that Saavedra never explicitly stated that this precisely characterised units in the sense of MacLane-Kelly. This observation was made by J. Kock who also laid out the basics of the theory of semi-monoidal categories \cite{K}. He later extended this to a more general theory of `weak units' in collaboration with A. Joyal \cite{JK}.} of units \cite{AS}, as laid out in \cite{K,JK}. Thus `being a unit' is a property that an object may have, rather than a part of the definition.

\begin{definition}\label{Sunit-def}
A {\bf (Saavedra) unit} in a semi-monoidal category $(\C,\otimes , \tau)$ is an object $U\in Ob(\mathcal C)$ that is both  {\bf pseudo-idempotent} and {\bf cancellable} i.e. it is self-similar, and the functors $(U\otimes \_ ), (\_ \otimes U) : \C \rightarrow \C$ are fully faithful.
\end{definition}  

The theory of Saavedra units is particularly relevant for semi-monoidal monoids, where it allows us to characterise those semi-monoidal monoids whose unique object is a unit object. The following is taken from \cite{JHRS}.

\begin{proposition}\label{NSS-prop}
Let $(M,\star,\alpha)$ be a semi-monoidal monoid. Then $\star$ is strictly associative (i.e. $\alpha=1_M$) iff the unique object of $M$ is a unit object.
\end{proposition}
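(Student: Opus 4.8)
The plan is to prove both implications of Proposition \ref{NSS-prop}, keeping in mind that a semi-monoidal monoid $(M,\star,\alpha)$ is, concretely, a monoid $M$ (the endomorphisms of the unique object $\bullet$) together with a distinguished invertible element $\alpha \in M$ — the component $\tau_{\bullet,\bullet,\bullet}$ of the associator — subject to the pentagon equation, which for a one-object category collapses to a single identity in $M$, namely $(\alpha \star 1)\,\alpha\,(1\star\alpha) = \alpha\,\alpha$ where $\star$ here denotes the monoidal product of endomorphisms. Note that the monoidal product of two elements $f,g$ is a new element $f\star g$, and there is also the categorical composition $fg$; these are linked by the interchange/bifunctoriality law $(f\star g)(f'\star g') = (ff')\star(gg')$. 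Since there is only one object, $\bullet \otimes \bullet = \bullet$ automatically, so $\bullet$ is trivially self-similar (pseudo-idempotent) with $\code = \decode = 1_\bullet$; hence by Definition \ref{Sunit-def}, $\bullet$ is a Saavedra unit precisely when the two functors $(\bullet \otimes \_)$ and $(\_ \otimes \bullet)$ are fully faithful, i.e. when the monoid maps $f \mapsto 1_\bullet \star f$ and $f \mapsto f \star 1_\bullet$ are bijections of $M$.

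First I would dispatch the easy direction: if $\alpha = 1_M$, then strict associativity makes $\star$ an honest associative operation, and $f \mapsto 1 \star f$ is a monoid endomorphism; I would exhibit its inverse directly, or simply invoke the standard Eckmann–Hilton-style argument that in a strict semi-monoidal monoid the maps $L(f) = 1\star f$ and $R(f) = f\star 1$ are mutually related to the identity via the interchange law — concretely $(1\star f)(g \star 1) = g \star f = (g\star 1)(1\star f)$, and with $\alpha=1$ one checks $1\star 1 = 1$, from which fully-faithfulness of both translation functors follows by an elementary computation. This is the routine half and I would not belabor it.

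The substance is the converse: assuming $\bullet$ is a unit, i.e. $L$ and $R$ are bijections, deduce $\alpha = 1_M$. Here the plan is to extract, from the pentagon identity together with bifunctoriality and the invertibility of $\alpha$, enough equations to pin $\alpha$ down. I expect to proceed by showing first that $1\star 1 = 1_M$: the element $e = 1\star 1$ is idempotent for composition (by interchange), and since $L(f) = 1 \star f = (1\star 1)(1\star f)\cdot(\text{something})$... more carefully, $L$ being a bijective function on the underlying set that also satisfies $L(fg)$-type relations forces $L(1) = 1$, giving $1\star 1 = 1$. Then I would feed this back into the pentagon equation: with all the outer tensor factors eventually reduced using $1\star 1 = 1$ and naturality of $\tau$ (which, spelled out for the one-object case, says $\alpha$ commutes appropriately with the images of $L$ and $R$), the pentagon collapses to an equation of the form $\alpha \cdot X = \alpha \cdot Y$ or $\alpha^2 = \alpha \cdot(\text{unit-ish term})$, and cancelling the invertible $\alpha$ yields $\alpha = 1_M$. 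The cleanest route is probably to quote the coherence/strictification machinery of \cite{JHRS} directly — it is stated there that this is exactly the characterization — but since the excerpt attributes the statement to that reference, I would instead reconstruct the short argument so the paper is self-contained.

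The main obstacle I anticipate is bookkeeping: translating the diagrammatic naturality squares for $\tau_{X,Y,Z}$ and the pentagon into element-level equations in $M$ without losing track of which juxtapositions are $\star$ and which are categorical composition, and making sure the naturality of the associator (not just the pentagon) is actually used — it is, since pentagon alone does not force strictness in general; one genuinely needs that $\alpha$ is a \emph{natural} isomorphism, i.e. that it intertwines the translation functors, which is where the hypothesis that those functors are fully faithful does real work. Once the equations are laid out correctly, the cancellation of the invertible element $\alpha$ is immediate, so the risk is entirely in setting up the two or three relevant instances of naturality and pentagon and simplifying them using $1\star 1 = 1_M$ in the right order.
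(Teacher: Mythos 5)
Your division of labour is inverted, and the half you dismiss as routine is where the proof breaks. The implication you call easy --- $\alpha=1_M$ implies the unique object is a unit --- is precisely the substantive ``no simultaneous strictification'' content that the paper attributes to \cite{JHRS}, and the ``elementary computation'' you claim deduces fully-faithfulness from the interchange law together with $1\star 1=1$ does not exist. (Note first that $1\star 1=1_M$ is automatic from functoriality of $\otimes$ --- identities go to identities --- and has nothing to do with $\alpha$.) To see that those ingredients cannot suffice: take any monoid $M$ with more than one element and define $f\star g=1_M$ for all $f,g$, with $\alpha=1_M$. This satisfies preservation of identities and of composition, interchange, naturality of the associator, and the pentagon (every instance of each reduces to $1=1$), so it is a strictly associative semi-monoidal monoid in the sense of Definition \ref{semi-mon-def}; yet $f\mapsto 1\star f$ is the constant map, so the object is not cancellable and hence not a Saavedra unit. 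Interchange, strict associativity and $1\star 1=1$ therefore do not imply fully-faithfulness, and Eckmann--Hilton does not rescue the argument, since it requires $\star$ to possess its own two-sided unit \emph{element}, which is essentially the conclusion you are trying to reach. A correct proof of this direction must isolate and use whatever nondegeneracy \cite{JHRS} carries with it (for instance that $\star$ arises as $\code(\_\otimes\_)\decode$ from a self-similar object whose translation functors in the ambient category are already faithful); as written, your argument is circular or simply absent at the decisive point.

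The converse --- unit implies $\alpha=1_M$ --- is sketched more plausibly, and you are right that naturality and cancellability must do real work. But the mechanism is not ``the pentagon collapses and one cancels the invertible $\alpha$''; it is the standard unit-coherence argument. Cancellability yields left and right unit constraints $\lambda,\rho\in M$ (Kock's construction from a Saavedra unit), naturality gives $1\star f=\lambda^{-1}f\lambda$ and $f\star 1=\rho^{-1}f\rho$, one proves $\lambda=\rho$ at the unit object, and the Kelly identity $(\lambda\star 1)\cdot\alpha=1\star\lambda$ (derived from the pentagon together with the triangle) then reads $\lambda\alpha=\lambda$, whence $\alpha=1_M$. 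If the paper is to be self-contained here, write out exactly these steps rather than the schematic $\alpha\cdot X=\alpha\cdot Y$; the paper itself offers no proof beyond the citation of \cite{JHRS}, so nothing in the text will fill these gaps for you.
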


\subsection{An algebraic interlude}\label{F-sect}
Proposition \ref{NSS-prop} raises an obvious question : the canonical associativity isomorphisms for a (non-degenerate) semi-monoidal monoid  form a non-trivial group ---  is it the same in every case, and if so, which group this is?

One of the most familiar and well-studied objects in group theory is Richard Thompson's group $\mathcal F$. This was originally defined in terms of a representation,  as the group of homeomorphisms of the unit interval that are piece-wise linear and order-preserving, non-differentiable only at a finite number of dyadic rationals, and  have slope of the form $2^k$, $k\in \mathbb N$ on differentiable sections. 

We use the definition as a group presentation : 
\begin{definition}\label{F-def}
	{\bf Thompson's group $\mathcal F$} is defined by  
	\[ \mathcal F = \langle x_0,x_1,x_2,\ldots \ : \ x_i^{-1} x_j x_i = x_{j+1} \ \forall \ i<j \rangle \]
Note that this is not a {\em minimal} presentation;  it is well-established  that $\{ x_0,x_1\}$ generates the whole of $\mathcal F$. However, the required relators are significantly less intuitive.
\end{definition}

It is by now folklore that group of canonical associativity isomorphisms in a (non-unit) semi-monoidal monoid is precisely Thompson's $\mathcal F$.  This -- or at least statements equivalent to this claim -- have been presented and rediscovered many times, and the following list is not exhaustive!

\begin{remark}\label{FinHistory}[Connections between $\mathcal F$ and coherence for associativity] 
 As early as 1973, R. Thompson and J. McKenzie noted \cite{MKT} a connection with `{\em associativity laws}'. In \cite{MVL06}, M. Lawson considered a class of semi-monoidal monoids in the special case where the tensor $\_ \star \_$ admits projection / injection arrows (as studied in \cite{PHD,MVL,TAC}), and demonstrated that the group of canonical isomorphisms\footnote{A curious feature of \cite{MVL06} is that the link between the given representation of $\mathcal F$ and associativity isomorphisms is not made explicit by the author, despite having (along with other authors)  described and used the same operations as associativity isomorphisms in previous work \cite{MVL,PHD}.} is precisely $\mathcal F$. In \cite{PD96}, P. Dehornoy considered $\mathcal F$ abstractly, and noted that, `The only [non-trivial] relations in this presentation of $\mathcal F$ correspond to the well-known MacLane-Stasheff pentagon'. In \cite{FL}, M. Fiore \& T. Leinster considered the strict monoidal category freely generated by a generic (pseudo-)idempotent and proved that its symmetry group is precisely $\mathcal F$.   In \cite{MB}, M. Brin talks about, `the resemblance of the usual coherence
theorems with Thompson's group $\mathcal F$', and this observation was used in \cite{JHRS} to note that -- at least in the `free' case, canonical associativity isomorphisms for a semi-monoidal monoid are precisely a copy of $\mathcal F$.  M. Lawson recently updated his paper \cite{MVL06} in \cite{MVL20}, and gave a construction of $\mathcal F$ based on (finite) maximal binary prefix codes; any categorically-minded reader will identify this construction as functorial, and the relevant prefix codes as a representation of MacLane's monogenic category $W$ (excluding the unit object).
\end{remark}

The following theorem and outline proof is presented with no claim to originality; it is given simply because the monoidal category theory is often implicit, rather than explicit, in several of the references of Remark \ref{FinHistory} above. We also wish to connect the generators of the presentation given in Definition \ref{F-def} above with the category theory.
\begin{theorem}\label{Fassoc-thm}
Let $(M, \_ \star \_)$ be a (non-unit) semi-monoidal monoid. The canonical associativity isomorphisms for $\_ \star \_$ form a copy of Thompson's group $\mathcal F$.
\end{theorem}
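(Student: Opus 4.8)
The plan is to construct a surjective group homomorphism $\Phi$ from $\mathcal F$ onto the group $\mathcal G$ of canonical associativity isomorphisms of $M$, and then to prove $\Phi$ injective; only the last step uses that the unique object $\bullet$ of $M$ is not a unit. Since $\bullet\otimes\bullet=\bullet$ is forced, every canonical associativity isomorphism -- every isomorphism built from the associator $\alpha=\alpha_{\bullet,\bullet,\bullet}$, identities, $\otimes$ and $\circ$ -- is an element of the monoid $M$, and these elements form a subgroup $\mathcal G\le M^{\times}$. I would take the tree-pair presentation of $\mathcal F$ (Dehornoy~\cite{PD96}, equivalently Lawson's prefix-code model \cite{MVL20}, i.e.\ the free semi-monoidal category on one object -- MacLane's $W$ with the unit removed): an element of $\mathcal F$ is an equivalence class of pairs $(T_-,T_+)$ of finite binary trees with equal leaf count, modulo grafting a common caret onto corresponding leaves, multiplication being given by passing to a common refinement. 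Define $\Phi(T_-,T_+)$ to be the value in $M$ of the unique canonical isomorphism from the bracketing $T_-$ to the bracketing $T_+$, which exists and is well defined by MacLane's coherence theorem.

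First I would verify that $\Phi$ is a well-defined homomorphism onto $\mathcal G$. Let $F : W\to M$ be the induced strict semi-monoidal functor; it satisfies $F(\bullet\otimes\bullet)=\bullet$, $F(1_{\bullet\otimes\bullet})=1_\bullet$, and $F(\alpha_{X,Y,Z})=\alpha$ for every $X,Y,Z$. Grafting a common caret onto leaf $i$ replaces the canonical isomorphism $a_{T_-,T_+}$ by the component of the same natural transformation at an input whose $i$-th coordinate is $\bullet\otimes\bullet$; because $F$ collapses this coordinate to $\bullet$ and preserves every building block out of which the isomorphism is assembled, the resulting element of $M$ is unchanged, so $\Phi$ is well defined. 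Compatibility with common-refinement multiplication is again coherence, and surjectivity onto $\mathcal G$ is clear, since every canonical associativity isomorphism is $F$ of a canonical isomorphism of $W$ and these generate $\mathcal G$. To connect with Definition~\ref{F-def} I would identify $x_n$ with the canonical isomorphism that reassociates the bracketing at the $(n+1)$-st node of a sufficiently long right-leaning comb of copies of $\bullet$ -- in particular $x_0=\alpha$ -- and observe directly that each relator $x_i^{-1}x_jx_i=x_{j+1}$ ($i<j$) asserts the equality of two \emph{parallel} canonical isomorphisms in $W$, hence holds in $\mathcal G$; this is precisely what makes $\Phi$ well defined and surjective as a map out of the presented group $\mathcal F$.

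The main obstacle is injectivity of $\Phi$: that no coincidences occur among the canonical isomorphisms of $M$ beyond those forced by coherence. Here I would invoke the normal-subgroup structure of $\mathcal F$ -- its commutator subgroup is simple and its centre is trivial, so every nontrivial normal subgroup of $\mathcal F$ contains $[\mathcal F,\mathcal F]$. Hence $\ker\Phi$ is either trivial (in which case $\Phi : \mathcal F\to\mathcal G$ is an isomorphism and we are done) or $\mathcal G\cong\mathcal F/\ker\Phi$ is a quotient of $\mathcal F/[\mathcal F,\mathcal F]\cong\mathbb Z^2$ and, in particular, abelian. It therefore remains to rule out $\mathcal G$ abelian when $\bullet$ is not a unit. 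By Proposition~\ref{NSS-prop}, non-unitality of $\bullet$ gives $\alpha\neq 1_M$; a direct but slightly delicate computation with the pentagon identity $(\alpha\otimes 1_\bullet)\,\alpha\,(1_\bullet\otimes\alpha)=\alpha\,\alpha$ together with the naturality of $\alpha$ then shows that $x_0$ and $x_1$ (equivalently $\alpha$ and $1_\bullet\otimes\alpha$) cannot commute: were they to commute, these identities would collapse, ultimately forcing $\bullet$ to be cancellable and hence, being self-similar, a Saavedra unit (Definition~\ref{Sunit-def}), contradicting the hypothesis. Thus $\ker\Phi=1$ and $\Phi$ exhibits $\mathcal G$ as a copy of $\mathcal F$.

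As a sanity check I would point to the concrete self-similar objects constructed later -- those arising in ${\bf Int}({\bf pInj})$ and from the Hilbert-hotel bijections -- which realise $\mathcal G$ faithfully as $\mathcal F$, in agreement with the free-case computations of \cite{FL,JHRS}. If one prefers to avoid the structure theory of $\mathcal F$, the same conclusion follows by combining the known faithful realisation of $\mathcal G$ as $\mathcal F$ in the free semi-monoidal monoid with the statement that a non-unit semi-monoidal monoid imposes no further identities among its canonical associativity isomorphisms -- but this last statement is essentially equivalent to the nonabelianness argument above, so I expect it to carry the same technical weight.
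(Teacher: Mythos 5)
Your overall strategy is the same as the paper's (explicitly outline-level) proof: realise the canonical associativity isomorphisms as a homomorphic image of $\mathcal F$ via the generators $x_0=\alpha$, $x_{j+1}=1\star x_j$ and the relations $x_i^{-1}x_jx_i=x_{j+1}$, and then invoke the standard fact that every proper quotient of $\mathcal F$ is abelian. Your packaging via tree pairs and a strict semi-monoidal functor out of the unit-free monogenic category $W$ is a cleaner way of saying the same thing, and your verification that well-definedness, surjectivity and the relators are all instances of coherence is fine.

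The one place you go beyond the paper -- flagging that the abelian quotient must still be excluded -- is also where your argument has a genuine gap. You assert that if $x_0$ and $x_1$ commute then ``the identities collapse, ultimately forcing $\bullet$ to be cancellable and hence a Saavedra unit,'' but this is not carried out, and the route through cancellability looks unworkable: cancellability (Definition \ref{Sunit-def}) requires the maps $1\star\_$ and $\_\star 1$ to be \emph{bijections} of $M$, and there is no visible mechanism by which a commutation relation between two particular elements would force these maps to be surjective. What commutation actually gives is $x_2=x_1$, i.e.\ $1\star(1\star\alpha)=1\star\alpha$; to proceed you need \emph{injectivity} of the self-embeddings $1\star\_$ and $\_\star 1$ (the property recorded in part 4 of Proposition \ref{notationabuse-prop} and attributed to \cite{JHRS}), which yields $1\star\alpha=\alpha$, whence all $x_j$ equal $\alpha$, the pentagon collapses to $(\alpha\star 1)\cdot\alpha^2=\alpha^2$, so $\alpha\star 1=1=1\star 1$, and injectivity again gives $\alpha=1$, contradicting non-unitality via Proposition \ref{NSS-prop}. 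So the missing lemma is injectivity of the tensor self-embeddings, not cancellability, and it must be stated and used explicitly. In fairness, the paper's own outline silently omits the non-abelianness check altogether (its appeal to ``no non-abelian quotients'' only bites once the image is known to be non-abelian), so you are more honest than the source about where the non-unit hypothesis has to do its work -- but as written your key step is an assertion, not a proof.
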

\begin{proof}{\em [OUTLINE]}\\
Let us denote the canonical associator for $\_ \star \_$ by $\alpha \in M$, and define $\{ x_j \}_{j\in \mathbb N}$ inductively by $x_0=\alpha$, and $x_{i+1}=1\star x_i$.  Functoriality of $\_ \star \_$ and MacLane's pentagon then immediately give the defining relations of Thompson's $\mathcal F$, as  $x_i^{-1} x_j x_i = x_{j+1} \ \forall \ i<j $. Thus the group of canonical associativity isomorphisms contains a homomorphic image of $\mathcal F$. However, a standard fact (e.g. \cite{MB96}) about $\mathcal F$ is that it has no non-abelian quotients, and so this is precisely a copy of $\mathcal F$. Finally, we may appeal to MacLane's pentagon to demonstrate that all canonical associativity isomorphisms for $\_ \star\_$ are generated by the set $\{ x_j\}_{j\in \mathbb N}$.
\end{proof}

\begin{remark}
Thompson's group $\mathcal F$ is itself, of course, a semi-monoidal monoid. This observation was made -- in non-categorical terms -- by K. Brown in \cite{KB} where he describes a group homomorphism $\mu:\mathcal F \times \mathcal F \rightarrow \mathcal F$ that is ``associative up to conjugation by the generator $x_0$''.  It is also remarkable that -- despite \cite{KB} being phrased in entirely non-categorical terms -- K. Brown also proves that $\mu (1 , \_)$ and $\mu (\_ , 1)$ are injective, but cannot be surjective. In our terms, he is establishing precisely that J. Kock's conditions for a Saavedra unit are {\em not} satisfied by the unique object of $\mathcal F$. 
\end{remark}

\subsection{Functors between monoidal \& semi-monoidal categories}  
We return to category theory, and define several functors between the (large) categories of monoidal and semi-monoidal categories.
\begin{definition}\label{units_stuff-def}
We denote the (large) categories of monoidal categories and semi-monoidal categories by $\bf MonCat$  and $\bf SemiMonCat$ respectively.  There is an obvious faithful functor $\iota : {\bf MonCat} \rightarrow {\bf SemiMonCat}$ corresponding to the triviality that every monoidal category is also semi-monoidal.

Given a semi-monoidal category $(\C,\otimes)$, we may adjoint a strict unit object simply by 
taking the categorical coproduct with the terminal monoidal category, and extending the tensor by strictness. This process is functorial; we denote this functor by  $(\_ )_{+I}:{\bf SemiMonCat}\rightarrow {\bf MonCat}$. 
Note that by construction, $(\C,\otimes )_{+I}$ has trivial scalars, and no non-trivial elements.

Going in the other direction, given a monoidal category $(\mathcal M,\_\otimes\_ ,I)$, let us denote by $(\mathcal M,\_\otimes\_ )_{-I}$ the full subcategory consisting of all non-unit objects.  This `forgetting the unit' process is also functorial; we denote this by $(\_ )_{-I}:{\bf MonCat}\rightarrow {\bf SemiMonCat}$. 
The composite  $\left( (\ )_{+I}\right)_{-I} = Id_{\bf SemiMonCat}$.  This is, of course, a one-sided inverse;  $\left( ( \ )_{-I}\right)_{+I} : {\bf MonCat}\rightarrow {\bf MonCat}$ is certainly not the identity functor.  Rather, it has the effect of deleting elements (and therefore, when appropriate, names and co-names).  This endofunctor on $\bf MonCat$ will prove important; we refer to it as the {\bf de-element functor}, and denote it by   $\left( \ \right)_{-\eafw}  = \left( \left( \ \right)_{-I}\right)_{+I}  : \ {\bf MonCat}\rightarrow {\bf MonCat}$.
\end{definition}

\begin{remark}Given a compact closed category $(\mathcal C,\otimes)$, the de-element functor certainly does not result in a compact closed category; $(\mathcal C,\otimes)_{-\eafw}$ has lost elements, including abstract scalars,  names, co-names, \& the distinguished unit / co-unit maps.  However, it may still be closed in the sense of \cite{ML77}.    
\end{remark}  

\begin{remark}
A natural question is whether, given some abelian monoid $U$,  we may `extend' a monoidal category with trivial scalars to one where the abstract scalars are taken from this abelian monoid?   This is a surprisingly non-trivial task; a procedure for doing so in the case of traced and compact closed categories is given in \cite{SAAS}.  We discuss this further in Section \ref{fd-sect}
\end{remark}

\section{A coherence theorem for self-similarity}\label{SSstrict-sect}
We now describe the relevant points of the coherence theorem for self-similarity \cite{JHRS}, which lives firmly within the category $\bf SemiMon$.

The theory of semi-monoidal monoids is essentially interchangeable with the theory of self-similarity in semi-monoidal categories.  Let $(M,\star)$ be a monoid with a semi-monoidal tensor; the unique object $m\in Ob(M)$ of this monoid is clearly self-similar, as $m\star m = m$. Similarly, the endomorphism monoid of any strictly self-similar object is clearly a semi-monoidal monoid\footnote{A special case of this is strictly reflexive objects in compact closed categories.  However, this is due to the special form that the internal hom takes in a compact closed category; the same need not be true in arbitrary monoidal closed categories, and is certainly not the case for non-monoidal closed categories.}. Thus semi-monoidal monoids may be considered to be a `strict' form of self-similar objects. This observation was formalised in the coherence theorem and strictification procedure of \cite{JHRS}, where the following useful results may be found :
\begin{theorem}\label{ss-coh-thm} Let $N\in Ob(\C)$ be a self-similar object of a semi-monoidal category $(\mathcal C , \otimes)$, and let 
$\code \in \C(N\otimes N,N)$ and $\decode \in \C ( N,N \otimes N)$  
be the unique (up to unique isomorphism) {\em code} and {\em decode} bijections exhibiting this self-similarity. Then
\begin{enumerate}
\item\label{montensor-pt} The operation defined on the endomorphism monoid of $N$ by 
\[ f\star_{\dc} g = \code (f\otimes g)\decode  \ \ \forall f,g\in {\mathcal C}(N,N) \]
is a semi-monoidal tensor on ${\mathcal C}(N,N)$.
\item\label{equiv-pt} There is a semi-monoidal equivalence of categories between 
\begin{enumerate}
\item  the semi-monoidal subcategory of $(\mathcal C ,\otimes)$ generated by $N$, 
\item the semi-monoidal monoid $(\mathcal C(N,N),\_ \star \_)$.
\end{enumerate}
\item As a consequence of Proposition \ref{NSS-prop}, the equivalence of categories of Point \ref{equiv-pt} maps {\em strict} to {\em non-strict} associativity in the case where $\_ \otimes \_$ is strict but $N$ is not a unit object.
\end{enumerate}
\end{theorem}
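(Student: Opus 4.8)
The plan is to treat the three parts in sequence, with Part 1 self-contained and Parts 2--3 drawing on the coherence theorem for self-similarity of \cite{JHRS}. The one recurring gadget is a chosen family of \emph{collapse isomorphisms} $c_W : W \to N$, one for each object $W$ of the semi-monoidal subcategory $\langle N\rangle$ generated by $N$ (i.e. each iterated $\otimes$-word in $N$), built by composing $\code$ arrows and associators $\tau$; existence of a coherent such family --- and its compatibility $c_{W\otimes W'} \cong \code\,(c_W\otimes c_{W'})$ --- is exactly what \cite{JHRS} provides, and rests on the uniqueness-up-to-unique-isomorphism of $\code,\decode$ quoted in the statement. I would state this once and use it freely.

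For Part \ref{montensor-pt}, there is nothing to check for functoriality of a binary operation on a one-object category, so the content is the associator and the pentagon. I would put
\[ \alpha \;=\; \code\,(1_N\otimes\code)\,\tau_{N,N,N}^{-1}\,(\decode\otimes 1_N)\,\decode \;\in\; \C(N,N) \]
(or its inverse, according to one's orientation convention for $\tau$), and verify, using only bifunctoriality of $\otimes$ and naturality of $\tau$, that $\alpha\circ((f\star g)\star h) = (f\star(g\star h))\circ\alpha$ for all $f,g,h$ --- after expanding $(f\star g)\star h = \code\,(\code\otimes 1_N)\,((f\otimes g)\otimes h)\,(\decode\otimes 1_N)\,\decode$ and the analogous expression for $f\star(g\star h)$, this splits into two identities both of which reduce to $\decode\,\code = 1$. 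The pentagon for $(\C(N,N),\star)$ then follows by expanding both sides and invoking the pentagon for $\otimes$ together with naturality of $\tau$. These are routine but fiddly diagram chases; I would present only the key cancellations.

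For Part \ref{equiv-pt}, I would define $F : \langle N\rangle \to (\C(N,N),\star)$ by sending every object to the unique object and a morphism $\phi : W_1 \to W_2$ to $c_{W_2}\,\phi\,c_{W_1}^{-1}$. Functoriality is immediate by telescoping; essential surjectivity is trivial; fullness and faithfulness hold because $\phi \mapsto c_{W_2}\,\phi\,c_{W_1}^{-1}$ is a bijection $\langle N\rangle(W_1,W_2)\to\C(N,N)$ with inverse $k\mapsto c_{W_2}^{-1}\,k\,c_{W_1}$. The semi-monoidal structure comes with coherence isomorphisms $\mu_{W,W'} = c_{W\otimes W'}\,(c_W^{-1}\otimes c_{W'}^{-1})\,\decode \in \C(N,N)$; a short calculation shows $\mu_{W_2,W_2'}\circ(F(\phi)\star F(\psi)) = F(\phi\otimes\psi)\circ\mu_{W_1,W_1'}$ and the hexagon relating $\mu$, the $\tau$'s of $\langle N\rangle$, and $\alpha$, so that $F$ is a strong semi-monoidal functor and hence a semi-monoidal equivalence. (When the collapse family can be taken strictly compatible, $\mu$ is the identity and $F$ is strict; in the degenerate cases --- in particular when $\otimes$ is strict but $N$ is not a unit --- it cannot be, and this non-strictness is precisely what Part \ref{equiv-pt} is recording.) Alternatively one may first check $F$ is an equivalence of bare categories and then \emph{define} $\star$ as the transport of $\otimes$ along $F$, which unifies Parts \ref{montensor-pt} and \ref{equiv-pt}.

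For Part 3, by Parts \ref{montensor-pt}--\ref{equiv-pt} the pair $(\C(N,N),\star)$ is a semi-monoidal monoid, so Proposition \ref{NSS-prop} applies: $\star$ is strictly associative iff its unique object is a unit object. Saavedra-unithood (pseudo-idempotency together with cancellability) is manifestly preserved and reflected by a semi-monoidal equivalence, and under $F$ the unique object corresponds to $N$; hence $\star$ is strict iff $N$ is a unit object of $\langle N\rangle$. Therefore, if $N$ is not a unit object then $\alpha \ne 1_N$, whereas if $\otimes$ is strict the associators of $\langle N\rangle$ are literally identities --- so the equivalence of Part \ref{equiv-pt} carries strict associativity to non-strict associativity, as claimed. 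I expect the main obstacle to be the coherence input underlying the collapse family $c_W$: establishing that such a family exists and is compatible is a MacLane-style coherence theorem for self-similarity, which is why I would lean on \cite{JHRS} rather than reprove it. A secondary point to flag is that ``$N$ is not a unit object'' must be read as ``not a unit object of the subcategory $\langle N\rangle$ it generates'' (equivalently, the unique object of $(\C(N,N),\star)$ is not a unit); failure of cancellability in an ambient $\C$ could in principle be witnessed only outside $\langle N\rangle$, but this distinction does not arise in the situations of interest here.
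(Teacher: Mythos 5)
Your proposal is correct and takes essentially the same route as the paper, which gives no argument of its own beyond deferring to \cite{JHRS}: the induced associator $\code(1\otimes\code)\tau^{-1}(\decode\otimes 1)\decode$, the coherent collapse family $c_W$, and the conjugation functor $\phi\mapsto c_{W_2}\,\phi\,c_{W_1}^{-1}$ are precisely the ingredients of the strictification procedure cited there, and indeed reappear concretely later in the paper as the functor $\Phi$ of Definition \ref{Phifunctor-def}. Your two caveats --- that the pentagon/coherence verification is the genuinely non-trivial input one must take from \cite{JHRS}, and that ``$N$ is not a unit object'' must be read relative to the subcategory it generates --- are both apt.
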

\begin{proof}
We refer to \cite{JHRS} for proofs of the above. These proofs are based on giving necessary and sufficient conditions for the commutativity of diagrams over a certain class of primitives (the tensor $\otimes$ and its canonical isomorphisms, the tensor $\star$ and its canonical isomorphisms,  and the code / decode arrows).  
\end{proof}

\begin{remark}[On the choice of code / decode arrows] It is natural to wonder whether the choice of code / decode arrows is significant -- in particular, if we are intending to strictify {\em reflexivity}, should we not ensure that the distinguished $\app$ and $\lam$ of Corollary \ref{applamSS-corol} are chosen, as opposed to some other isomorphisms with the same source / target?
	
The answer to this lies in the observation of \cite{JHRS} that code / decode arrows are unique up to unique isomorphism. Thus, all strictifications of some self-similar object are semi-monoidally equivalent (algebraically, they are isomorphic semi-monoidal monoids), regardless of the choice of code / decode arrows. Changing the code / decode isomorphisms may be seen as moving to an isomorphic representation of the same structure -- this is studied in more detail in \cite{CTLLP}, where an analogy between this and changes of basis in matrix representations is formalised.  
\end{remark}

\begin{corollary}\label{Finall-corol} The endomorphism monoid of every self-similar object in a semi-monoidal category  contains a copy of Thompson's $\mathcal F$. (Note that this result holds even if the semi-monoidal category in question is strictly associative).
\end{corollary}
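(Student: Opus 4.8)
The plan is to reduce the statement to Theorem~\ref{Fassoc-thm} by passing from the self-similar object to its associated semi-monoidal monoid. First I would take the (essentially unique) code and decode arrows $\code \in \C(N\otimes N, N)$, $\decode \in \C(N, N\otimes N)$ exhibiting the self-similarity of $N$, and apply Theorem~\ref{ss-coh-thm}(\ref{montensor-pt}) to equip the endomorphism monoid $\C(N,N)$ with the semi-monoidal tensor $f \star g = \code(f\otimes g)\decode$. The key point is that the \emph{underlying} monoid of the resulting semi-monoidal monoid $(\C(N,N),\star)$ is precisely the endomorphism monoid in the statement, and its canonical associativity isomorphisms --- being natural transformations of a functor on a one-object category --- are themselves elements of $\C(N,N)$. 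So Theorem~\ref{Fassoc-thm}, applied to $(\C(N,N),\star)$, would exhibit a copy of $\mathcal F$ sitting inside $\C(N,N)$, and the corollary would follow.

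The remaining work is to check that Theorem~\ref{Fassoc-thm} actually applies, i.e.\ that $(\C(N,N),\star)$ is a \emph{non-unit} semi-monoidal monoid, and to justify the parenthetical remark. For the first, I would use the semi-monoidal equivalence of Theorem~\ref{ss-coh-thm}(\ref{equiv-pt}) between $(\C(N,N),\star)$ and the semi-monoidal subcategory of $\C$ generated by $N$: under this equivalence the unique object corresponds to $N$, and ``being a Saavedra unit'' --- pseudo-idempotency together with full faithfulness of the two tensoring functors --- is a property preserved by semi-monoidal equivalence, so the unique object of $(\C(N,N),\star)$ is a unit object if and only if $N$ is. Hence one reads the corollary as concerning non-unit self-similar objects, which is forced in any case, since the endomorphism monoid of a unit object is a commutative monoid (Eckmann--Hilton) and so cannot contain the non-abelian group $\mathcal F$. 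Granting $N$ non-unit, Theorem~\ref{Fassoc-thm} applies directly. For the parenthetical, I would observe that nothing above used non-strictness of $\otimes$: even when $(\C,\otimes)$ is strictly associative, Proposition~\ref{NSS-prop} tells us that $\star$ is strictly associative precisely when the unique object of $(\C(N,N),\star)$ is a unit, which by the previous sentence is precisely when $N$ is a unit --- so for non-unit $N$ the associator of $\star$ is genuinely non-trivial and the argument is unaffected.

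The main obstacle --- really the only place anything subtle happens --- is the transfer of the ``unit object'' property across the equivalence of Theorem~\ref{ss-coh-thm}(\ref{equiv-pt}) above, together with the associated need to be explicit that the unit case is excluded (or vacuous); everything else is a direct chain of the cited results. I would also remark that this argument makes precise the informal comment in the statement: the copy of $\mathcal F$ arises not from any non-strictness assumed on $\C$, but from the non-strictness of the \emph{induced} tensor $\star$ on $\C(N,N)$, which is unavoidable exactly when $N$ is a genuinely (non-unit) self-similar object.
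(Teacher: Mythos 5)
Your proposal is correct and follows exactly the route the paper intends: the corollary is stated as an immediate consequence of Theorem~\ref{ss-coh-thm}(\ref{montensor-pt}) (which makes $\C(N,N)$ a semi-monoidal monoid under $\star$) combined with Theorem~\ref{Fassoc-thm}, and the paper offers no further argument. Your additional care over the unit-object edge case --- noting via Proposition~\ref{NSS-prop} and Eckmann--Hilton that the statement must implicitly concern non-unit self-similar objects --- is a genuine refinement the paper leaves tacit, and your justification of the parenthetical remark matches the paper's intent that the non-strictness lives in the induced tensor $\star$ rather than in $\otimes$.
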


The `adjoining a strict unit' functor $(\ )_{+I}:{\bf SemiMonCat}\rightarrow {\bf MonCat}$ of Definition \ref{units_stuff-def} does not give us, for free, a monoidal, rather than semi-monoidal, equivalence. 
Rather, we have the following simple corollary:
\begin{corollary}\label{eafw-equiv-corol} Let $(\C,\otimes ,I)$ be a monoidal category, and denote by $(N^\otimes,\otimes ,I)$ the full monogenic monoidal subcategory generated by some self-similar object $N\cong N\otimes N$. Then there exists a monoidal equivalence of categories between $(\C , \star)_{+I}$ and $(N^\otimes,\otimes ,I)_{-\eafw}$.
\end{corollary}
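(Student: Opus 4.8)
The plan is to read off this statement from the semi-monoidal coherence theorem for self-similarity (Theorem~\ref{ss-coh-thm}) by transporting its equivalence along the functor $(\_)_{+I}:{\bf SemiMonCat}\rightarrow {\bf MonCat}$, using the fact that the de-element functor is by definition $(\_)_{-\eafw}=((\_)_{-I})_{+I}$. Throughout I read $(\C,\star)$ in the statement as the semi-monoidal monoid $(\C(N,N),\star)$ of Theorem~\ref{ss-coh-thm}, Point~\ref{montensor-pt}.

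First I would observe that the semi-monoidal subcategory of $(\C,\otimes)$ generated by $N$ coincides with $(N^\otimes)_{-I}$, the full subcategory of the monogenic monoidal subcategory on the non-unit objects: since $N$ is self-similar but $\otimes$ need not be strict, $N^\otimes$ has as objects $I$ together with all bracketings of finite nonempty $\otimes$-words in $N$, and removing $I$ leaves precisely the objects reachable from $N$ under $\otimes$ alone. Hence $(N^\otimes,\otimes,I)_{-\eafw}=((N^\otimes)_{-I})_{+I}$ is exactly $(\_)_{+I}$ applied to the semi-monoidal subcategory generated by $N$. By Theorem~\ref{ss-coh-thm}, Point~\ref{equiv-pt}, this semi-monoidal subcategory is semi-monoidally equivalent to $(\C(N,N),\star)$; applying $(\_)_{+I}$ to both sides then produces the desired monoidal equivalence between $(N^\otimes,\otimes,I)_{-\eafw}$ and $(\C(N,N),\star)_{+I}=(\C,\star)_{+I}$.

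The one point requiring actual verification --- and the step I expect to be the main obstacle --- is that $(\_)_{+I}$ sends a (strong) semi-monoidal equivalence to a (strong) monoidal equivalence, i.e.\ that it behaves as a $2$-functor on the relevant $2$-cells. Concretely, given a strong semi-monoidal $F:\mathcal A\rightarrow\mathcal B$ with weak semi-monoidal inverse $G$ and invertible semi-monoidal $2$-cells $FG\cong Id$, $GF\cong Id$, one extends $F$ and $G$ to the adjoined units by strictness and must check that the coherence isomorphisms of $F_{+I}$ (those of $F$ on old objects, identities wherever the new unit appears, using strictness of the extended tensor in that unit) satisfy the monoidal-functor axioms, and that the extended $2$-cells remain monoidal natural isomorphisms. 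This is routine, but it is precisely here that the definition of $(\_)_{+I}$ as ``coproduct with the terminal monoidal category, tensor extended by strictness'' has to be unwound in detail; the conceptual content of the corollary is entirely contained in Theorem~\ref{ss-coh-thm} and the identity $(\_)_{-\eafw}=((\_)_{-I})_{+I}$.
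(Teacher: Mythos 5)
Your argument is correct and is precisely the route the paper intends: the paper offers no explicit proof (it presents this as a ``simple corollary'' of Theorem~\ref{ss-coh-thm} together with the definition $(\_)_{-\eafw}=((\_)_{-I})_{+I}$), and your reconstruction --- identifying $(N^\otimes)_{-I}$ with the semi-monoidal subcategory generated by $N$ up to the harmless presence of objects such as $N\otimes I$, invoking Point~\ref{equiv-pt} of Theorem~\ref{ss-coh-thm}, and transporting the equivalence along $(\_)_{+I}$ --- supplies exactly the missing details, including the correct reading of the typo $(\C,\star)_{+I}$ as $(\C(N,N),\star)_{+I}$. The verification you flag, that $(\_)_{+I}$ carries semi-monoidal equivalences to monoidal equivalences, is indeed the only substantive check, and it is routine for the reasons you give.
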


\begin{remark}
As Corollary \ref{eafw-equiv-corol} emphasises, the strictification process for self-similarity of \cite{JHRS} naturally lives within $\bf SemiMonCat$ rather than $\bf MonCat$; Applying it in the monoidal setting gives an equivalence of `de-elemented' categories. Some work is needed in order to use it to strictify reflexivity in a compact closed category, even given a self-similar object with a dagger.

The key to doing this is the canonical construction of {\em compact closed categories} from {\em symmetric traced monoidal  categories}, given by either the {\bf Int} construction of \cite{JSV}, or the {\bf GoI} construction of \cite{SA96}.  Notably, this construction equips the compact closed category with a suitable range of elements (\& hence names and co-names), {even when the underlying traced category has none}.
\end{remark}

\section{From traced categories to compact closure}\label{IntGoI-sect}
It is well known that the two constructions of {\em compact closed categories} from {\em symmetric traced monoidal  categories} (the {\bf Int} construction of \cite{JSV} and the {\bf GoI} construction of \cite{SA96}) are equivalent in the symmetric case, although \cite{JSV} also considered the more general braided /  tortile monoidal categories. Despite this, they used significantly different conventions, and the equivalence between the two was given by E. Haghverdi in \cite{HA} (see also Footnote 8.).  

Somewhat perversely for a volume dedicated to the work of S. Abramsky, we will work with the conventions of \cite{JSV} instead\footnote{Although this has become convention, we do not simply act out of peer pressure.  Rather, we consider the conventions of \cite{JSV}  and \cite{SA96} to give two different, but isomorphic  compositions  (along with tensors, duals, etc.) on the same underlying structure. Our claim is that interesting category theory and algebra may arise out of taking a 2-category or bi-category approach, and considering the interaction of the two. 
This was (implicitly) the approach taken in the identification in \cite{PHD} of the cut / cut-elimination procedure in Girard's Geometry of Interaction as compact closure. This was written when the author was unaware of the work of \cite{SA96} and based on an early draft of \cite{JSV}. This  referred to a second, isomorphic, `vertical' composition on hom-sets of $\bf Int(\mathcal C)$, introduced in order to transform the `horizontal' composition of \cite{JSV} into something that directly matched the resolution formula and cut-elimination procedure within Girard's Geometry of Interaction system.  In retrospect, the (isomorphic) `horizontal' and `vertical' compositions are those derived from the respective $\bf Int$ and $\bf GoI$ constructions of \cite{JSV} and \cite{SA96}. See Remark \ref{IntGoIsquares-rem} for more details on this.}.

\subsection{Categorical traces}
We start with the definition of a traced symmetric monoidal category, as a special case of the more general braided monoidal categories of \cite{JSV}.
\begin{definition}\label{Trace-def}
A {\bf trace} on a symmetric monoidal category $(\mathcal C , \_ \otimes \_  , \sigma_{\_ , \_ }, I)$ is an object-indexed family of mappings of homsets $Tr^U_{X,Y}:\mathcal C(X\otimes U , Y \otimes U) \rightarrow \mathcal C(X,Y)$   
that is natural in $X$ and $Y$, dinatural in $U$, and satisfies the following axioms:
\begin{itemize}
\item {\bf (Vanishing I)} 
$Tr^I_{X,Y}(\ )= Id_{\C(X,Y)}$, for all $X,Y\in Ob(\C )$.
\item {\bf (Vanishing II)} $Tr^{U\otimes V}_{X,Y} = Tr^U_{X,Y}\left (Tr^V_{X\otimes U,Y\otimes U}(f)\right)$ for all $f:X\otimes U \otimes V \rightarrow Y \otimes U \otimes V$.
\item {\bf (Yanking)} $Tr^U_{U,U}(\sigma_{U,U})=1_U$.
\item {\bf (Superposing)} $Tr^U_{X,Y}(f)\otimes g = Tr^U_{X\otimes A,Y\otimes B}\left((1_Y\otimes \sigma_{B,U})(f\otimes g)(1_X\otimes \sigma_{A,U})\right)$ for all $f:X\otimes U \rightarrow Y\otimes U$, $g:A\rightarrow B$.
\end{itemize}
\begin{remark}\label{V1-rem} A consequence of the Vanishing I axiom  is that when $(\C,\otimes, I)$ has trivial scalars, traces are uniquely determined by their action on non-unit objects.  The Vanishing II axiom is also sometimes known as the `confluence axiom', for obvious reasons.\end{remark}
The diagrammatic calculus for traced \& compact closed categories is well-established in \cite{JS1,JS2,JSV}, where  traces appear as feedback loops :
\begin{center}
\scalebox{0.6}{\small 
\xymatrix{
			&f:X\otimes U \rightarrow Y\otimes U		&				&												&			& 	Tr^U(f) : X\rightarrow Y			&						\\
Y			&								&	U			&												& Y			&								&	U	 \ar@{-} `r[d] `[dd] [dd] 	\\ 
			&*+<3em>[F]{ F} \ar[ur] \ar[ul]			&				&	\hspace{3em}\mbox{ \bf is mapped to }\hspace{3em}		&			&	*+<3em>[F]{ F} \ar[ur] \ar[ul]		&			\\ 
X\ar[ur]		&								&	U	\ar[ul]	&												& X \ar[ur]		&								&	U	\ar[ul]	\\ 	
}
}
\end{center}
\end{definition}

\section{From traces to compact closure}\label{Int}
We now give an exposition of the construction of compact closed categories from traced symmetric monoidal categories 
\noindent
\begin{definition}\label{Int-def}
Let $(\mathcal C , \otimes , \sigma , I, Tr_{\_,\_}^{\_}(\ ))$ be a traced symmetric monoidal category.  The compact closed category $({\bf Int \mathcal C},\_\Box\_ , (\ )^* , \epsilon_\_ , \eta_\_ )$ is defined as follows:
\begin{enumerate}
\item {\bf (Objects)} An object $(X,U)$ of $\bf Int \C$ is a pair of objects of $\mathcal C$.
\item {\bf (Arrows)} The homset ${\bf Int \C} ((X,U),(Y,V))$ is precisely $\C(X\otimes V, Y \otimes U)$.

\item {\bf (Composition)} Given arrows $f_\ud : (X,U)\rightarrow (Y,V)$ and $g_\ud : (Y,V)\rightarrow (Z,W)$, their composite $g_\ud \circ f_\ud\in  {\bf Int\C}((X,U),(Z,W))$ 
is defined using the trace, symmetry,  \& composition of the underlying category $\C$ (which we denote by concatenation), as follows :
\begin{center}
\scalebox{0.6}{\small
\xymatrix{
						&&			&							&				& \hspace{8em} 	&		Z		&							&								&	U			&&	\\
						&&	Z 		&							&	V 			&				&		Z \ar[u]	&							&	V \ar@{-} `r[dr] `[dddddd] [dddddd]	&				&&	\\
g_\ud\in {\bf Int\C}((Y,V),(Z,W)) 	&&			&*+<3em>[F]{ g} \ar[ur] \ar[ul]		&				&				&				&*+<3em>[F]{ g} \ar[ur] \ar[ul]	&									&				&& \\
						&&	 Y \ar[ur]	&							&	W \ar[ul]		&				&		 Y \ar[ur]	&							&	W \ar[ul]						&				&&	\\
						&&			&							&				&				&				&							&								&				&&	g_\ud \circ f_\ud\in {\bf Int\C}((X,U),(Z,W))	\\			
						&&	Y  		&							&	U 			&				&		Y  \ar[uu]	&							&	U \ar[uuuuur]					&				&&	\\
f_\ud\in {\bf Int\C}((X,U),(Y,V)) 	&&			&*+<3em>[F]{ f} \ar[ur] \ar[ul]		&				&				&				&*+<3em>[F]{ f} \ar[ur] \ar[ul]		&								&				&&	\\
						&&	 X \ar[ur]	&							&	V \ar[ul]		&				&		 X \ar[ur]	&							&	V \ar[ul]						&				&&	\\
						&&	 		&							&				&				&		 X \ar[u]	&							&								&	W \ar[uuuuul]	&&	\\
} 
}
\end{center}
\item {\bf (Identities)} The identity at an object $1_{(X,U)}\in{\bf Int\C}((X,U),(X,U))$ is simply  $(1_X\otimes 1_U)\in \C(X\otimes U,X\otimes U)$. 
\item {\bf The Tensor} $\bf Int\C$  has a symmetric monoidal tensor, $\_ \Box \_$, given by:
\begin{itemize}
\item {\bf (Objects)} $(X_1,U_1) \Box (X_2,U_2) \ = \ (X_1\otimes X_2 , U_2 \otimes U_1)$ for all $(X_1,U_1),(X_2,U_2)\in Ob({\bf Int\C
})$.
\item {\bf (Arrows)}
Given arrows $f_\ud :(X_1,U_1)\rightarrow (Y_1,V_1)$ and $g_\ud :(X_2,U_2)\rightarrow (Y_2,V_2)$,
their tensor $f_\ud \Box g_\ud  : (X_1,U_1)\Box (X_2,U_2) \rightarrow (Y_1,V_1)\Box(Y_2,V_2 )$ 
is given diagrammatically, as
\begin{center}
\scalebox{0.6}{\small 
\xymatrix{
			&						&				&		&			&								&				\\
Y_1			&						&   Y_2			&		& U_2		&								&  U_1		\\
			&						&				&		&			&								&				\\
Y_1 \ar[uu]	&						&	U_1 \ar[uurrrr]	&		& Y_2 \ar[uull]	&								&	U_2	\ar[uull]	\\ 
			&*+<3em>[F]{ f} \ar[ur] \ar[ul]	&				&		&			&	*+<3em>[F]{ g} \ar[ur] \ar[ul]		&			\\ 
X_1\ar[ur]		&						&	V_1	\ar[ul]	&		& X_2 \ar[ur]	&								&	V_2	\ar[ul]	\\ 	
			&						&				&		&			&								& 					\\
X_1\ar[uu]		&						&	X_2 \ar[uurr]	&		& V_2\ar[uurr]	&								&	V_1\ar[uullll]			\\
			&						&				&		&			&								&				\\
}
}
\end{center}
\end{itemize}
\item {\bf (The unit object)} The unit object is simply $(I,I)$, where $I$ is the unit object of $\C$.
\item {\bf (The dual on objects)} This is defined by $(X,U)^* = (U,X)$. 
\item  {\bf (The dual on arrows)} This is defined in terms of the symmetry isomorphism of $\C$; given an arrow $f_\ud\in \C((X,U), (Y,V))$, then 
$\left( f_\ud \right)^* = \left( \sigma_{Y,U} f \sigma_{X,V}\right)_\ud \in {\bf Int\C}((U,X),(Y,V))$.
\item {\bf (The unit and co-unit)} The distinguished {\em unit} and {\em co-unit} arrows
$\eta:(I,I)\rightarrow (X,U) \Box(U,X)$ and $\epsilon : (U,X)\Box(X,U)\rightarrow (I,I)$
are specified by {\em symmetry arrows} in the underlying traced monoidal category $\mathcal C$, so  
\[ \eta_{(X,U)} =	\left( \sigma_{I,X\otimes U}\right)_\ud \in {\bf Int\C}((I,I),(X,U)\Box(U,X)) \]
and 
\[ \epsilon_{(U,X)}  = 	\left( \sigma_{U\otimes X,I}\right)_\ud \in {\bf Int\C}((U,X)\Box (X,U), (I,I)) \]
\end{enumerate}

\end{definition}

Although Joyal, Street, and Verity assumed strict associativity of the underlying traced monoidal category (and hence of the resulting compact closed category), and often left canonical isomorphisms implicit, it is nevertheless straightforward to write down the construction in the case where these canonical isomorphisms are made explicit. This rather thankless task was carried out in \cite{PHD} where the following may be found:

\begin{lemma}\label{Intcanonisos-lem}
Let $(\C , \otimes \sigma_{\_,\_} , \tau_{\_,\_,\_} , I , Tr(\ ) )$ be a traced symmetric monoidal category. Then the associativity and symmetry isomorphisms of the resulting compact closed category $({\bf Int\C}, \_ \Box \_)$ are given by 
\begin{description}
\item[Symmetry] For all $(X,U),(X',U')\in Ob({\bf Int\C})$,
\[ S_{(X,U),(X',U')} = \sigma_{X,X'}\otimes \sigma_{U,U'} 
\] 
\item[Associativity] For all $(X,U),(Y,V),(Z,W)\in Ob({\bf Int\C})$,
\[ T_{(X,U),(Y,V),(Z,W)} = \alpha_{X,Y,Z} \otimes \alpha^{-1}_{W,V,U} \]
\end{description}
\end{lemma}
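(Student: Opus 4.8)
The plan is to work directly from Definition \ref{Int-def}. Recall that an arrow of $\mathbf{Int}\C$ from $(P,Q)$ to $(P',Q')$ is \emph{by definition} a $\C$-arrow $P\otimes Q'\to P'\otimes Q$, and that $\mathbf{Int}\C$ is symmetric monoidal under the tensor $\_\Box\_$ described there. By MacLane's coherence theorem its associativity and symmetry isomorphisms are the \emph{unique} natural families of isomorphisms satisfying the pentagon (resp.\ hexagon) equations together with the usual compatibilities; hence it suffices to (i) exhibit candidate arrows of the correct type in $\mathbf{Int}\C$, and (ii) check that they are natural and satisfy those coherence equations \emph{in $\mathbf{Int}\C$}, since uniqueness then identifies them with $S$ and $T$.

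First I would compute the relevant objects from the object-level formula $(X_1,U_1)\Box(X_2,U_2)=(X_1\otimes X_2,U_2\otimes U_1)$. For symmetry this gives $(X,U)\Box(X',U')=(X\otimes X',\,U'\otimes U)$ and $(X',U')\Box(X,U)=(X'\otimes X,\,U\otimes U')$, so $S_{(X,U),(X',U')}$ must be a $\C$-arrow $(X\otimes X')\otimes(U\otimes U')\to(X'\otimes X)\otimes(U'\otimes U)$, and $\sigma_{X,X'}\otimes\sigma_{U,U'}$ is manifestly of this type. For associativity one computes $((X,U)\Box(Y,V))\Box(Z,W)=((X\otimes Y)\otimes Z,\,W\otimes(V\otimes U))$ and $(X,U)\Box((Y,V)\Box(Z,W))=(X\otimes(Y\otimes Z),\,(W\otimes V)\otimes U)$, so $T$ must be a $\C$-arrow $((X\otimes Y)\otimes Z)\otimes((W\otimes V)\otimes U)\to(X\otimes(Y\otimes Z))\otimes(W\otimes(V\otimes U))$. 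Here the order-reversal in the second coordinate of $\_\Box\_$ is exactly what produces an associator \emph{and} an inverse associator: the covariant (first) coordinate re-brackets $X,Y,Z$ in the same sense as the tensor of $\mathbf{Int}\C$, while the contravariant (second) coordinate carries the re-bracketing of $W,V,U$ in the opposite sense, whence the factor $\alpha^{-1}_{W,V,U}$. This is the one place where care is genuinely needed, and it is cleanest to read off using the string-diagram calculus of \cite{JS1,JS2,JSV}.

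It then remains to check that these candidates are natural in their arguments and satisfy MacLane's pentagon, the hexagon, and the symmetry/associativity compatibilities \emph{in $\mathbf{Int}\C$}. Since composition and tensor of arrows in $\mathbf{Int}\C$ are built from composition, symmetry, and the trace of $\C$, each such equation unwinds into a diagram over these primitives, which is discharged by naturality and dinaturality of the trace, the Vanishing, Yanking and Superposing axioms, and the pentagon and hexagon of $\C$ itself; that both candidates are isomorphisms is immediate, being tensor products of isomorphisms of $\C$. I expect the main obstacle to be purely one of bookkeeping --- tracking which coordinate of $\_\Box\_$ lands in which tensor factor of the underlying $\C$-arrow as the definitions of composition and tensor are unwound --- rather than anything conceptual; this is presumably why \cite{PHD} calls the explicit verification a ``thankless task''. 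I would therefore present the computation of the two types above in full, note that isomorphy is automatic, and refer to \cite{PHD} (or a short string-diagram argument) for the routine coherence-equation checks, invoking MacLane coherence to conclude that these are \emph{the} canonical isomorphisms of $\mathbf{Int}\C$.
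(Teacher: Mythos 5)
Your overall strategy is the same as the paper's: compute the relevant objects and hom-sets from Definition \ref{Int-def}, check that the proposed arrows have the right type, and defer the naturality and coherence verifications to \cite{PHD}. Your additional point that type-checking alone cannot identify an arrow as \emph{the} canonical isomorphism --- so that naturality, the coherence equations, and MacLane's uniqueness must be invoked --- is a genuine improvement in logical structure over the paper's ``the key point is to demonstrate that the sources and targets are correct''. The symmetry case is handled correctly and agrees with the paper.

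The associativity case, however, contains a real error, and it sits exactly where you say ``care is genuinely needed''. Your object-level computations are right, but the type you then assign to $T$, namely
\[ ((X\otimes Y)\otimes Z)\otimes((W\otimes V)\otimes U)\longrightarrow (X\otimes(Y\otimes Z))\otimes(W\otimes(V\otimes U)), \]
is not the type of $\alpha_{X,Y,Z}\otimes\alpha^{-1}_{W,V,U}$: with the convention $\alpha_{X,Y,Z}:X\otimes(Y\otimes Z)\to(X\otimes Y)\otimes Z$ of Definition \ref{semi-mon-def}, that arrow has domain $(X\otimes(Y\otimes Z))\otimes((W\otimes V)\otimes U)$, so your candidate fails to type-check against your own computation in its first factor. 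The underlying heuristic --- ``the contravariant coordinate re-brackets in the opposite sense, whence the factor $\alpha^{-1}_{W,V,U}$'' --- is the culprit: the order reversal $U_2\otimes U_1$ in the object part of $\Box$ \emph{already} reverses the bracketing of the second components (the second component of $(A\Box B)\Box C$ is the right-bracketed $W\otimes(V\otimes U)$, that of $A\Box(B\Box C)$ the left-bracketed $(W\otimes V)\otimes U$), and the contravariant slot in the hom-set formula ${\bf Int\C}((P,Q),(P',Q'))=\C(P\otimes Q',P'\otimes Q)$ reverses it a second time, so the two effects cancel. Computed honestly from Definition \ref{Int-def}, the associator $A\Box(B\Box C)\to(A\Box B)\Box C$ lives in
\[ \C\bigl((X\otimes(Y\otimes Z))\otimes(W\otimes(V\otimes U)),\ ((X\otimes Y)\otimes Z)\otimes((W\otimes V)\otimes U)\bigr), \]
whose canonical inhabitant is $\alpha_{X,Y,Z}\otimes\alpha_{W,V,U}$, with both factors re-bracketing in the \emph{same} sense. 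Note also that the paper's own displayed hom-set for associativity pairs the components as $\C(P\otimes Q,P'\otimes Q')$ rather than $\C(P\otimes Q',P'\otimes Q)$ --- consistent with the stated formula $\alpha\otimes\alpha^{-1}$ but not with the hom-set convention of Definition \ref{Int-def} --- so there is a convention clash that must be resolved before the lemma can be verified at all; your proposal neither resolves it nor reproduces either of the paper's two versions, but introduces a third, mutually inconsistent, one. Until the conventions are pinned down and the first coordinate of your type corrected, the ``manifestly of this type'' step for associativity does not go through.
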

\begin{proof} We refer to \cite{PHD} for the details. The key point is to demonstrate that the sources and targets are correct. 
\begin{description}
\item[Symmetry]
\[\C((X\otimes X')\otimes (U\otimes U'),(X'\otimes X)\otimes (U'\otimes U)) = {\bf Int\C}((X,U)\Box(X',U'),(X',U')\Box(X,U))    \]
\item[Associativity]
\[ \begin{array}{rl} & \C( \left( X\otimes (Y\otimes Z)\right) \otimes \left( (W \otimes V) \otimes U\right) , \left( (X\otimes Y)\otimes Z)\right) \otimes \left( W \otimes (V \otimes U)\right) \\ = & {\bf Int\C}((X,U)\Box((Y,V)\Box(Z,W)) , ((X,U)\Box(Y,V))\Box (Z,W)) \\ \end{array}
\]
\end{description}
\end{proof}


A common intuition is that $\bf Int\mathcal C$ is a `dualised' version of $\mathcal C $  that contains both $\mathcal C$ and $\mathcal C^{op}$.  This is apparent in the following, taken from \cite{JSV}.
\begin{proposition}\label{cocontra-prop}
There exist faithful traced monoidal functors $L_I ,R_I: \C \rightarrow {\bf Int\C}$ that are covariant and contravariant respectively. These are given by, for all $X,Y\in Ob(\mathcal C)$, and $f\in \mathcal C(X,Y)$,  
\begin{description}
\item[Covariant] $L_I(X)=(X,I)$ and $L_I(f)= (f\otimes 1_I)_\ud\in {\bf Int\C}((X,I),(Y,I))$
\item[Contravariant] $R_I(X)= (I,X)$ and $R_I(f) = (1_I\otimes f)_\ud\in {\bf Int\C}((I,X),(I,Y))$
\end{description}
\end{proposition}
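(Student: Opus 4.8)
The plan is to treat $L_I$ and $R_I$ as assignments that are strict on objects and to verify, in turn, (i) that the stated arrow-assignments land in the correct homsets, (ii) functoriality (covariant, resp. contravariant), (iii) compatibility with the traced symmetric monoidal structure, and (iv) faithfulness. The covariant and contravariant cases are mirror images, differing only in whether the unit $I$ is carried in the first or the second coordinate of an object of $\mathbf{Int}\,\C$, so I would carry out $L_I$ in detail and indicate the trivial changes for $R_I$. Point (i) is immediate from the homset convention of Definition \ref{Int-def}: for $f\in\C(X,Y)$ the map $f\otimes 1_I : X\otimes I \to Y\otimes I$ is by definition an element of $\mathbf{Int}\,\C((X,I),(Y,I))$, and dually $1_I\otimes f : I\otimes X\to I\otimes Y$ is an element of $\mathbf{Int}\,\C((I,Y),(I,X))$, which is the correct target for a contravariant functor.

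For functoriality, identities are immediate since $1_{(X,I)} = 1_X\otimes 1_I$. The substance is in composites. When $L_I(g)\circ L_I(f)$ is formed using the composition formula of Definition \ref{Int-def}, the object fed back through the trace is (a copy of) the unit object $I$; by the Vanishing I axiom of Definition \ref{Trace-def} the trace over $I$ is the identity on homsets, so the composition formula collapses, up to the symmetry and unitor isomorphisms that appear in it, to ordinary composition in $\C$. Chasing those isomorphisms -- an elementary coherence argument using naturality of $\sigma$ and of the left/right unitors, together with Lemma \ref{Intcanonisos-lem} -- yields $L_I(g)\circ L_I(f) = L_I(gf)$. The identical computation for $R_I$ again has $I$ as feedback object, but with the roles of source and target interchanged, giving $R_I(g)\circ R_I(f) = R_I(fg)$, i.e. contravariance.

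For the traced monoidal structure, on objects $L_I(X)\Box L_I(Y) = (X\otimes Y, I\otimes I)$, which is canonically isomorphic to $(X\otimes Y, I) = L_I(X\otimes Y)$ via the unitor of $\C$ in the second coordinate, and $L_I(I) = (I,I)$ is the unit object of $\mathbf{Int}\,\C$ on the nose; the monoidal-functor coherence squares then reduce to coherence in $\C$. Trace preservation is checked by identifying the trace of $\mathbf{Int}\,\C$ along objects of the form $(U,I)$ (resp. $(I,U)$ for $R_I$) with the trace of $\C$ over $U$, which follows by unwinding the compact-closed (equivalently, $\mathbf{Int}$-)trace and again invoking the Vanishing and Superposing axioms; intuitively this is the statement that the $\mathbf{Int}$ construction embeds $\C$ and $\C^{op}$, with their traces intact, into the two coordinates. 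Faithfulness is the easiest point: $L_I(f) = L_I(g)$ says $f\otimes 1_I = g\otimes 1_I$ in $\C$, and conjugating by the right unitor of $\C$ recovers $f = g$ (equivalently, $(-)\otimes 1_I$ conjugated by the unitors is literally the identity endofunctor of $\C$, hence faithful); the argument for $R_I$ uses the left unitor instead.

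The only genuinely fiddly step is the functoriality-and-trace bookkeeping: writing the $\mathbf{Int}\,\C$ composition and (partial) trace formulas out with every canonical isomorphism made explicit, and verifying that once the feedback object is recognised as $I$ and eliminated via Vanishing I, what survives is exactly the corresponding operation of $\C$. Everything else is a direct reading of the definitions of Section \ref{Int}.
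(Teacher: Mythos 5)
The paper itself offers no proof of this proposition --- it is quoted directly from Joyal, Street \& Verity --- so your proposal can only be measured against the standard argument. Most of it is sound: the homset bookkeeping, the observation that $1_{(X,I)}=1_X\otimes 1_I$, the covariant case via Vanishing I, the monoidal coherence up to unitors, and faithfulness via conjugation by unitors are all correct. You have also, rightly, read the target of $R_I(f)$ as ${\bf Int\C}((I,Y),(I,X))$, which is what the homset convention of Definition \ref{Int-def} actually forces.

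There is, however, a genuine gap in the contravariant case. In the composition formula of Definition \ref{Int-def}, the object that is traced out is the \emph{second} coordinate $V$ of the intermediate object $(Y,V)$; the first coordinate $Y$ is consumed by ordinary composition in $\C$. For $L_I$ the intermediate object is $(Y,I)$, so the feedback object really is $I$ and Vanishing I collapses everything, as you say. But for $R_I$ the intermediate object is $(I,Y)$: the ordinary-composition part runs along the inert $I$ wire, and the entire content of composing $f$ and $g$ is carried by the feedback wire labelled $Y$. The composite $R_I(f)\circ R_I(g)$ is therefore a genuine trace $Tr^Y(\,\cdots)$ over a non-unit object, and Vanishing I says nothing about it; your claim that the computation ``again has $I$ as feedback object'' is false. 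What is actually needed is the generalised yanking identity $Tr^Y_{X,Z}\bigl(\sigma_{Y,Z}\,(f\otimes g)\bigr)=g\,f$, which is derived from naturality, dinaturality (sliding) and the Yanking axiom of Definition \ref{Trace-def}. This is not a cosmetic omission: the contravariant embedding of $\C$ into ${\bf Int\C}$ is exactly where the trace structure, and yanking in particular, is indispensable --- were your reduction to Vanishing I correct, the $\bf Int$ construction would go through for an arbitrary symmetric monoidal category. The same caveat applies to your remark that trace preservation for $R_I$ is checked ``along objects of the form $(I,U)$'': there too the object being fed back is $U$, not $I$, and the verification again rests on sliding and yanking rather than on Vanishing I.
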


Should we be prepared to consider semi-monoidal rather than monoidal functors, the above proposition generalises to arbitrary non-unit objects.
In this setting, we have no single distinguished covariant and contravariant faithful semi-monoidal functors from $(\mathcal C , \otimes)$ to $({\bf Int\C},\Box)$; rather, we have such functors indexed by the objects of $\mathcal C$.
\begin{proposition}\label{semi-cocontra-prop}
Given an arbitrary object $U \in Ob(\mathcal C)$ of some traced symmetric monoidal category, we may define both covariant and contravariant faithful 
semi-monoidal functors from $(\mathcal C,\otimes)$ to $(\bf Int \mathcal C, \Box)$ by, for all $X,Y\in Ob(\mathcal C)$, and $f\in \mathcal C(X,Y)$,  
\begin{description}
\item[Covariant] $L_U(X) = (X,U)$ and $L_U(f) = (f\otimes 1_U)_\ud\in {\bf \C}((X,U),(Y,U))$
\item[Contravariant] $R_U(X) = (U,X)$ and $R_U(f)  =  (1_U\otimes f)_\ud\in {\bf Int\C}((U,X),(U,Y))$
\end{description}
\end{proposition}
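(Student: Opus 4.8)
The plan is to read Proposition~\ref{semi-cocontra-prop} as the evident generalisation of Proposition~\ref{cocontra-prop}, which is the case $U=I$, and to verify its three assertions --- functoriality (with the stated variance), faithfulness, and semi-monoidality --- in that order, with almost all of the work going into functoriality. First I would note that both assignments are at least well defined on homsets directly from Definition~\ref{Int-def}: since ${\bf Int}\,\C((X,U),(Y,U)) = \C(X\otimes U, Y\otimes U)$ we have $(f\otimes 1_U)_\ud$ in the required homset, and since ${\bf Int}\,\C((U,Y),(U,X)) = \C(U\otimes X, U\otimes Y)$ the arrow $(1_U\otimes f)_\ud$ is available in the contravariant direction. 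The only genuinely new feature relative to the case $U=I$ is that the trace appearing in ${\bf Int}\,\C$-composition is now a trace over $U$ rather than the trivial trace over $I$ permitted by the Vanishing~I axiom, so the real content is to show that this trace still collapses.

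For functoriality, preservation of identities is immediate, as $L_U(1_X) = 1_X\otimes 1_U = 1_{(X,U)}$ and similarly for $R_U$. For composition I would unfold $L_U(g)\circ L_U(f)$ using the composition diagram of Definition~\ref{Int-def}: it is $Tr^U$ applied to a symmetry-conjugated arrangement of $f\otimes 1_U$ and $g\otimes 1_U$. Since the fed-back wire is precisely the identity part $1_U$ of each factor, one can use naturality and dinaturality of the trace together with the Yanking and Superposing axioms to slide those identities past the symmetries and erase the loop, leaving $(gf)\otimes 1_U = L_U(gf)$. For $R_U$ the same unfolding applies, but now the traced wire is the one along which $f$ and $g$ are composed while the $U$-wire runs straight through; the trace axioms reduce this feedback to the sequential composite of $f$ followed by $g$, so that $R_U(f)\circ R_U(g) = (1_U\otimes gf)_\ud = R_U(g\circ f)$, which is exactly the claimed contravariance.

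Faithfulness is then straightforward: $(\_)\otimes 1_U$ and $1_U\otimes(\_)$ are injective on homsets --- one can, for instance, recover a scalar multiple of $f$ from $f\otimes 1_U$ by tracing out $U$ and invoking naturality of the trace, or argue directly in $\C$. For the semi-monoidal structure I would exhibit the coherence isomorphisms $L_U(A)\Box L_U(B) = (A\otimes B,\, U\otimes U)\ \cong\ (A\otimes B, U) = L_U(A\otimes B)$, together with the corresponding contravariant ones for $R_U$, by assembling them from the explicit symmetry and associativity isomorphisms of ${\bf Int}\,\C$ recorded in Lemma~\ref{Intcanonisos-lem} and the canonical data comparing $U\otimes U$ with $U$, and then checking the pentagon- and hexagon-type coherence conditions, which reduce entirely to coherence already holding in $\C$.

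I expect the main obstacle to be book-keeping rather than anything conceptual. The delicate steps are, first, carrying the canonical symmetry isomorphisms of ${\bf Int}\,\C$ through the unfolding of composition and confirming that the $Tr^U$ loop genuinely collapses --- in effect the ``rather thankless task'' of writing the ${\bf Int}$ construction out with all canonical isomorphisms made explicit, as in \cite{PHD,JSV} --- and, second, the routine but lengthy verification of the coherence diagrams for the isomorphisms just mentioned. Neither uses anything beyond the trace axioms of Definition~\ref{Trace-def} and the coherence of $\C$, but both require care with the many symmetry isomorphisms that the compact form of Definition~\ref{Int-def} keeps implicit.
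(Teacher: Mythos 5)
Your overall strategy is the same as the paper's: the paper's entire proof is the one-line remark that the argument for Proposition~\ref{cocontra-prop} goes through once one stops insisting that the unit object be preserved, and your functoriality analysis is a correct expansion of that remark --- unfold the ${\bf Int\C}$ composite, observe that the traced wire carries only identities (for $L_U$) or carries the $f$-then-$g$ path (for $R_U$), and collapse it using yanking, naturality/dinaturality and superposing to leave $(gf)\otimes 1_U$ and $1_U\otimes gf$ respectively. That part, together with your careful identification of which homset $(1_U\otimes f)_\ud$ actually lives in, is sound.

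The gaps are in the other two clauses, and in both cases the step you wave at would genuinely fail for an \emph{arbitrary} $U$. For faithfulness, ``$(\_)\otimes 1_U$ is injective on homsets'' is not a general fact about traced symmetric monoidal categories: your proposed recovery of $f$ from $f\otimes 1_U$ by tracing out $U$ yields $f$ composed with the scalar $Tr^U_{I,I}(1_U)$, and nothing forces that scalar to be cancellable. In $({\bf FdVect},\otimes)$ with $U$ the zero space, $f\otimes 1_U$ is the zero map for every $f$, so $L_U$ is not faithful at all; in characteristic $p$ with $\dim U=p$ the functor \emph{is} faithful but the scalar is $0$, so the trace trick proves nothing either way. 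For semi-monoidality, the structure map you posit, $(A\otimes B,\,U\otimes U)\cong(A\otimes B,\,U)$, presupposes ``canonical data comparing $U\otimes U$ with $U$'' --- but such data exists only when $U$ is self-similar, which is not among the hypotheses. Both difficulties evaporate in the cases the paper actually uses ($U=I$, where the unitors supply everything, and $U=\N$ in $({\bf pInj},\uplus)$, which is self-similar and for which $\_\uplus\,\N$ is faithful on homsets), but as written your proof --- like the proposition itself, whose own proof is silent on these points --- needs the additional hypotheses that $\_\otimes U$ is faithful and that $U\cong U\otimes U$; you should either supply them or restrict the class of $U$ accordingly.
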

\begin{proof} This follows in precisely the same way as the standard proofs of Proposition \ref{cocontra-prop}; we are simply not insisting that our functors preserve a unit object.
\end{proof}

\begin{corollary}\label{selfdualdagger-corol}
As a well-established corollary, objects of the form$(N,N)\in Ob({\bf Int\C})$ are self-dual, for arbitrary $N\in Ob(\mathcal C)$, and all self-dual objects are isomorphic to some object of this form. 
\end{corollary}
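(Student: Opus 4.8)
The forward implication is immediate from the definition of the dual in ${\bf Int\C}$. By the clause ``the dual on objects'' of Definition~\ref{Int-def} we have $(N,N)^* = (N,N)$ on the nose, so $1_{(N,N)}$ already furnishes an isomorphism $(N,N)\cong(N,N)^*$ and $(N,N)$ is self-dual. In fact it is strictly self-dual: every object of the monoidal subcategory of ${\bf Int\C}$ generated by $(N,N)$ has the form $(N^{\otimes k},N^{\otimes k})$ and is fixed by $(\ )^*$, so the dual restricts to a dagger on that subcategory, i.e.\ $(\ )^*$ is a dagger at $(N,N)$. This is exactly the observation that will let Corollary~\ref{applamSS-corol} be applied to objects of this shape (with $N$ self-similar in $\C$, so that $(N,N)$ is moreover reflexive).

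For the converse, let $S\in Ob({\bf Int\C})$ be self-dual; write $S=(X,U)$ and fix an isomorphism $\phi$ from $(X,U)$ to $(X,U)^* = (U,X)$ in ${\bf Int\C}$, with inverse $\psi$. The goal is to exhibit some $N\in Ob(\C)$ together with an isomorphism $(X,U)\cong(N,N)$, and the plan is to take $N:=X$. For this it suffices to produce a $\C$-isomorphism $X\cong U$: applying the faithful functor $R_X$ of Proposition~\ref{semi-cocontra-prop}, which sends $Y\mapsto(X,Y)$, to such an isomorphism gives at once $(X,U)=R_X(U)\cong R_X(X)=(X,X)$, an object of the required form. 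So the task reduces to extracting a $\C$-isomorphism $X\cong U$ from the self-duality datum $(\phi,\psi)$. The first step is to unwind the two equations $\psi\circ\phi=1_{(X,U)}$ and $\phi\circ\psi=1_{(U,X)}$ using the explicit composition of Definition~\ref{Int-def} --- a trace over the ``internal'' object composed with symmetries of $\C$ --- together with the trace axioms and yanking; in the concrete categories of interest (partial injections and their relatives) this pins down cardinalities and forces $X\cong U$, and more generally it should do so whenever the isomorphism classes of objects of $\C$ form a cancellative monoid.

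The main obstacle is precisely this last extraction. An ${\bf Int\C}$-isomorphism is \emph{not} an underlying $\C$-isomorphism: $\phi$ is only a $\C$-morphism $X\otimes X\to U\otimes U$ that becomes invertible once fed back through the trace, so one cannot ``read off'' $X\cong U$ naively, and the argument must stay inside ${\bf Int\C}$, manipulating $\phi$, $\psi$, the symmetries, and the trace via the formulas of Definition~\ref{Int-def} and Lemma~\ref{Intcanonisos-lem}. I expect this is also where a genuine restriction to a ``large class'' of compact closed categories (rather than all of the form ${\bf Int\C}$) enters. The alternative choice $N:=X\otimes U$ --- for which the target $(N,N)=(X,U)\Box(X,U)^*$ is the canonical self-dual object ``$A\Box A^*$'' attached to $S=A$, and one would instead prove $S\cong S\Box S^*$ directly from $\phi$ and the compact closed structure --- meets the same difficulty and looks no easier.
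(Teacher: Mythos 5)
Your forward direction is correct, and it is in fact all the paper itself supplies: the corollary is asserted as ``well-established'' with no proof given, and the intended content of the first half is exactly your observation that $(N,N)^*=(N,N)$ holds on the nose by the object-part of the dual in Definition~\ref{Int-def}, so that $(\ )^*$ is a dagger at $(N,N)$ and Corollary~\ref{applamSS-corol} becomes applicable there.

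The genuine gap is the converse, and you have located it yourself: you never produce the isomorphism $X\cong U$ in $\C$ (nor any substitute), so the second half of the statement remains unproved. It is worth being explicit that this is not merely a hard step left for later --- at the stated level of generality the extraction is impossible, because the converse is false for an arbitrary traced symmetric monoidal category. Take $\C$ to have objects $\mathbb{Z}/2$ with $\otimes$ given by addition mod $2$, only identity arrows, and the forced trace. Then ${\bf Int\C}((a,b),(c,d))=\C(a+d,c+b)$ is a singleton when $a+b=c+d$ and empty otherwise, so $(a,b)\cong(c,d)$ iff $a+b=c+d$; hence $(1,0)$ is self-dual (its dual is $(0,1)$ and $1+0=0+1$), yet there is no arrow at all from $(1,0)$ to any $(N,N)$, since $\C(1+N,N)=\emptyset$. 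Any correct proof therefore needs hypotheses on $\C$ beyond those stated. In the concrete categories the paper actually uses (${\bf pInj}$ and its subcategories) the claim does hold, essentially for the cardinality reasons you gesture at, and there your reduction --- obtain $X\cong U$ in $\C$ and push it through the faithful functor $R_X$ of Proposition~\ref{semi-cocontra-prop} --- would indeed finish the argument; but that cardinality input is precisely the step your proposal defers. Note also that your reduction is only a sufficient condition: $(X,U)\cong(N,N)$ does not require $N$ to be $X$, nor the isomorphism to be in the image of $R_X$, so even in good cases one should not expect the self-duality datum to literally hand you a $\C$-isomorphism $X\cong U$. The honest verdict is that your proposal proves half the corollary, correctly flags where the other half is delicate, and leaves that half open --- and that the statement as printed is too strong without a restriction on $\C$.
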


\begin{corollary}\label{intselfsim-corol}
A self-dual object $(N,N)\in Ob({\bf Int\C})$ is self-similar iff $N\in Ob(\C)$ is self-similar.
\end{corollary}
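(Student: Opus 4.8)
The plan is to restate the claim as an isomorphism between two objects of ${\bf Int}\,\C$ and then attack the two directions by very different means. First I would note, straight from the definition of the tensor of ${\bf Int}\,\C$, that $(N,N)\Box(N,N)=(N\otimes N,\,N\otimes N)$, so that ``$(N,N)$ is self-similar'' is literally ``$(N,N)\cong(N\otimes N,N\otimes N)$ in ${\bf Int}\,\C$'', and also record the decomposition $(N,N)\cong L_I(N)\Box R_I(N)$, which is immediate (up to the unit isomorphisms of $\C$) from the definitions of $\Box$ and of the embeddings $L_I,R_I$ of Proposition \ref{cocontra-prop}. The backward direction is then a single line: $L_I$ and $R_I$ are functors and hence preserve isomorphisms, so an isomorphism $N\cong N\otimes N$ in $\C$ gives $(N,N)\cong L_I(N)\Box R_I(N)\cong L_I(N\otimes N)\Box R_I(N\otimes N)\cong(N\otimes N,N\otimes N)$; the object-indexed functors $L_N,R_N$ of Proposition \ref{semi-cocontra-prop} would do equally well.

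For the forward direction I would take mutually inverse ${\bf Int}\,\C$-arrows $\phi:(N,N)\to(N\otimes N,N\otimes N)$ and $\psi$ in the opposite direction exhibiting the self-similarity; since $(N,N)$ is \emph{strictly} self-dual -- the dual of ${\bf Int}\,\C$ satisfies $(N,N)^{*}=(N,N)$ on the nose, so it is a dagger at $(N,N)$ -- these may by Corollary \ref{applamSS-corol} be taken to be $\code$/$\decode$ arrows, equivalently the $\app$/$\lam$ isomorphisms witnessing reflexivity of $(N,N)$, although the argument will not depend on the choice. Unfolding the two identities $\psi\circ\phi=1_{(N,N)}$ and $\phi\circ\psi=1_{(N\otimes N,N\otimes N)}$ by means of the composition formula of Definition \ref{Int-def} exhibits $\phi$ and $\psi$, now read as $\C$-arrows between tensor powers of $N$, as mutually inverse up to a feedback loop -- a trace over $N\otimes N$ in the first identity and over $N$ in the second. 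The plan is then to collapse those feedback loops using the trace axioms of Definition \ref{Trace-def} (chiefly Yanking and the two Vanishing axioms), naturality, and the explicit symmetry isomorphisms of ${\bf Int}\,\C$ of Lemma \ref{Intcanonisos-lem}, and so to read off from $\phi,\psi$ a genuine pair of mutually inverse $\C$-arrows $N\to N\otimes N$ and $N\otimes N\to N$.

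The main obstacle is precisely this last extraction: an isomorphism in ${\bf Int}\,\C$ between \emph{arbitrary} objects need not reflect to an isomorphism of the underlying objects of $\C$, since the feedback in the composition formula genuinely entangles the two coordinates of an object, so the argument cannot be purely formal. It must exploit that both objects in play are ``diagonal'' (equivalently, strictly self-dual), and the delicate part will be tracking how the symmetry wires in the ${\bf Int}\,\C$-composition interact with the $\code$/$\decode$ data so that the two trace identities \emph{together} pin down an honest $\C$-isomorphism rather than merely a partial-invertibility relation.
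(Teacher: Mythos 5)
Your backward direction is correct and is essentially the paper's argument seen from a different angle: the paper exhibits the witnessing isomorphisms explicitly as $\code\otimes\decode$ and $\decode\otimes\code$, whereas you obtain them by applying functoriality of $L_I$ and $R_I$ to the decomposition $(N,N)\cong L_I(N)\Box R_I(N)$; up to unit isomorphisms these are the same arrows, and either phrasing is fine.

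The forward direction is where the genuine gap lies, and you have in fact located it yourself: everything after ``unfolding the two identities'' is a plan rather than a proof, and the ``delicate part'' you defer is not a technicality but the entire content of the claim. There is no general mechanism in Definition \ref{Trace-def} for removing a feedback loop around an arbitrary wire --- yanking only disposes of loops whose content is a symmetry --- and the two identities you would obtain express only an invertibility-up-to-feedback relation. Indeed the implication is false at this level of generality, so no amount of wire-chasing can complete the plan. Take $\C$ to be the posetal traced symmetric monoidal category $(\N,+,0,\le)$ (which the paper itself observes is traced, with the trace given by subtraction). Then ${\bf Int\C}$ is equivalent to $(\mathbb{Z},\le)$, with $(m,n)\cong(p,q)$ iff $m-n=p-q$; hence $(1,1)\cong(2,2)=(1,1)\Box(1,1)$, so the strictly self-dual object $(1,1)$ is self-similar, yet $1\not\cong 1+1$ in $(\N,\le)$. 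Note that strict self-duality of $(N,N)$, which you hoped to exploit, cannot be the missing ingredient, since $(1,1)$ above is strictly self-dual. (For comparison, the paper's own proof of this direction has the same hole: after rearranging by symmetry it concludes $L_I(N)\Box R_I(N)\cong L_I(N\otimes N)\Box R_I(N\otimes N)$, which merely restates the hypothesis, and faithfulness of $L_I,R_I$ does not reflect an isomorphism between tensor products of their images back down to $\C$.) The honest conclusion is that only the $(\Leftarrow)$ direction holds in general; the $(\Rightarrow)$ direction needs genuinely stronger hypotheses on $\C$ than anything either you or the paper invokes.
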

\begin{proof}\ \\ 
$(\Leftarrow)$
 This is immediate, and well-established (e.g. \cite{PHD}). Given code / decode arrows $\code\in \mathcal C( N\otimes N , N)$ and $\decode \in \mathcal C(N ,N\otimes N)$, the self-similarity of $(N,N)\in Ob({\bf Int\mathcal C})$ is exhibited by 
\[ (\code \otimes \decode) \in {\bf Int\C}((N,N)\Box(N,N),(N,N)) \ \ \mbox{ and } \ \ (\decode \otimes \code) \in {\bf Int\C}( (N,N),(N,N)\Box(N,N)) \]
$(\Rightarrow)$  Consider some $(N,N)\cong (N,N)\Box(N,N)$. then 
\[ (N,N)\ \cong \ (N,I)\Box(I,N) \ \cong \ \left( (N,I)\Box(I,N)\right) \Box \left((N,I)\Box(I,N) \right) \]
As the tensor of $\bf Int\C$ is symmetric, this implies 
\[  (N,I)\Box(I,N)\ \cong \  \left( (N,I)\Box(N,I)\right) \Box \left((I,N)\Box(I,N) \right) \ \cong \ (N\otimes N,I)\Box(I,N\otimes N) \]
and our result follows as the functors $L_I,R_I: \C \rightarrow {\bf Int\C}$ are faithful.
\end{proof}

As a corollary of the above two results, we observe that is is straightforward, at least in principle,  to exhibit strictly reflexive objects of compact closed categories derived from the $\bf Int$ construction.
\begin{corollary}\label{ssIntss-corol}
Let $N\in Ob(\C)$ be a strictly self-similar object of a symmetric traced monoidal category. Then $(N,N)\in Ob({\bf Int\C})$ is both strictly self-dual and strictly self-similar, and hence strictly reflexive.
\end{corollary}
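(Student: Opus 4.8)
The plan is to verify the two constituents of reflexivity — self-duality and self-similarity — in their strict forms separately for $(N,N)$, each by unwinding the definitions of the $\bf Int$ construction, and then to read off strict reflexivity using Lemma \ref{reflexchar-lem} and Corollary \ref{applamSS-corol}. First I would record exactly what the hypothesis supplies: strict self-similarity of $N$ means $N\otimes N = N$ as objects of $\C$, and that the code / decode bijections satisfy $\code = \decode = 1_N$.

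Strict self-duality of $(N,N)$ is then immediate. The dual on objects in ${\bf Int\C}$ is $(X,U)^* = (U,X)$, so $(N,N)^* = (N,N)$; equivalently, $(\ )^*$ is a dagger at $(N,N)$. Since moreover $N\otimes N = N$, the monoidal subcategory of ${\bf Int\C}$ generated by $(N,N)$ has only the objects $(I,I)$ and $(N,N)$, on both of which $(\ )^*$ acts as the identity, in accordance with Corollary \ref{twoobjects-corol}.

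For strict self-similarity of $(N,N)$: on objects, $(N,N)\Box(N,N) = (N\otimes N , N\otimes N) = (N,N)$ using $N\otimes N = N$. For the structure arrows I would invoke the explicit code / decode arrows written down in the proof of Corollary \ref{intselfsim-corol}, namely the ${\bf Int\C}$-arrows $\code\otimes\decode$ and $\decode\otimes\code$; substituting $\code = \decode = 1_N$ turns each into $1_N\otimes 1_N$, which by functoriality of $\otimes$ is $1_{N\otimes N}$, and since $N\otimes N = N$ this is $1_N$ — precisely the identity $1_{(N,N)} = 1_N\otimes 1_N$ of ${\bf Int\C}$. Hence the code / decode arrows of $(N,N)$ are identities, so $(N,N)$ is strictly self-similar. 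Finally, because $(N,N)$ is strictly self-dual its internal hom is $[(N,N)\rightarrow(N,N)] = (N,N)^*\Box(N,N) = (N,N)\Box(N,N) = (N,N)$; Lemma \ref{reflexchar-lem} then gives that $(N,N)$ is reflexive, and Corollary \ref{applamSS-corol} identifies the $\app$ and $\lam$ isomorphisms with code / decode arrows, which we have just shown may be taken to be $1_{(N,N)}$. Hence $(N,N)$ is strictly reflexive.

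I do not expect a genuine obstacle — the result really is a corollary — but the step requiring care is the bookkeeping with the conventions of the $\bf Int$ construction: after the object identifications one must check that the sources and targets of all the arrows in question actually coincide, and one must be careful that strictness of $N$'s self-similarity (an equation among objects and arrows of $\C$) transports to ${\bf Int\C}$ without being conflated with strict associativity of $\Box$. Indeed, by Proposition \ref{NSS-prop} the tensor on the subcategory generated by a non-unit strictly self-similar object is necessarily non-strict, so strict self-similarity of $(N,N)$ must coexist with a non-trivial associator which, by Theorem \ref{Fassoc-thm}, generates a copy of Thompson's $\mathcal F$. Making such index-chasing routine is exactly the purpose of the coherence results of \cite{JHRS,PHD} invoked above.
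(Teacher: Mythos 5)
Your proposal is correct and follows essentially the same route as the paper: strict self-duality is read off immediately from $(N,N)^*=(N,N)$, and strict self-similarity is obtained by substituting $\code=1_N=1_{N\otimes N}=\decode$ into the code/decode arrows $\code\otimes\decode$ and $\decode\otimes\code$ of Corollary \ref{intselfsim-corol}, which then become the identity $1_{(N,N)}=1_N\otimes 1_N$. The extra remarks on source/target bookkeeping and on not conflating strict self-similarity with strict associativity are sensible but not needed beyond what the paper records.
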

\begin{proof} Strict self-duality is immediate.  The self-similarity of $(N,N)$ is, by Corollary \ref{intselfsim-corol}, exhibited by 
\[ (\code \otimes \decode) \in {\bf Int\C}((N,N)\Box(N,N),(N,N)) \ \ \mbox{ and } \ \ (\decode \otimes \code) \in {\bf Int\C}( (N,N),(NN)\Box(N,N)) \]
However, when $\code = 1_N=1_{N\otimes N}=\decode$ these are both identity maps, so $(N,N)$ is strictly self-similar.
\end{proof}
The above result is not as helpful as it first appears;  it is not immediate how to find strictly self-similar objects of traced monoidal categories; we do not have a consistent method of strictifying self-similarity in  monoidal (rather than semi-monoidal) setting, so some work remains before we can achieve our goal of strictifying reflexivity in compact closed categories.

\subsection{A naming of parts}\label{names-sect}
A useful aspect of the $\bf Int$ construction is that it simply creates names (\& indeed elements generally) {\em ex nihilo}.  Less dramatically, we observe that even when the underlying traced monoidal category $(\C ,\otimes)$ has no elements, the category $\bf Int\C$ is, by construction, fully equipped with names for all its arrows (\& hence a wide range of elements).

\begin{lemma}\label{corresponding-lem}Let $(\mathcal C ,\otimes ,\sigma , I , Tr_{\_,\_}^{\_} )$ be a traced symmetric monoidal category. Then :  
\begin{enumerate}
\item For arbitrary  $(U,V)\in Ob({\bf Int\C})$,  the elements of $(U,V)$ are in 1:1 correspondence with the homset $\C(V,U)$.
\item For arbitrary $X\in Ob(\C)$ , the following are in 1:1 correspondence:
\begin{itemize}
\item elements of $X\in Ob(\C)$,
\item elements of $L_I(X)\in Ob({\bf Int\C})$, 
\item elements of $R_I(X)\in Ob({\bf Int\C})$.
\end{itemize}
\item For an arbitrary self-dual object $(N,N)\in Ob({\bf Int\C})$, the elements of $(N,N)$ are in 1:1 correspondence with the endomorphism monoid $\C(N,N)$.
\end{enumerate}
\end{lemma}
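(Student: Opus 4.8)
Since all three parts are at bottom just a matter of unwinding the definition of $\bf Int\C$, the plan is to prove Part~1 in full and then obtain Parts~2 and~3 as immediate instances of it. For Part~1, recall that an \emph{element} of $(U,V)\in Ob({\bf Int\C})$ is by definition a morphism from the unit object $(I,I)$ to $(U,V)$ in $\bf Int\C$. Substituting $(X,U):=(I,I)$ and $(Y,V):=(U,V)$ into the defining homset identity ${\bf Int\C}((X,U),(Y,V))=\C(X\otimes V,Y\otimes U)$ gives ${\bf Int\C}((I,I),(U,V))=\C(I\otimes V,\,U\otimes I)$. Now pre-composing with the inverse of the left unit isomorphism $\lambda_V:I\otimes V\to V$ of $\C$ and post-composing with the right unit isomorphism $\rho_U:U\otimes I\to U$, the assignment $f\mapsto \rho_U\,f\,\lambda_V^{-1}$ is a bijection $\C(I\otimes V,U\otimes I)\to\C(V,U)$, with inverse $g\mapsto\rho_U^{-1}\,g\,\lambda_V$; this is exactly Part~1. (The symmetry $\sigma$ plays no part here — one could symmetrise the formula with it, but the bijection is already canonical since the unitors are — and no naturality is required for the bare statement.)

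Parts~2 and~3 are then Part~1 read off at particular objects. For Part~3, take $(U,V):=(N,N)$: elements of $(N,N)$ are in bijection with $\C(N,N)$, the underlying set of the endomorphism monoid of $N$; Corollary~\ref{selfdualdagger-corol} (every self-dual object of $\bf Int\C$ is isomorphic to one of the form $(N,N)$) then extends the statement to all self-dual objects. For Part~2, take $(U,V):=(X,I)=L_I(X)$ to recover elements of $L_I(X)$ in bijection with $\C(I,X)$, i.e.\ with the elements of $X$; concretely this is the bijection induced by the faithful covariant embedding $L_I$ of Proposition~\ref{cocontra-prop}, which sends an element $a:I\to X$ to $a\otimes 1_I$. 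The object $R_I(X)=(I,X)=L_I(X)^*$ is handled the same way, now through the faithful \emph{contra}variant embedding $R_I$ of the same proposition; here the one subtlety is that $R_I$ reverses direction — it sends $a:I\to X$ to $1_I\otimes a$, which lands on the opposite side of the hom — so the matching of homsets has to be done via the duality $R_I(X)^*=L_I(X)$. Keeping this orientation straight is the single place in the whole argument where a slip is easy.

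I therefore do not expect a genuine mathematical obstacle: the lemma is a bookkeeping consequence of the $\bf Int$ homset convention together with the unit coherence isomorphisms of $\C$ — and its point is precisely that these homsets (hence ``elements'') exist in $\bf Int\C$ regardless of whether $\C$ itself has any elements. The things worth double-checking are purely clerical: keeping the two coordinates of an $\bf Int$-object apart (the first covariant, the second contravariant), not quietly invoking a strictification or a naturality that the statement does not claim, and orienting $R_I$ correctly in Part~2. If naturality of the correspondences (in $(U,V)$, in $X$, in $N$) is wanted downstream, it follows from naturality of the unitors, but I would not include it unless it is actually used.
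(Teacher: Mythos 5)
Your Part~1 and Part~3, and the $L_I$ half of Part~2, are exactly the paper's argument: unwind the homset convention ${\bf Int\C}((X,U),(Y,V))=\C(X\otimes V,Y\otimes U)$ at source $(I,I)$ and absorb the unitors. Nothing to add there.

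The $R_I$ bullet of Part~2 is a genuine gap, and you put your finger on precisely the right spot before stepping into it. Apply your own Part~1 honestly to $(U,V)=(I,X)$: elements of $(I,X)$ are $\C(V,U)=\C(X,I)$ --- the \emph{co-elements} of $X$, not its elements. The escape route you propose, ``match the homsets via the duality $R_I(X)^*=L_I(X)$'', does not close this: the duality gives ${\bf Int\C}(A,B)\cong{\bf Int\C}(B^*,A^*)$, so it identifies elements of $(I,X)$ with co-elements of $(X,I)$, i.e.\ with $\C(X\otimes I,I\otimes I)\cong\C(X,I)$ again. There is no canonical map $\C(X,I)\to\C(I,X)$ in a bare traced symmetric monoidal category, and the bullet is in fact false in general: in the free traced symmetric monoidal category on one object $X$ and one generator $a:I\to X$, a port-counting argument on string diagrams shows $\C(X,I)=\emptyset$ while $a\in\C(I,X)$, so $R_I(X)$ has no elements although $X$ does. (The paper's own one-line proof silently swaps source and target, writing $\C(I\otimes I,I\otimes X)$ where Definition~\ref{Int-def} gives $\C(I\otimes X,I\otimes I)$, so you should not feel bad about stumbling here.) The correct statement for the third bullet is that elements of $R_I(X)$ correspond to \emph{co-}elements $\C(X,I)$; the bullet as written needs an extra hypothesis such as a dagger on $\C$, or --- as in the only case the paper later uses, $({\bf pInj},\uplus)$ --- the degenerate situation where $\C(I,X)$ and $\C(X,I)$ are both singletons. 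Parts~1 and~3 are unaffected.
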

\begin{proof}\ \\
\begin{enumerate}
\item From the definition of homsets, ${\bf Int\C}((I,I),(U,V))  \stackrel{def.}{=} \C (I\otimes V,U\otimes I) \cong \C(V,U)$.
\item As a special case of 1., ${\bf Int\C}((I,I),L_I(X)) \stackrel{def.}{=} \C (I\otimes I,X\otimes I) \cong \C(I,X)$ and ${\bf Int\C}((I,I),R_I(X))  \stackrel{def.}{=}  \C (I\otimes I,I\otimes X)  \cong \C(I,X)$.
\item This is again a special case of 1.
\end{enumerate}
\end{proof}

\section{Strictifying reflexivity in a compact closed category}\label{strictify-sect}
The observations of Lemma \ref{corresponding-lem}, although straightforward, provide a route to the monoidal equivalences of compact closed categories we need in order to give a monoidal strictification of reflexivity.  The following preliminary results are needed:
\begin{proposition}\label{keyequivalences-prop}
Let $(\C,\otimes)$ be a symmetric traced monoidal category with trivial scalars. Then 
\begin{enumerate}
\item The de-elemented version $(\C,\otimes)_{-\eafw}$ is also traced.
\item For arbitrary non-unit $N\cong N \otimes N\in Ob(\C)$ there is a $\dagger$ - isomorphism\footnote{The terminology  `isomorphic' rather than `equivalent' is deliberate; these are both small categories, and there is a bijection of sets of objects as well as homsets.} of $\dagger$-compact closed categories between 
\begin{enumerate}
\item The monoidal subcategory of $\bf Int\C$ generated by $(N,N)$.
\item The  monoidal subcategory of $\bf Int\left(\C_{-\eafw}\right)$ generated by $(N,N)$. 
\end{enumerate}
\item When $N$ is self-similar, the  monoidal subcategory of ${\bf Int}((\C(N,N),\star)_{+I})$ generated by $(N,N)$ is $\dagger$ monoidal equivalent to a. and b. above.
\end{enumerate}
\end{proposition}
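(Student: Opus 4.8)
The plan is to establish the three parts in order, since each builds on the previous. For part 1, I would check the trace axioms for the restriction of $Tr$ to the de-elemented category $(\C,\otimes)_{-\eafw}$. By Remark \ref{V1-rem}, since $(\C,\otimes)$ has trivial scalars, the trace is already uniquely determined by its action on non-unit objects, so the de-element functor (which deletes elements but keeps all non-unit objects and all homsets between them) does not disturb the data needed to define a trace; Vanishing I is the only axiom mentioning the unit, and it holds trivially on $(\C,\otimes)_{-\eafw}$ because $(\C,\otimes)_{-\eafw}$ has, by construction, a strict unit adjoined, with the trace extended by strictness exactly as the original trace on $\C$ behaves relative to its own (trivial) unit. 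The remaining axioms (Vanishing II, Yanking, Superposing) involve only the tensor, symmetry and composition, all of which are preserved verbatim, so they transfer without incident.

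For part 2, I would exhibit an explicit functor between the two monoidal subcategories generated by $(N,N)$. The objects of either subcategory are the finite $\Box$-powers of $(N,N)$; by the definition of $\Box$ these are pairs $(N^{\otimes k}, N^{\otimes k})$ built from non-unit objects of $\C$ (together with the unit $(I,I)$), so they lie in both $\bf Int\C$ and $\bf Int(\C_{-\eafw})$ and the object-assignment is the identity bijection. On arrows, $\bf Int\C((N^{\otimes j},N^{\otimes j}),(N^{\otimes k},N^{\otimes k})) = \C(N^{\otimes j}\otimes N^{\otimes k}, N^{\otimes k}\otimes N^{\otimes j})$, which is a homset between non-unit objects of $\C$ and hence is literally the same set as the corresponding homset of $\C_{-\eafw}$; composition, identities, tensor $\Box$, the dual $(\ )^* = (U,X)$ on $(X,U)$, and the unit/co-unit arrows of Definition \ref{Int-def} are all written using only $\otimes$, $\sigma$, $\tau$, $Tr$ and composition of $\C$ restricted to these objects, so they agree on the nose. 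Thus the functor is the identity on objects and homsets, bijective, and $\dagger$-preserving (the dagger is the dual of $\bf Int$, which by construction is identity-on-objects on these self-dual objects), giving the claimed $\dagger$-isomorphism of $\dagger$-compact closed categories.

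For part 3, I would combine the $\bf Int$ construction with Theorem \ref{ss-coh-thm}. When $N$ is self-similar, Theorem \ref{ss-coh-thm}(2) gives a semi-monoidal equivalence between the semi-monoidal subcategory of $(\C,\otimes)$ generated by $N$ and the semi-monoidal monoid $(\C(N,N),\star)$; applying $(\ )_{+I}$ and recalling that $(\C,\otimes)_{-\eafw} = ((\C,\otimes)_{-I})_{+I}$, Corollary \ref{eafw-equiv-corol} upgrades this to a monoidal equivalence between $(\C(N,N),\star)_{+I}$ and the de-elemented monogenic monoidal subcategory of $\C$ generated by $N$ — which, since $(\C,\otimes)$ already has trivial scalars, is essentially $\C_{-\eafw}$ restricted to $N$. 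Now $\bf Int$ is functorial in monoidal (indeed traced monoidal) functors and preserves monoidal equivalences, and it takes a $\dagger$ structure to a $\dagger$ structure; applying $\bf Int$ to this equivalence and restricting to the subcategory generated by $(N,N)$ yields a $\dagger$ monoidal equivalence between the subcategory of ${\bf Int}((\C(N,N),\star)_{+I})$ generated by $(N,N)$ and the subcategory of ${\bf Int}(\C_{-\eafw})$ generated by $(N,N)$, which by part 2 is $\dagger$-isomorphic to the subcategory of $\bf Int\C$ generated by $(N,N)$. (One should check that $(\C(N,N),\star)_{+I}$ is genuinely traced so that $\bf Int$ applies — this follows from part 1 applied to $(\C(N,N),\star)_{+I}$, noting it has trivial scalars by construction of $(\ )_{+I}$ — and that the self-similar object of $\C$ generated by $N$ is carried to the unique object of $\C(N,N)$, so that $(N,N)$ corresponds correctly on the $\bf Int$ side.)

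The main obstacle I anticipate is part 3: specifically, verifying that $\bf Int$ genuinely respects the semi-monoidal-to-monoidal bookkeeping here — that the equivalence produced by Theorem \ref{ss-coh-thm} and Corollary \ref{eafw-equiv-corol} lands in a category to which $\bf Int$ can be applied, that $\bf Int$ of a monoidal equivalence is again a monoidal equivalence (this requires the trace to be preserved, which is where Proposition \ref{cocontra-prop}-style functoriality of $\bf Int$ is used), and that the $\dagger$ is tracked throughout. Parts 1 and 2 are essentially bookkeeping exercises, hinging entirely on the observation that de-elementing touches nothing but the unit object and its elements, leaving every homset between non-unit objects — and hence the entire $\bf Int$ construction restricted to objects of the form $(N^{\otimes k},N^{\otimes k})$ — completely untouched.
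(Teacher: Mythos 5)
Your proposal is correct and follows essentially the same route as the paper: part 1 via the Vanishing I / trivial-scalars observation, part 2 via the fact that de-elementing leaves all homsets between non-unit objects (and hence the $(N,N)$-generated subcategory of $\bf Int\C$) untouched, and part 3 via Corollary \ref{eafw-equiv-corol} together with the fact that $\bf Int$ preserves monoidal equivalences. The one point to tighten is your justification that $(\C(N,N),\star)_{+I}$ is traced: ``part 1 applied to $(\C(N,N),\star)_{+I}$'' is circular as stated (part 1 presupposes a traced category), and the paper instead exhibits the trace explicitly as $trace(f)=Tr^N_{N,N}(\decode f\code)$, which satisfies all axioms except Vanishing I and then extends uniquely by triviality of the scalars --- though your alternative of transporting the trace across the monoidal equivalence of Corollary \ref{eafw-equiv-corol} also works.
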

\begin{proof} \ \\
\begin{enumerate}
\item This is a straightforward consequence of the Vanishing 1 axiom; as observed in  Remark \ref{V1-rem}, tracing out the unit object is the identity on homsets. Conversely, as there is only one abstract scalar of $(\C,\otimes)_{-\eafw}$, traces of the form $Tr_{I,I}^U(\  )$ are also uniquely determined.
\item Parts a. and b. follows from   Part 3. of Lemma \ref{corresponding-lem} above; the only homsets of either of these compact closed categories determined by elements of the underlying traced category  are the abstract scalars, which are trivial in both cases.  It makes no difference whether or not we start with the `de-elemented' version.
\item This follows from the monoidal equivalence of categories noted in Corollary \ref{eafw-equiv-corol} (and indeed the fact that the $\bf Int$ construction is not `evil'; given monoidally equivalent traced monoidal categories $\C$ and $\mathcal D$, there exists a monoidal equivalence of categories between $\bf Int(\C)$ and $\bf Int(\mathcal D)$).  

We may also exhibit the trace  on $(\C(N,N),\star)_{+I}$ explicitly; in \cite{PHD} it is observed that for a self-similar object $N$ of a symmetric traced monoidal category, there exists an operation $trace$ on $(\C(N,N),\star)$ given by 
\[ trace(f)\ =\  Tr_{N,N}^N(\decode f \code) \ \ \forall f\in \C(N,N) \]
that `satisfies all of the axioms of \cite{JSV} apart from Vanishing I'.   As $\C$ has trivial scalars, this extends uniquely (as described in Remark \ref{V1-rem}) to a categorical trace on the symmetric monoidal category $(\C(N,N),\star)_{+I}$, and coincides, up to monoidal equivalence, with the trace on the subcategory of $\C$ monoidally generated by $N$. 
\end{enumerate}
\end{proof}

This now gives the desired monoidal strictification of extensional reflexivity. 
\begin{theorem}\label{strictproc-thm}
Let $\left(\C,\otimes ,\sigma, I, Tr(\ )\right)$ be a symmetric traced monoidal category with trivial scalars, and let $X\in Ob({\bf Int\C})$ be a reflexive object. Then there exists a monoidal strictification of the reflexivity of $X$.
\end{theorem}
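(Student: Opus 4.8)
The plan is to assemble the structural results of the preceding sections so as to replace $X$ by an isomorphic object of the form $(N,N)$, and then to transport the situation — through a chain of \emph{monoidal} equivalences — to a category in which reflexivity is strict for the trivial reason isolated in Corollary~\ref{ssIntss-corol}.

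First I would unpack the hypothesis. Since $X$ is a reflexive object of the compact closed category ${\bf Int}\,\C$, Lemma~\ref{reflexchar-lem} says that it is self-dual and self-similar; we may assume $X$ is not (isomorphic to) the unit object, since otherwise the strictification is trivial. By Corollary~\ref{selfdualdagger-corol} there is a (necessarily non-unit) object $N\in Ob(\C)$ with $X\cong (N,N)$, and Corollary~\ref{intselfsim-corol} then forces $N$ to be self-similar in $\C$. Replacing $X$ by the isomorphic $(N,N)$ is harmless for strictification — this is exactly the principle-of-equivalence caveat built into Definition~\ref{strictref-def} — and $(N,N)$ has the advantage of being \emph{strictly} self-dual in ${\bf Int}\,\C$, since $(N,N)^*=(N,N)$ by the very definition of the dual on objects. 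I would then take $\mathcal S$ to be the small monoidal subcategory of ${\bf Int}\,\C$ generated by $(N,N)$; by Corollary~\ref{twoobjects-corol} it is compact closed, it contains $X$, and the inclusion $\Gamma:\mathcal S\hookrightarrow {\bf Int}\,\C$ is a faithful monoidal functor, which supplies the first two data required by Definition~\ref{strictref-def}.

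For the remaining datum I would invoke Proposition~\ref{keyequivalences-prop}: as $\C$ has trivial scalars and $N$ is a self-similar non-unit object, Parts 2 and 3 provide a $\dagger$-isomorphism followed by a $\dagger$-monoidal equivalence from $\mathcal S$ to the monoidal subcategory $\mathcal D$ of ${\bf Int}\big((\C(N,N),\star)_{+I}\big)$ generated by the self-dual object corresponding to $(N,N)$. The pivotal observation is that the unique non-unit object $n$ of the semi-monoidal monoid $(\C(N,N),\star)_{+I}$ is \emph{strictly} self-similar: being the sole object of a one-object category it satisfies $n\star n=n$ literally, with identity code/decode arrows, even though $\star$ itself is only associative up to the Thompson-group isomorphisms of Theorem~\ref{Fassoc-thm}. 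Hence Corollary~\ref{ssIntss-corol} applies verbatim inside ${\bf Int}\big((\C(N,N),\star)_{+I}\big)$ and shows that the generating object of $\mathcal D$ is strictly reflexive. Putting the pieces together, $\mathcal S$ is a small compact closed (hence closed) subcategory of ${\bf Int}\,\C$ containing $X$ with a faithful monoidal inclusion, and $\mathcal D$ is a compact closed category monoidally equivalent to $\mathcal S$ in which the reflexivity of the image of $X$ is exhibited by identity arrows; this is precisely a monoidal strictification of the reflexivity of $X$.

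I expect no individual step to be genuinely hard, since the technical weight has been front-loaded into the coherence theorem for self-similarity (Theorem~\ref{ss-coh-thm}), Corollary~\ref{eafw-equiv-corol}, and Proposition~\ref{keyequivalences-prop}. The one point that really carries the argument — and the place I would take most care — is the interplay of two moves: passing to the endomorphism monoid $\C(N,N)$ converts generally non-strict self-similarity into \emph{strict} self-similarity (because endomorphism monoids of self-similar objects are semi-monoidal monoids), while re-applying the ${\bf Int}$ construction simultaneously restores a genuine unit object $(I,I)$ together with all the names and co-names needed to keep every equivalence in sight \emph{monoidal} rather than merely semi-monoidal.
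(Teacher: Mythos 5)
Your proposal is correct and follows essentially the same route as the paper's own proof: reduce $X$ to the strictly self-dual $(N,N)$ via Lemma~\ref{reflexchar-lem}, Corollary~\ref{selfdualdagger-corol} and Corollary~\ref{intselfsim-corol}, then obtain the strictification from the monoidal subcategory of ${\bf Int}\big((\C(N,N),\star)_{+I}\big)$ generated by $(N,N)$ using Proposition~\ref{keyequivalences-prop}. You merely make explicit two steps the paper leaves implicit — that the generated subcategory $\mathcal S$ supplies the first two data of Definition~\ref{strictref-def}, and that strict reflexivity in the target follows from Corollary~\ref{ssIntss-corol} applied to the strictly self-similar unique non-unit object of $(\C(N,N),\star)_{+I}$.
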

\begin{proof}
By Lemma \ref{reflexchar-lem}, $X$ is self-dual, and from Corollary \ref{selfdualdagger-corol} it is therefore isomorphic to $(N,N)\in Ob(\bf Int\C)$ for some $N\in Ob(\C)$. Without loss of generality, we therefore work with the strictly self-dual reflexive object $(N,N)\in Ob({\bf Int\C})$.   As $X$ is self-similar, $N$ is therefore a self-similar object of $(\C ,\otimes)$, by Corollary \ref{intselfsim-corol}. 

By Proposition \ref{keyequivalences-prop} above, the monoidal subcategory of ${\bf Int}((\C(N,N),\star)_{+I})$ generated by $(N,N)$ then gives a monoidal strictification of this reflexivity, as axiomatised  in Definition \ref{strictref-def}.
\end{proof}

\subsection{Discussion}\label{strictdisc-sect}
We have exhibited a process that will strictify the reflexivity of a large class -- although not all -- reflexive objects in compact closed categories. 
There are several notable points.
\begin{enumerate}
\item 
The two restrictions on this process are : 
\begin{enumerate}
\item The compact closed category itself needs to arise from applying the $\bf Int$ or $\bf GoI$ construction, to a traced monoidal category.
\item The underlying traced monoidal category (and hence the compact closed category itself) must have trivial scalars. 
\end{enumerate} 
The first condition rules out compact closed categories such as finite-dimensional Hilbert spaces with tensor product (which does not, in any case, have reflexive objects), or  relations with Cartesian product (which certainly does have reflexive objects).  However, it is satisfied by the categories on which the original Geometry of Interaction system was based.

The second condition is also satisfied by the setting of the original Geometry of Interaction system that we will analyse in Section \ref{concrete-sect} onwards. It is harder to think of `natural' examples that are ruled out by this restriction, although  we may certainly construct examples by using the techniques of \cite{SAAS} to `adjoin' a non-trivial involutive monoid of abstract scalars. This is discussed further in Section \ref{fd-sect}.
\item The object $(N,N)$ of the compact closed category ${\bf Int}\left(  (\C(N,N),\star )_{+I} \right)$ is strictly self-similar, and therefore its endomorphism monoid is a semi-monoidal monoid. At this point, it is tempting to axiomatise away all the problems of Section \ref{ccmprobs-sect}, and simply {\em define} a `compact closed monoid' to be the result of such a process.   This would be inaccurate, and on a par with simply defining a compact closed category to be some subcategory of one that arises from (a restricted version of) the $\bf Int$ or $\bf GoI$ construction. 
\item\label{samehomsets-pt} A curiosity of this setting is that the endomorphism monoid of $N$ in the traced monoidal category $(\C(N,N),\star )_{+I}$ has the same underlying set as the endomorphism monoid of $(N,N)$ in the compact closed category ${\bf Int}\left(  \C(N,N),\star )_{+I} \right)$, since $N$ is {\em strictly} self-similar.  This is a very useful property when we come to study  such endomorphism monoids from a more algebraic setting.   
\end{enumerate}

\begin{conjecture}
The restrictions of Point 1. above are not essential, and reflexivity may be strictified for arbitrary reflexive objects in compact closed categories.\end{conjecture}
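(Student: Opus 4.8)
The strategy for attacking this conjecture is to remove the two hypotheses of Theorem \ref{strictproc-thm} in turn. The first hypothesis --- that the compact closed category arise from the $\bf Int$ construction --- should be removable essentially for free, because every compact closed category already is of that form up to monoidal equivalence. A compact closed category $\mathcal D$ is canonically traced, and the comparison functor ${\bf Int}(\mathcal D) \rightarrow \mathcal D$, $(X,U) \mapsto X \otimes U^*$, is an equivalence of compact closed categories: it is essentially surjective since $Z \cong Z \otimes I^*$, and full and faithful since compact closure gives ${\mathcal D}(X \otimes U^*, Y \otimes V^*) \cong {\mathcal D}(X \otimes V, Y \otimes U) = {\bf Int}(\mathcal D)((X,U),(Y,V))$. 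Hence every compact closed category is, up to monoidal equivalence, of the form ${\bf Int}(\C)$; a reflexive object $R$ of $\mathcal D$ transports along the equivalence to a reflexive object of ${\bf Int}(\mathcal D)$, and a strictification of the latter --- post-composed with the comparison functor --- is a strictification of $R$ in the sense of Definition \ref{strictref-def}. So it suffices to treat reflexive objects of ${\bf Int}(\C)$ for \emph{arbitrary} traced symmetric monoidal $\C$.

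The second hypothesis --- trivial scalars --- carries the real content. By Lemma \ref{reflexchar-lem} and Corollary \ref{selfdualdagger-corol} we may take the reflexive object to be $(N,N)$ with $N \cong N \otimes N$ self-similar in $\C$, and the obstruction is that the subcategory of ${\bf Int}(\C)$ generated by $(N,N)$ carries the (possibly non-trivial, involutive) monoid of abstract scalars $S = \C(I,I)$, whereas the strictified model ${\bf Int}((\C(N,N),\star)_{+I})$ of Theorem \ref{strictproc-thm} has only trivial scalars, so the two cannot be monoidally equivalent as required. The plan is to replace the ``adjoin a strict trivial unit'' functor $(\ )_{+I}$ with a scalar-sensitive variant built from the ``adjoining scalars'' construction of \cite{SAAS}: from the semi-monoidal monoid $(\C(N,N),\star)$ and the involutive monoid $S$ one builds a small $\dagger$-symmetric monoidal category $\mathcal U$ on objects $\{I, N\}$ with $\mathcal U(I,I) = S$, $I \otimes I = I$, $N \otimes N$ given by $\star$, cross-homsets $\mathcal U(I,N) = \mathcal U(N,I)$ set equal to $\C(N,N)$ (matching ${\bf Int}(\C)((I,I),(N,N)) = \C(N,N)$), and trace transported from the operation $f \mapsto Tr^N_{N,N}(\decode\, f\, \code)$ of \cite{PHD}. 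One then shows, via a scalar-relative version of Corollary \ref{eafw-equiv-corol} together with the fact that ${\bf Int}$ preserves monoidal equivalences, that the subcategory of ${\bf Int}(\mathcal U)$ generated by $(N,N)$ is $\dagger$ monoidally equivalent to the subcategory of ${\bf Int}(\C)$ generated by $(N,N)$, with reflexivity of $(N,N)$ exhibited by identity arrows exactly as in Corollary \ref{ssIntss-corol}.

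The main obstacle is precisely this last reduction: establishing that the coherence and strictification theorem of \cite{JHRS}, which lives in ${\bf SemiMonCat}$ and is blind to scalars, can be ``twisted'' by a prescribed involutive scalar monoid $S$ without breaking the equivalence. Concretely, one must verify that the diagrammatic normal-form arguments of \cite{JHRS} survive when every generating cell may carry an $S$-phase, and that the resulting homset bijections are natural in a way compatible with the dagger --- this is exactly where one re-uses, or must re-prove, the technical core of \cite{SAAS}. A fallback, should no sufficiently general scalar-relative coherence theorem be available, is to show instead that $\C$ is traced-monoidally equivalent to a scalar extension $\C_0[S]$ of a traced category $\C_0$ with trivial scalars, apply Theorem \ref{strictproc-thm} verbatim inside ${\bf Int}(\C_0)$, and re-adjoin $S$; but whether an arbitrary traced category factors through its scalar monoid in this way is itself unclear, and can be expected to fail when the scalars do not act freely. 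In either case one finishes with the routine check that the faithful functor and the equivalence so produced genuinely satisfy all three clauses of Definition \ref{strictref-def}, preserving the unit object, elements, names, co-names, and monoidal well-pointedness.
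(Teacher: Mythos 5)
This statement is a \emph{conjecture} in the paper: it is explicitly left open, and the Future Directions section sketches only an intention to attack it via the scalar-adjunction methods of \cite{SAAS}. Your proposal is therefore not being measured against an existing proof, and it is, candidly, a research plan rather than a proof. Your first reduction is sound and worth stating: every compact closed category $\mathcal D$ is canonically traced and the comparison functor ${\bf Int}(\mathcal D)\rightarrow\mathcal D$, $(X,U)\mapsto X\otimes U^*$, is a monoidal equivalence, so restriction (a) of Theorem \ref{strictproc-thm} is indeed removable for free (modulo transporting the reflexive object and checking the comparison functor respects the closed structure, which it does). You also correctly order the reductions, since applying ${\bf Int}$ to $\mathcal D$ itself leaves you with the scalar monoid $\mathcal D(I,I)$ intact, so nothing is gained on restriction (b).

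The genuine gap is the one you yourself flag. The trivial-scalar hypothesis is not incidental to the existing argument: Proposition \ref{keyequivalences-prop}(2) works precisely because ``the only homsets determined by elements of the underlying traced category are the abstract scalars, which are trivial in both cases.'' With $S=\C(I,I)$ non-trivial, the subcategory of ${\bf Int}\C$ generated by $(N,N)$ and the subcategory of ${\bf Int}(\C_{-\eafw})$ generated by $(N,N)$ already have different scalar monoids, so the chain of $\dagger$-isomorphisms collapses at its first link, and the strictified model ${\bf Int}((\C(N,N),\star)_{+I})$ cannot be monoidally equivalent to the original subcategory. Your proposed repair --- a two-object category $\mathcal U$ with $\mathcal U(I,I)=S$ and $\mathcal U(I,N)=\C(N,N)$ decreed by hand --- leaves unverified exactly the load-bearing steps: that composition $\mathcal U(N,N)\times\mathcal U(I,N)\rightarrow\mathcal U(I,N)$ and the $S$-action on every homset can be defined compatibly with the tensor, trace, and dagger; and that the coherence theorem of \cite{JHRS}, which is blind to scalars, survives this twisting. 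Your fallback (factor $\C$ as a scalar extension $\C_0[S]$) is, as you note, expected to fail when $S$ does not act freely, so it cannot serve as a general argument. Until a scalar-relative strictification theorem is actually proved, the conjecture remains open; what you have is a credible reduction of the conjecture to that single technical statement, which is progress but not a proof.
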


\section{Properties of strictly reflexive objects}
We now move on from demonstrating that -- at least in certain cases -- reflexivity may be strictified, and consider the structure of strictly reflexive objects in compact closed categories more generally.  Our setting includes, but is not restricted to, objects derived from the strictification of reflexivity given in Theorem \ref{strictproc-thm}.

\subsection{Notation \& an algebraic perspective} 
As observed in Point \ref{samehomsets-pt} of Section \ref{strictdisc-sect}, for a strictly self-similar object $N$ of a traced symmetric monoidal category $\C$, the endomorphism monoids $\C(N,N)$ and ${\bf Int\C}((N,N),(N,N))$have the same underlying set.  The $\bf Int$ construction provides a new composition, tensor, etc. on this set. With suitable notational discipline, we may treat the `old' and `new' compositions \& tensors as operations on the same set, and even consider their interaction. 

This is not done simply in order to horrify categorically-minded readers (although it will undoubtedly have that effect), but neither is it just a notational trick. 

The historical precedent and motivation for this was not originally phrased categorically.  In Girard's system, it is notable that the dynamics (i.e. cut and cut-elimination) is modeled by the composition of an endomorphism monoid of a {\em compact closed} category, whereas the model of conjunction is the tensor $\_ \star \_$ of the underlying {\em traced} category. The models of the exponentials are different again, and the bang $!(\ )$ is best seen as a right fixed-point semi-monoidal endofunctor $f\star !(f) = !(f)$ for the tensor modeling the conjunction -- but is derived from yet another monoidal tensor that distributes over the tensor modeling conjunction. 

The system as a whole relies on treating all these as operations on the same underlying set, and indeed considering their interaction.  Outside of models of untyped systems, it is hard to account for all this; within the untyped (or strictly reflexive) setting it is more algebraically natural, but still category-theoretically disturbing.
\\

Such notational abuse allows us to write down the following

\begin{proposition}\label{notationabuse-prop}
Let $(\C ,\otimes , s_{\_ , \_} , a_{\_,\_,\_} , Tr_{\_ , \_}^{- })$ be a symmetric traced monoidal category, and let $N=N\otimes N\in Ob(\C)$ be a (non-unit) strictly self-similar object. 
$(\C(N,N),\otimes)$ is then a semi-monoidal monoid;  for simplicity, let us denote its underlying set by $M$, its composition by $\cdot$,  its tensor by $\otimes$, and its canonical associativity isomorphism \& inverse by  
\[  \tau=a_{N,N,N} \in \C(N\otimes (N\otimes N), (N\otimes N)\otimes N)=\C(N,N) \] and 
\[  \tau'=a^{-1}_{N,N,N} \in \C((N\otimes N)\otimes N, N\otimes (N\otimes N))=\C(N,N)  \]
and its canonical symmetry isomorphism by 
\[ \sigma=s_{N,N}\in \C(N\otimes N , N \otimes N)=\C(N,N) \]

In the compact closed category $({\bf Int\C},\Box)$, the object $(N,N)$ is strictly reflexive,  and its endomorphism monoid is a semi-monoidal monoid with underlying set $M$. Let us denote its composition by $\circ$, its tensor by $\Box$,  its canonical associativity isomorphism \& inverse by $T,T'\in M$ and its symmetry isomorphism by $S\in M$. 

The following are then immediate :  
\begin{enumerate}
\item $(M,\circ)$ and $(M,\cdot)$ share the same identity element, $1_M$.
\item $T=\tau\otimes \tau^{-1}$ 
\item $S=\sigma \otimes \sigma$
\item The functions $(1\otimes \_) , (\_ \otimes 1) : (M,\cdot) \rightarrow (M,\cdot)$ are homomorphic self-embeddings of the monoid $(M,\cdot)$.
\item The functions $(1\Box \_) , (\_ \Box 1) : (M,\circ) \rightarrow (M,\circ)$ are homomorphic self-embeddings of the monoid $(M,\circ)$.
\item The functions $(1\otimes \_) , (\_ \otimes 1) : (M,\cdot) \rightarrow (M,\circ)$ are homomorphic  and anti-homomorphic  monoid embeddings respectively.
\item The subset $\{ \tau , \tau' \}\subseteq M$ generates, by closure under the composition $   \cdot $ and the tensor $ \otimes$, a subgroup of $(M,\cdot)$ isomorphic to Thompson's group $\mathcal F$.
\item The subset $\{ T , T' \}=\{ \tau\otimes \tau' , \tau'\otimes \tau\} \subseteq M$ generates, by closure under the composition $\circ$ and the tensor $\Box$, a subgroup of $(M,\circ)$ isomorphic to Thompson's group $\mathcal F$.
 \end{enumerate}
\end{proposition}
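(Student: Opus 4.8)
The eight assertions split naturally into three groups of increasing content, and I would treat them in that order. Items (1)--(3) are pure bookkeeping once the object identifications forced by strict self-similarity are spelled out. By Definition~\ref{Int-def} the identity of $(N,N)$ in $\mathbf{Int}\C$ is $1_N\otimes 1_N=1_{N\otimes N}=1_N$, which is also the identity of $\C(N,N)$; this is (1). For (2) and (3) one instantiates Lemma~\ref{Intcanonisos-lem}: its formulas $T_{(X,U),(Y,V),(Z,W)}=\alpha_{X,Y,Z}\otimes\alpha^{-1}_{W,V,U}$ and $S_{(X,U),(X',U')}=\sigma_{X,X'}\otimes\sigma_{U,U'}$, evaluated with every component equal to $N$, give $T=\tau\otimes\tau^{-1}=\tau\otimes\tau'$ and $S=\sigma\otimes\sigma$; the only thing to check is that, using $N\otimes N=N$, these canonical arrows indeed lie in $\C(N\otimes N,N\otimes N)=M$.

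For (4)--(6), the homomorphism clauses are just the interchange law: $(1_N\otimes f)\cdot(1_N\otimes g)=1_N\otimes(f\cdot g)$ and symmetrically, which makes $(1\otimes\_)$ and $(\_\otimes 1)$ endomorphisms of $(M,\cdot)$, i.e.\ (4); running the same argument with the symmetric monoidal tensor $\Box$ of $\mathbf{Int}\C$ --- legitimate since $(N,N)$ is strictly self-similar there, so $\Box$ restricts to $M$ and, by (1), $1_{(N,N)}=1_M$ --- gives (5). For (6) one notes that the very same set-maps $(\_\otimes 1_N)$ and $(1_N\otimes\_)$, now viewed with codomain $\mathbf{Int}\C((N,N),(N,N))=(M,\circ)$, are exactly the restrictions to the endomorphism monoid of $(N,N)$ of the covariant and contravariant faithful semi-monoidal functors $L_N,R_N\colon\C\to\mathbf{Int}\C$ of Proposition~\ref{semi-cocontra-prop}; one is therefore a homomorphism and the other an anti-homomorphism with respect to $\circ$, according to variance. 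Faithfulness of $L_N$ and $R_N$ (and likewise of their analogues for $\mathbf{Int}\C$, which is compact closed hence traced) also delivers the injectivity needed to call all of (4)--(6) embeddings.

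Items (7) and (8) are two instances of Theorem~\ref{Fassoc-thm}/Corollary~\ref{Finall-corol}. For (7): $(\C(N,N),\otimes)$ is a non-unit semi-monoidal monoid whose canonical associator is $\tau$, so its group of canonical associativity isomorphisms is a copy of $\mathcal F$, generated under composition by $x_0=\tau,\ x_{i+1}=1\otimes x_i$. Closing $\{\tau,\tau'\}$ under $\cdot$ already yields $1_M=\tau\cdot\tau'$, hence closing also under $\otimes$ yields every $x_i$ and so all of $\mathcal F$; conversely it yields no more, since by the coherence theorem for associativity the canonical associativity isomorphisms of $N$ are precisely the arrows built from $\tau,\tau'$ by $\cdot$ and $\otimes$. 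For (8) the identical argument is run inside $\mathbf{Int}\C$: the endomorphism monoid of the strictly self-similar object $(N,N)$ under $\Box$ is a semi-monoidal monoid with canonical associator $T=\tau\otimes\tau'$ (by (2)) and inverse $T'=\tau'\otimes\tau$, so Corollary~\ref{Finall-corol} gives that $\{T,T'\}$ closed under $\circ$ and $\Box$ is a copy of $\mathcal F$. Here one extra check is needed: that $(N,N)$ is genuinely a non-unit object of $\mathbf{Int}\C$ (equivalently $T\neq 1_M$), so that the group obtained is $\mathcal F$ rather than trivial; this follows from $N$ being non-unit in $\C$, since otherwise Proposition~\ref{NSS-prop} applied inside $\mathbf{Int}\C$ would force $\tau\otimes\tau'=1_M$, contradicting $\tau\neq 1_N$.

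The genuinely non-formal points are two. The first is the injectivity in (4)--(6): this is clean when the scalars are trivial --- the setting of Theorem~\ref{strictproc-thm} --- because then the categorical trace (equivalently, the cups and caps of the $\dagger$-compact structure on $(N,N)$) furnishes a one-sided inverse to $(\_\otimes 1_N)$ up to the dimension scalar $Tr^N_{I,I}(1_N)$, which is the identity; in general one must instead lean on the faithfulness asserted in Proposition~\ref{semi-cocontra-prop}. The second, and the one I would expect to absorb most of the care, is the ``no more than $\mathcal F$'' half of (7) and (8): it is the coherence theorem for associativity that guarantees the closure of a two-element set under composition and tensor cannot escape $\mathcal F$. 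Everything else is routine unwinding of Definition~\ref{Int-def}, Lemma~\ref{Intcanonisos-lem}, and bifunctoriality.
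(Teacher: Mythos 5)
Your proof is correct and follows essentially the same route as the paper's, which simply cites Definition~\ref{Int-def} for part 1, Lemma~\ref{Intcanonisos-lem} for parts 2--3, general semi-monoidal-monoid results for parts 4--5, Proposition~\ref{semi-cocontra-prop} for part 6, and Theorem~\ref{Fassoc-thm} for parts 7--8; you have merely unwound those citations in more detail (e.g.\ the interchange law for 4--5, and the non-unit check for $(N,N)$ in part 8, which the paper leaves implicit).
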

\begin{proof}
Part 1. is immediate from the definition of the $\bf Int$ construction, and parts 2. - 3. are similarly immediate from Lemma \ref{Intcanonisos-lem}.  Parts 4. and 5.  are general results on semi-monoidal monoids found in \cite{JHRS}.  Part 6. is derived from Proposition \ref{semi-cocontra-prop}. Finally, parts 7. and 8. are immediate from Theorem \ref{Fassoc-thm}.
\end{proof}

We now move on to horrify further  the categorically-minded reader, and consider some interactions between the canonical isomorphisms for $\star$, and the tensor  $\Box$ and composition $\circ$. Doing so leads us to the (rather categorically respectable) theory of Frobenius algebras, as well as monoids derived from Frobenius algebras that have close connections with classic structures from both group and semigroup theory.  

\begin{remark} As indicated in Footnote 9., we may consider the composition, tensor, dual, etc. derived from Abramsky's {\bf GoI} construction, rather than Joyal, Street, \& Verity's $\bf Int$ construction, as yet another collection of operations on the same underlying set -- together with a non-trivial bijection of the underlying set that will map between the two. 

In the following sections, we consider the interactions between the composition, tensor, and canonical isomorphisms of the underlying traced monoidal category with those of the derived from the $\bf Int$ construction.  We could of course do the same with the operations derived from the $\bf GoI$ construction instead, and derive an entirely distinct (\& similarly interesting) set of non-trivial interactions.  

This brings us to another reason for using the conventions of \cite{JSV} instead of those of \cite{SA96} -- we derive a more direct route to an interesting class of Frobenius algebras \& monoids introduced by Abramsky \& Heunen in \cite{AH}.
\end{remark} 

\subsection{Reflexivity and (unitless) Frobenius algebras \& monoids}\label{Fmonoid-sect}
As well as the fundamental r\^ole of compact closure in Abramsky \& Coecke's categorical quantum mechanics program \cite{AC},  another key building block is the (closely connected -- see Proposition \ref{SDFA-thm}) notion of a {\em Frobenius algebra}.  Beyond their well-known applications in quantum field theory \cite{JK}, in quantum information they model fan-out operations \cite{HS} in quantum circuits, and play the r\^ole of orthonormal bases in purely categorical formulations \cite{CP,CPV} of quantum foundations.  

In stark contrast to compact closure itself, the notion of a `unitless Frobenius algebra' is well-established, both in terms of theory and examples. These were defined and studied by S. Abramsky and C. Heunen in \cite{AH}, from where the following definition (but not terminology) is taken :   
\begin{definition}\label{FA}
Let $(\mathcal C ,\otimes , \alpha_{\_,\_,\_})$ be a  semi-monoidal category. 
An {\bf Abramsky-Heunen} or {\bf A-H Frobenius algebra} $\mathcal F = (S, \Delta, \nabla)$ in $\mathcal C$ consists of an object $S\in Ob(\C)$, equipped with an associative {\bf split arrow} $\Delta :S\rightarrow S \otimes S$ and a co-associative {\bf merge arrow} $\nabla:S\otimes S \rightarrow S$. These are required to satisfy the {\bf Frobenius condition} 
\[ (1_S\otimes \nabla) \alpha_{S,S,S}^{-1}(\Delta\otimes 1_S) = \Delta \nabla = (\nabla \otimes 1_S)\alpha_{S,S,S}(1_S\otimes \Delta) \]
Explicitly, {\bf associativity} and {\bf co-associativity} are the requirements that 
\begin{itemize}
\item $\nabla(1_S\otimes \nabla) = \nabla(\nabla\otimes 1_S)\alpha_{S,S,S}$  
\item $(1_S \otimes \Delta)\Delta=\alpha_{S,S,S} ((\Delta \otimes 1_S)\Delta) $. 
\end{itemize}
\end{definition}
Every Frobenius algebra is -- by  definition -- an A-H Frobenius algebra, but the converse is not true. In particular, as the above definition does not mention the unit object, there are therefore no obstacles to considering A-H Frobenius algebras whose distinguished object is the unique (non-unit) object of a semi-monoidal monoid.

\begin{definition}\label{AHFM-def}
Let $(M,\star ,\alpha)$ be a semi-monoidal monoid, and let $\Delta$ and $\nabla$ be the split and merge arrows of an A-H Frobenius algebra at the unique object of this monoid.  We define its {\bf A-H Frobenius} or {\bf A-H F monoid} to be the semi-monoidal submonoid of  $(M,\star)$ generated by the closure of $\{ \alpha , \alpha^{-1} , \Delta , \nabla \}$ under composition and tensor.
\end{definition}

\begin{remark}[Another variation of terminology] It is more standard to refer to the `split' and `merge' arrows of a Frobenius algebra as the {\em co-monoid} and {\em monoid} arrows respectively.  We do not do this, in order to avoid the potentially fatal confusion of terminology that would result.
\end{remark}

An interesting class of examples of Frobenius algebras is given by the following well-known result :
\begin{theorem}\label{SDFA-thm}
For every object $X$ of a compact closed category $(\C ,\_ \otimes\_ , \sigma_{\_,\_} , I , \epsilon\_ , \eta\_)$, the object $X^*\otimes X$ is self-dual and there is a Frobenius  algebra (the {\bf canonical Frobenius algebra}) at $X\otimes X^*$ with distinguished arrows $\Delta$ and $\nabla$ given by the following composites:
\[ \xymatrix{
X^* \otimes X \ar@{-}[r]^\cong & X^*\otimes I \otimes X \ar[rr]^{1_X^*\otimes \eta_X \otimes 1_X} & & X \otimes X^* \otimes X^*\otimes X } \]
and 
\[ \xymatrix{
X^* \otimes X \otimes X^*\otimes X  \ar[rr]^{1_{X^*}\otimes \epsilon_X\otimes 1_X} & &  X^*\otimes I \otimes X \ar@{-}[r]^{\cong } & X^* \otimes X
} 
\]
\end{theorem}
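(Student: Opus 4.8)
The plan is to verify the three required properties --- self-duality of $X^* \otimes X$, associativity of $\nabla$, co-associativity of $\Delta$, and the Frobenius condition --- directly from the compact closed structure, using the diagrammatic calculus of cups and caps throughout. First I would dispatch the self-duality claim: since $(X^* \otimes X)^* = X^* \otimes X^{**} = X^* \otimes X$ when we note $(\_)^{**} = Id$ and unravel the contravariant monoidal functor on the tensor (which reverses order, $\sigma$-twisted), the object $X^* \otimes X$ is its own dual up to the symmetry isomorphism; I would write out the self-duality arrows explicitly from $\eta_X$, $\epsilon_X$, and $\sigma$, exactly as in the $(\Rightarrow)$ direction of Lemma~\ref{reflexchar-lem}. (I note the statement writes the Frobenius algebra at $X \otimes X^*$ while the displayed $\Delta, \nabla$ act on $X^* \otimes X$; I would reconcile this by the symmetry $\sigma_{X,X^*}$, or simply prove everything for $X^* \otimes X$ and transport.)

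Next I would prove co-associativity of $\Delta$. The cleanest route is the diagrammatic one: $\Delta$ is, up to the canonical unit isomorphisms, the arrow that ``inserts a cup'' $\eta_X : I \to X \otimes X^*$ between the $X^*$ and the $X$ of the source object. Iterating this, both $(1 \otimes \Delta)\Delta$ and $\alpha\,((\Delta \otimes 1)\Delta)$ become the arrow $X^* \otimes X \to X^* \otimes X \otimes X^* \otimes X \otimes X^* \otimes X$ that inserts two nested cups; drawing the string diagram shows these agree once the associator is accounted for. Dually, $\nabla$ ``removes a cup'' via $\epsilon_X : X^* \otimes X \to I$, and associativity $\nabla(1 \otimes \nabla) = \nabla(\nabla \otimes 1)\alpha$ follows by the mirror-image diagram. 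Bifunctoriality of $\otimes$, naturality of $\alpha$ and $\sigma$, and the triangle/pentagon coherence are the only background facts needed; I would state them are being used rather than re-deriving.

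The main obstacle is the Frobenius condition itself, $(1 \otimes \nabla)\alpha^{-1}(\Delta \otimes 1) = \Delta\nabla = (\nabla \otimes 1)\alpha(1 \otimes \Delta)$. Here the diagrammatic calculus does the real work: each of the three composites, when drawn as a string diagram on $X^* \otimes X \otimes X^* \otimes X \to X^* \otimes X \otimes X^* \otimes X$, reduces --- using the yanking (snake) identities $(1_X \otimes \epsilon)(\eta \otimes 1_X) = 1_X$ and its mirror --- to the same picture, namely a cap $\epsilon_X$ joining the middle $X, X^*$ strands composed with a cup $\eta_X$ re-creating a middle $X, X^*$ pair, i.e. exactly $\Delta \nabla$. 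I expect the bookkeeping of associators (the $\alpha^{\pm 1}$ in the Frobenius equation, and the implicit ones in $\Delta$ and $\nabla$) to be the fiddly part; if strictness is assumed (as Joyal--Street--Verity do, and as is harmless by coherence) these vanish and the computation is a two-line picture chase. I would therefore present the proof first in the strict case via the graphical calculus, then remark that coherence extends it to the general monoidal setting, which is the standard move justified by \cite{JS1,JS2}.

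Finally I would remark that this Frobenius algebra is the ``name of the identity'' construction familiar from categorical quantum mechanics: $X^* \otimes X \cong [X \to X]$ carries a Frobenius structure because it is the internal-hom endomorphism object, and the split/merge arrows are composition-with-identity and identity-insertion. This situates the result relative to Theorem~\ref{reflexchar-lem} and explains why, for a strictly reflexive $N$ (where $[N \to N] = N$), the canonical Frobenius algebra lives on $N$ itself --- which is the case of genuine interest for the later sections.
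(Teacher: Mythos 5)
Your argument is correct in substance, but note that the paper does not actually carry out a proof of Theorem \ref{SDFA-thm}: its ``proof'' is a pointer to \cite{JV} for a direct verification, together with the remark that the result follows abstractly from \cite{AL}, where a Frobenius structure is produced from any ambidextrous adjunction, the self-dual compact closed case being a special instance. Your direct string-diagram verification is therefore essentially the argument the paper delegates to \cite{JV} --- the ``pair of pants'' computation --- and it is a perfectly good, more elementary and self-contained route; what the abstract route via \cite{AL} buys is independence from the graphical calculus and generality beyond compact closure. You are also right to flag the $X \otimes X^*$ versus $X^* \otimes X$ mismatch in the statement; relatedly, the printed target $X \otimes X^* \otimes X^* \otimes X$ of $\Delta$ should read $X^* \otimes X \otimes X^* \otimes X$, and the middle factor fed to the cap is $X \otimes X^*$, so strictly one needs $\epsilon_{X^*}$ or a symmetry twist of $\epsilon_X$. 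These are typographical repairs, not obstacles.

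One simplification to what you call the main obstacle: yanking is not needed for the Frobenius condition. In each of the three composites the inserted cup $\eta$ and the applied cap $\epsilon$ act on \emph{disjoint} tensor factors of the intermediate six-fold tensor, so $(1\otimes\nabla)\,\alpha^{-1}\,(\Delta\otimes 1)$, $\Delta\nabla$, and $(\nabla\otimes 1)\,\alpha\,(1\otimes\Delta)$ are literally the same diagram by bifunctoriality (the interchange law) together with coherence for $\alpha$ --- no strand is ever straightened. The snake identities would only enter if one checked the unit laws, which are absent in the unitless Abramsky--Heunen setting of Definition \ref{FA}. The same disjointness argument gives associativity and co-associativity, exactly as you describe.
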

\begin{proof}
We refer to, for example, \cite{JV} for a direct proof of this, but note that it follows abstractly from a more general result of \cite{AL}, where it is proved for any object with an ambidextrous adjunction -- with the self-dual compact closed case following as a special case.
\end{proof} 

\begin{corollary}\label{onesidedinverse-corol} In Theorem \ref{SDFA-thm} above,  $\nabla\Delta\cong 1_{X^*\otimes X}\otimes \mu$ for some abstract scalar $\mu\in \C(I,I)$, and hence when $(\C,\otimes,I)$ has trivial scalars $\nabla\Delta=1_{X^*\otimes X}$.  However, this is a one-sided, rather than two-sided inverse. 
\end{corollary}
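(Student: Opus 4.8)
The plan is to compute the composite $\nabla\Delta : X^*\otimes X \to X^*\otimes X$ directly from the definitions in Theorem~\ref{SDFA-thm}, using the yanking axiom of compact closure together with naturality of the coherence isomorphisms. First I would write $\Delta$ as the composite $X^*\otimes X \cong X^*\otimes I \otimes X \xrightarrow{1_{X^*}\otimes\eta_X\otimes 1_X} X^*\otimes X\otimes X^*\otimes X$ (being careful with associators, which are suppressed by the self-dual strictification but which I would keep explicit enough to track the scalar), and $\nabla$ as $X^*\otimes X\otimes X^*\otimes X \xrightarrow{1_{X^*}\otimes\epsilon_X\otimes 1_X} X^*\otimes I\otimes X \cong X^*\otimes X$. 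Composing, the middle of the string diagram is the $X$-strand produced by $\eta_X$ immediately feeding into the $X^*$-slot of $\epsilon_X$, i.e. exactly the configuration $(1\otimes\epsilon_X)(\eta_X\otimes 1)$ acting on the relevant $X$ (or $X^*$) strand. By the yanking axiom this cup-cap pair straightens to the identity on that strand, but it does so only up to the re-association / re-bracketing isomorphisms, and — crucially — the two outermost strands $X^*$ (on the far left) and $X$ (on the far right) are untouched throughout. Hence $\nabla\Delta$ is, up to coherence isomorphism, $1_{X^*\otimes X}$ tensored with the endomorphism of $I$ obtained from closing up whatever scalar the coherence isomorphisms contribute; calling that scalar $\mu\in\C(I,I)$ gives $\nabla\Delta \cong 1_{X^*\otimes X}\otimes\mu$.

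Next, for the trivial-scalars case: $\C(I,I)$ is the singleton $\{1_I\}$, so $\mu = 1_I$ and $1_{X^*\otimes X}\otimes 1_I = 1_{X^*\otimes X}$ after applying the right unit isomorphism $\rho$; since with trivial scalars the only abstract scalar is the identity and the coherence isomorphism $X^*\otimes X\otimes I \cong X^*\otimes X$ conjugates $1_{X^*\otimes X}\otimes 1_I$ to $1_{X^*\otimes X}$, we conclude $\nabla\Delta = 1_{X^*\otimes X}$ on the nose. This is the easy half.

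The substantive content is the final sentence, that this is a \emph{one-sided} inverse: I would show $\Delta\nabla \neq 1$ in general by invoking the Frobenius condition itself, which gives $\Delta\nabla = (\nabla\otimes 1_S)\alpha_{S,S,S}(1_S\otimes\Delta)$ with $S = X^*\otimes X$; drawing this as a string diagram one sees a genuine "handle" (an $X$-loop that is not contractible by yanking because it is threaded through both $\nabla$ and $\Delta$), so $\Delta\nabla$ is the idempotent-like "loop operator" rather than the identity. Concretely I would exhibit a counterexample — e.g. in $\bf Int(pInj)$ or in $\mathbf{Rel}$, or simply point to the bicyclic-monoid behaviour foreshadowed in the abstract, where $\nabla\Delta = 1$ but $\Delta\nabla$ is a proper idempotent — or argue abstractly that $\Delta$ being split monic with $\nabla$ as retraction forces $\Delta\nabla$ to be idempotent, and that it equals $1$ only when $\Delta$ is iso, which fails whenever $X\otimes X^*$ is not iso to $(X\otimes X^*)\otimes(X\otimes X^*)$, i.e. generically.

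The main obstacle I expect is purely bookkeeping: correctly tracking the suppressed associativity and unit isomorphisms through the two composites so that the stray scalar $\mu$ is correctly identified (and seen to be an \emph{abstract} scalar, i.e. an endomorphism of $I$, rather than something larger), and making the "yanking straightens the inner cup-cap, leaving the outer strands alone" step rigorous rather than merely pictorial. The one-sidedness claim is conceptually easy once one has the Frobenius diagram in hand, but writing down a clean reason that $\Delta\nabla\neq 1$ that does not secretly assume more than "trivial scalars" — that is, locating the precise obstruction — is where a little care is needed.
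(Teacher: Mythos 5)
Your treatment of the final sentence (one-sidedness) is fine, and in its third variant is essentially the paper's own argument: if $\Delta\nabla$ were also the identity, then $\Delta$ and $\nabla$ would be mutually inverse isomorphisms exhibiting $X^*\otimes X\cong (X^*\otimes X)\otimes(X^*\otimes X)$, so \emph{every} self-dual object of \emph{every} compact closed category would be self-similar and hence reflexive --- which fails already in $({\bf Hilb_{fd}},\otimes)$.

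The first half, however, contains a genuine conceptual error. In the composite $\nabla\Delta$, the cup $\eta_X$ inserted by $\Delta$ produces exactly the two \emph{middle} strands of $X^*\otimes X\otimes X^*\otimes X$, and the cap applied by $\nabla$ consumes exactly those same two strands; no external strand threads through the cup--cap pair (the outer $X^*$ and $X$ pass by untouched, as you note). This is therefore a \emph{closed loop} $I\rightarrow X\otimes X^*\rightarrow I$, not the zig-zag configuration $(1\otimes\epsilon)(\eta\otimes 1)$ to which the yanking axiom applies, and yanking does not make a closed loop disappear. The loop \emph{is} the abstract scalar $\mu$ (the categorical ``dimension'' of $X$); it does not come from ``whatever scalar the coherence isomorphisms contribute'' --- by MacLane coherence the associators and unitors contribute nothing. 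This is precisely the paper's one-line argument: $\nabla\Delta$ differs from the identity by the composite of a unit and a co-unit, hence by \cite{SAAS} is the identity up to an abstract scalar, and equals the identity exactly when the scalars are trivial. Had your yanking argument been correct, you would have proved $\nabla\Delta=1_{X^*\otimes X}$ unconditionally, rendering the trivial-scalars hypothesis vacuous; but in $({\bf Hilb_{fd}},\otimes)$ one computes $\nabla\Delta=\dim(X)\cdot 1_{X^*\otimes X}$, so the scalar is genuinely present and genuinely non-trivial. You need to replace the appeal to yanking by the observation that a closed loop is an endomorphism of $I$ which can be slid out of the tensor.
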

\begin{proof} The composite  $\nabla\Delta\cong 1_{X^*\otimes X}$ differs from the identity on $X^*\otimes X$ by the composite of a unit and a co-unit (i.e. a closed loop) in a compact closed category. Therefore, by \cite{SAAS}, it is the identity up to an abstract scalar, and is precisely the identity when the monoid of scalars is trivial.  To see why it is not a two-sided inverse, note that were this to be the case, every self-dual object of any compact closed category would be self-similar \& hence reflexive.
\end{proof}

\begin{definition}\label{AHFMreflex-def} Let $N\cong X \otimes X^*$ be a self-dual object of a compact closed category $\C$ . We refer to the A-H Frobenius algebra at $N$ given by Theorem \ref{SDFA-thm} above as the {\bf standard A-H Frobenius algebra}  at $N$. 

In the case where $N$ is a strictly reflexive object of $\C$, we observe that the endomorphism monoid of $N$ is a semi-monoidal monoid, and therefore (following Definition \ref{AHFM-def}) contains an A-H Frobenius monoid. We refer to this as the {\bf standard A-H F. monoid} at $N$.  When $\C$ also has trivial scalars, we refer to this as the {\bf simple A-H F monoid}.
\end{definition}

\begin{conjecture}
We conjecture that ``non-degenerate standard A-H Frobenius monoids differ only by their scalars''. More precisely, but less generally, all (non-abelian) simple A-H F. monoids are isomorphic.
\end{conjecture}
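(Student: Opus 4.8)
\noindent The plan is to mimic the strategy behind Theorem~\ref{Fassoc-thm}: exhibit a single ``master'' semi-monoidal monoid $\mathcal P$, presented by generators and relations that provably hold in \emph{every} simple A-H F.\ monoid, show that $\mathcal P$ is realised concretely and is non-abelian, and then prove that $\mathcal P$ is \emph{rigid} --- it admits no proper non-abelian quotient. Those three facts together force every non-abelian simple A-H F.\ monoid to be a homomorphic image of $\mathcal P$, and hence (by rigidity) isomorphic to it, which is the assertion.

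First I would collect the relations. Let $\alpha=\tau$ be the canonical associator and $\Delta,\nabla$ the arrows of the standard A-H Frobenius algebra at $N$ (Theorem~\ref{SDFA-thm}, Definition~\ref{AHFMreflex-def}). By Corollary~\ref{Finall-corol} (equivalently Parts~7--8 of Proposition~\ref{notationabuse-prop} in the ${\bf Int}$ setting), the elements $x_0=\alpha$, $x_{i+1}=1\star x_i$ satisfy the Thompson relations $x_i^{-1}x_jx_i=x_{j+1}$ for $i<j$ and their closure under composition and tensor is a copy of $\mathcal F$. By Corollary~\ref{onesidedinverse-corol}, in the trivial-scalar (hence ``simple'') case $\nabla\circ\Delta=1_N$ while $e:=\Delta\circ\nabla$ is a non-trivial idempotent, so $\{\Delta,\nabla\}$ generates a quotient of the bicyclic monoid $\mathcal B=\langle p,q\mid pq=1\rangle$ in which $e\neq 1_N$. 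The remaining relations are the \emph{interaction} relations: the Frobenius identity and the (co)associativity of Definition~\ref{FA}, together with the bifunctoriality/naturality identities describing how $1\star\Delta,\ \Delta\star 1,\ 1\star\nabla,\ \nabla\star 1$ are conjugated by the $x_i$ (these are automatic, $\star$ being a functor and the $x_i$ its canonical isomorphisms). Let $\mathcal P$ be the semi-monoidal monoid presented on $\{\alpha^{\pm 1},\Delta,\nabla\}$ by exactly these relations; by construction there is a canonical surjective semi-monoidal homomorphism $\mathcal P\twoheadrightarrow M$ onto every simple A-H F.\ monoid $M$.

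Next I would realise $\mathcal P$. The concrete examples developed later --- reflexive objects of ${\bf Int}({\bf pInj})$ and the strictly reflexive objects of Definition~\ref{G-def} --- produce a simple A-H F.\ monoid $M_0$ which is visibly non-abelian (already its $\mathcal F$-part is). If one verifies that $\mathcal P\to M_0$ is injective --- equivalently, that words in $\alpha^{\pm 1},\Delta,\nabla$ admit a normal form respected by $M_0$ --- then $\mathcal P\cong M_0$ is itself a simple A-H F.\ monoid, and it remains to show that any non-abelian $M$ receiving a surjection from $\mathcal P$ receives an isomorphism. For this I would analyse the congruences of $\mathcal P$. On the bicyclic part the congruence lattice is classically understood (Clifford--Preston): each non-identity congruence either collapses $\mathcal B$ onto $\mathbb Z$ or identifies $e^k$ with $e^{k'}$ for some $k\neq k'$. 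On the $\mathcal F$-part, the argument of Theorem~\ref{Fassoc-thm} shows there are no non-abelian proper quotients. The crux is that the Frobenius interaction relations \emph{propagate} collapse between the two factors: writing $e=\Delta\circ\nabla=(\nabla\star 1)\,\alpha\,(1\star\Delta)$, a non-trivial identification among powers of $e$ descends to a non-trivial relation among the $x_i$, hence --- by the structure of $\mathcal F$ --- abelianises all of $\mathcal P$; and symmetrically, a non-trivial congruence on the $\mathcal F$-part drags the bicyclic part down with it. A congruence non-trivial on neither factor is trivial on all of $\mathcal P$, since $\mathcal P$ is generated by the two.

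The main obstacle --- and the reason this is posed as a conjecture --- is twofold. First, \emph{completeness} of $\mathcal P$: exactly the concern raised around unitless compact closure in Section~\ref{ccmprobs-sect}, one cannot presume that the relations one can easily name are \emph{all} the relations, and the injectivity/normal-form claim $\mathcal P\cong M_0$ is a genuine term-rewriting problem whose confluence is not evident. Second, the congruence-propagation step glues a monoid with a fully classified congruence lattice ($\mathcal B$) to an essentially perfect group ($\mathcal F$) through the Frobenius law; the congruence lattice of such an amalgam need not be controlled by those of its parts, and making the ``propagation'' rigorous --- in particular ruling out exotic congruences that are non-trivial on \emph{both} factors yet non-universal --- is where the real work lies.
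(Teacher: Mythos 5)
The first thing to say is that the paper contains no proof of this statement: it is posed, and left, as an open conjecture, so there is no argument of the author's to compare yours against. What you have written is a programme of attack rather than a proof, and to your credit you say so yourself in your closing paragraph. The strategy --- present a ``generic'' simple A-H F.\ monoid $\mathcal P$ by relations that demonstrably hold in every instance, realise it concretely, and show it admits no proper non-abelian quotient --- is the natural generalisation of the argument used for Theorem \ref{Fassoc-thm}, and it sits well with the way Theorem \ref{FaB-thm} packages the structure as interacting copies of $\mathcal F$ and $\mathcal B$. One direction of your ``propagation'' step is in fact closer to being established in the paper than you indicate: the unnumbered Lemma following the conjecture shows that collapsing the bicyclic part (forcing $\Delta\circ\nabla=1$) yields $\alpha=\alpha^{-1}$, i.e.\ the torsion relation $x_0^2=1$; since $\mathcal F$ is torsion-free and its only proper quotients are abelian, this abelianises the $\mathcal F$-part. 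Likewise every proper homomorphic image of $\mathcal B$ is a cyclic group, so any congruence nontrivial on the bicyclic generators already sends that part to something abelian.

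Nevertheless the two obstacles you name are genuine, and neither is resolved by what you have written, so the conjecture remains open. First, you have no control over whether your list of relations actually presents $M_0$: the paper itself records an ``extra'' identity in the concrete model, $(\Delta\circ\nabla)\cdot(\Delta\circ\nabla)=1$, whose categorical status is explicitly declared unknown; while that particular identity mixes the two compositions, it is a warning that the concrete monoid may satisfy relations in $(\Hi,\circ,\Box)$ that are not consequences of the Frobenius and coherence axioms, in which case $M_0$ is a \emph{proper} quotient of your $\mathcal P$. Your architecture can absorb this only if rigidity is proved first, which is the second and decisive gap: the claim that every proper congruence on $\mathcal P$ has abelian quotient is essentially the entire content of the conjecture, and the congruence lattice of an amalgam of $\mathcal B$ and $\mathcal F$ along the Frobenius law is not determined by the (individually well-understood) congruence lattices of the two pieces. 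In particular you have not excluded a congruence generated by a single ``mixed'' pair of words --- restricting trivially to each factor yet collapsing neither --- whose quotient is non-abelian and proper. Until that is done, the proposal is a reasonable research plan, not a proof.
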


The simple A-H F monoid is worth studying as much for its algebra as its category theory. The following places it in the mainstream of  semigroup theory and group theory, as we demonstrate in Section \ref{algebra-sect}.  

\begin{lemma} Let $N$ be a strictly reflexive object of a compact closed category $(\C,\otimes, t_{\_,\_,\_})$ with trivial units. Then the split \& merge maps $\Delta ,\nabla \in \C(N,N)$ of the simple A-H F monoid satisfy:
\begin{enumerate}
\item $\nabla \Delta =1_N$
\item $\Delta\nabla=1_N$ iff $N$ is the unit object.
\end{enumerate}
\end{lemma}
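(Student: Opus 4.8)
The plan is to read off both claims directly from the results already established about the standard A-H Frobenius algebra. For Part 1, the key input is Corollary \ref{onesidedinverse-corol}: for any self-dual object $X^*\otimes X$ of a compact closed category, the composite $\nabla\Delta$ equals $1_{X^*\otimes X}\otimes\mu$ for some abstract scalar $\mu$, and is precisely $1_{X^*\otimes X}$ when the scalar monoid is trivial. Since $N$ is reflexive it is in particular self-dual, so $N\cong X^*\otimes X$ for a suitable $X$ (by Lemma \ref{reflexchar-lem}, or directly via $N\cong N^*$ so $N\cong N^*\otimes N \cong \ldots$; most cleanly, self-duality gives $N \cong N\otimes N^*$ so we may take $X=N^*$), the standard A-H Frobenius algebra at $N$ is defined, and the hypothesis of trivial units means $\C(I,I)$ is the singleton, hence $\mu$ is the identity scalar. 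Therefore $\nabla\Delta=1_N$. First I would make the identification $N\cong X^*\otimes X$ explicit, then invoke Corollary \ref{onesidedinverse-corol} with the triviality of scalars, and finally note that (since $N$ is \emph{strictly} reflexive, hence strictly self-dual and strictly self-similar) the coherence isomorphisms implicit in the ``$\cong$'' of that corollary are identities, so the equality holds on the nose in $\C(N,N)$ rather than merely up to canonical isomorphism.

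For Part 2, the ``if'' direction is immediate: if $N=I$ is the unit object, then $\Delta$ and $\nabla$ are (composites of) unit isomorphisms of the monoidal structure, and $\Delta\nabla$ is forced to be $1_I$ by coherence (indeed $\C(I,I)$ is trivial). For the ``only if'' direction I would argue contrapositively, essentially reproducing the remark at the end of the proof of Corollary \ref{onesidedinverse-corol}: if $\Delta\nabla=1_N$ as well, then $\Delta$ and $\nabla$ are mutually inverse, so $N\cong N\otimes N$ would follow — but more is true here. Since $N$ is strictly reflexive, $[N\to N]=N^*\otimes N = N\otimes N$ (strict self-duality) and the $\app,\lam$ arrows are identities; the Frobenius split/merge $\Delta:N\to N\otimes N$ and $\nabla:N\otimes N\to N$ being mutually inverse would exhibit $N$ as strictly self-similar in a way compatible with the reflexive structure, and then by Proposition \ref{NSS-prop} / Corollary \ref{twoobjects-corol} the monoidal subcategory generated by $N$ would have $N$ itself as a Saavedra unit — but there is already a (distinct) unit object $I\neq N$ in the ambient category, and a monoidal category has a unit object unique up to unique isomorphism, forcing $N\cong I$; strictness then upgrades this to $N=I$. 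Alternatively, and perhaps more transparently, I would use the Frobenius condition itself: combining $\Delta\nabla = (\nabla\otimes 1)\alpha(1\otimes\Delta)$ with $\nabla\Delta=1_N$ (Part 1) and the hypothetical $\Delta\nabla=1_N$ to deduce that $\Delta$ must be an isomorphism whose inverse is a unit-type map, which collapses $N$ to $I$.

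The main obstacle I anticipate is Part 2's ``only if'' direction — specifically, turning the informal observation ``otherwise every self-dual object would be reflexive'' (which appears as a throwaway line in the proof of Corollary \ref{onesidedinverse-corol}) into a clean argument that actually forces $N=I$ rather than merely producing a contradiction in the general-category case. The cleanest route is probably to go through Saavedra units: mutual invertibility of $\Delta$ and $\nabla$, together with the Frobenius identities and the fact that $\nabla\Delta=1_N$, should show that the functors $N\otimes(-)$ and $(-)\otimes N$ are fully faithful on the subcategory generated by $N$, making $N$ a Saavedra unit there; since that subcategory already contains the genuine unit $I$, and units are essentially unique, $N\cong I$, and strict reflexivity (so $N$ is literally its own internal hom with identity $\app,\lam$) forces the honest equality $N=I$. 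I would double-check that no circularity arises — Part 1 is used freely in Part 2, which is fine since they are proved in order — and that the trivial-units hypothesis is genuinely needed only for Part 1 (Part 2 is a structural statement that should hold regardless, though having trivial scalars makes the ``if'' direction trivial to state).
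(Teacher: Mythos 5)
Part 1 of your proposal is essentially the paper's: both read it off from Corollary \ref{onesidedinverse-corol} plus triviality of the scalar monoid, and your extra remarks about strictness collapsing the coherence isomorphisms are harmless. Part 2 is where the gap lies. Neither of your two routes is actually carried out, and the crucial step of route (a) --- ``mutual invertibility of $\Delta$ and $\nabla$, together with the Frobenius identities, \emph{should show} that $N\otimes(-)$ and $(-)\otimes N$ are fully faithful'' --- is exactly the point that needs an argument and does not follow from mutual invertibility alone. A strictly reflexive $N$ already comes equipped with mutually inverse arrows exhibiting $N\cong N\otimes N$ (it is self-similar by hypothesis), and the paper's running example $\mathbb N$ in $({\bf pInj},\uplus)$ has mutually inverse code/decode bijections while being nowhere near cancellable; so any proof via Saavedra units must extract full faithfulness from the \emph{specific} form of the Frobenius $\Delta,\nabla$, which you gesture at but never do. Route (b) (``deduce that $\Delta$ must be an isomorphism whose inverse is a unit-type map, which collapses $N$ to $I$'') is not yet an argument. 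Your closing claim that the trivial-scalars hypothesis is needed only for Part 1 is also off, since your Part 2 invokes Part 1.

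For comparison, the paper's proof of Part 2 is a short computation with the (co)associativity axioms rather than the Frobenius condition or unit uniqueness: assuming $\Delta\nabla=1_N$, cancel $\nabla$ in the associativity axiom $\nabla(1\otimes\nabla)=\nabla(\nabla\otimes 1)\alpha$ to get $1\otimes\nabla=(\nabla\otimes 1)\alpha$, hence $\alpha=(\Delta\otimes 1)(1\otimes\nabla)=\Delta\otimes\nabla$; dually, co-associativity gives $\alpha=\nabla\otimes\Delta$. Multiplying, $\alpha^2=(\Delta\nabla)\otimes(\nabla\Delta)=1$, so $\alpha=\alpha^{-1}$; since the canonical associators of a non-unit semi-monoidal monoid form a copy of the torsion-free group $\mathcal F$ (Theorem \ref{Fassoc-thm}), this forces $\alpha=1$, and Proposition \ref{NSS-prop} then says associativity is strict iff $N$ is the unit object. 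This is the concrete calculation your route (b) was reaching for; if you want to salvage your write-up, replace both of your sketches with it (or with an equally explicit derivation).
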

\begin{proof}
Part 1. is immediate from Corollary \ref{onesidedinverse-corol} above. For part 2., let denote $t_{N,N,N}\in \C(N,N)$ by $\alpha$, and assume that the inverse of part 1. is a two-sided inverse. In this case, the  associativity axiom, $\nabla(1\otimes \nabla) = \nabla(\nabla\otimes 1)\alpha$ implies $(1\otimes \nabla)=(\nabla\otimes 1)\alpha$, giving $\alpha=\Delta \otimes \nabla $. Similarly, the co-associativity axiom implies   $\alpha=\nabla \otimes \Delta$ and so $\alpha=\alpha^{-1}$. Therefore, associativity at this semi-monoidal monoid is strict. By Proposition \ref{NSS-prop} this is the case iff $N$ is the unit object.
\end{proof}

In certain cases (including those arising from the strictification of reflexivity procedure described in Theorem \ref{strictproc-thm}), we are able to give an explicit description of the standard A-H F monoid at a strictly reflexive object.  The key to this is that in the case where the compact closed category of Theorem \ref{SDFA-thm} above arises from the $\bf Int$ construction, the {\em split} and {\em merge} arrows have a particularly neat form : 
\begin{proposition}\label{canonicalsplitmerge-prop}
Let $(C,\otimes , \alpha_{\_,\_,\_},\sigma_{\_,\_})$ be a traced symmetric monoidal category. Then the defining {\em split} and {\em merge} arrows of the standard Frobenius algebra at a self-dual  object $(N,N)\cong (N,I)\Box(I,N)\in Ob({\bf Int\C})$,
\[ \Delta \in {\bf Int\C}( (N,N),  (N,N)\Box(N,N)) \ = \  \C(N\otimes (N\otimes N),(N\otimes N)\otimes N) \]
and 
\[ \nabla \in {\bf Int\C}( (N,N)\Box(N,N), (N,N)) \ = \  C((N\otimes N)\otimes N,N\otimes (N\otimes N)) \]
are given by the symmetry and associativity isomorphisms of the underlying traced category, as  
\begin{itemize}
\item $\Delta \ =  \ \alpha_{N,N,N} (1\otimes \sigma_{N,N})$
\item $\nabla \ = \ (1\otimes \sigma_{N,N})\alpha_{N,N,N}^{-1}$
\end{itemize}
respectively.
\end{proposition}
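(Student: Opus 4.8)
The plan is to unwind the definition of the standard (canonical) Frobenius algebra of Theorem \ref{SDFA-thm} in the special case where the ambient compact closed category is $\bf Int\C$, and then to compute the two composites defining $\Delta$ and $\nabla$ using the explicit description of composition, tensor, unit, co-unit, and the canonical isomorphisms of $\bf Int\C$ recorded in Definition \ref{Int-def} and Lemma \ref{Intcanonisos-lem}. Concretely, Theorem \ref{SDFA-thm} realises the Frobenius algebra at $X\otimes X^*$; here we are looking at the self-dual object $(N,N)\cong (N,I)\Box(I,N)$ of $\bf Int\C$, so I would first identify the relevant $X$ (namely $X=(N,I)$, so $X^*=(I,N)$ and $X^*\otimes X$ in the statement corresponds, after a symmetry, to our $(N,N)$), and then translate each arrow appearing in the composites of Theorem \ref{SDFA-thm} — the coherence isomorphism $X^*\otimes X\cong X^*\otimes I\otimes X$, the tensored unit $1_{X^*}\otimes\eta_X\otimes 1_X$, and similarly for $\nabla$ — into an arrow of the underlying traced category $\C$ via the recipe ${\bf Int\C}((A,B),(C,D))=\C(A\otimes D,C\otimes B)$.

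First I would pin down source and target as objects of $\C$: by Definition \ref{Int-def}, $\Delta\in{\bf Int\C}((N,N),(N,N)\Box(N,N))$ unfolds, using $(N,N)\Box(N,N)=(N\otimes N,N\otimes N)$, to an arrow $\C\big(N\otimes(N\otimes N),(N\otimes N)\otimes N\big)$, exactly as asserted; and dually for $\nabla$. This bookkeeping is the part that makes the final formulas $\Delta=\alpha_{N,N,N}(1\otimes\sigma_{N,N})$ and $\nabla=(1\otimes\sigma_{N,N})\alpha_{N,N,N}^{-1}$ type-check. Then I would compute: the unit $\eta_{(N,I)}$ of $\bf Int\C$ is, by Definition \ref{Int-def}, a symmetry arrow $(\sigma_{I,N})_\ud$ in $\C$, and the co-unit $\epsilon$ is likewise a symmetry; the coherence isomorphism $(N,N)\cong$ (tensor with the $\bf Int\C$-unit $(I,I)$) is built from identities and canonical isomorphisms of $\C$; and tensoring in $\bf Int\C$ with identity arrows, by the arrow-level formula for $\_\Box\_$, inserts the appropriate $\C$-symmetries. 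Composing these in $\C$ (recalling that composition in $\bf Int\C$ itself involves a trace, but here the traced-out object is the $\C$-unit $I$, so by Vanishing I the trace is trivial) collapses the whole composite to a product of one associator and one symmetry of $\C$, which I would then massage via the hexagon/pentagon coherence of $\C$ into precisely $\alpha_{N,N,N}(1\otimes\sigma_{N,N})$; the computation for $\nabla$ is the formal mirror image, yielding $(1\otimes\sigma_{N,N})\alpha_{N,N,N}^{-1}$.

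The main obstacle I expect is purely notational rather than conceptual: keeping the nested tensor associativities straight, since $\bf Int\C$'s tensor reverses the order of the second components and Lemma \ref{Intcanonisos-lem} tells us the $\bf Int\C$-associator is $\alpha\otimes\alpha^{-1}$ — so the $\alpha^{-1}$-factor that appears in $\nabla$ is really coming from the second-component behaviour of $\Box$, and one must be careful that the symmetry $\sigma_{N,N}$ in each formula sits on the correct pair of copies of $N$. A secondary subtlety is the identification, up to the stated canonical isomorphisms, of the Frobenius object: Theorem \ref{SDFA-thm} gives a Frobenius algebra on $X\otimes X^*$ (or the self-dual $X^*\otimes X$), and one must check that under $(N,N)\cong(N,I)\Box(I,N)$ the arrows produced are exactly those displayed, with no stray scalar (here harmless, since nothing in the statement forces trivial scalars, but the composite of a unit and co-unit never arises in $\Delta$ or $\nabla$ individually — only in $\nabla\Delta$, cf. Corollary \ref{onesidedinverse-corol}). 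Once the diagram is drawn in the graphical calculus of \cite{JS1,JS2,JSV}, with the $\bf Int\C$ wires resolved into pairs of $\C$ wires running in opposite directions, both identities should read off essentially by inspection, the feedback loop from composition being absorbed by Vanishing I.
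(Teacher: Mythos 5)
Your proposal is correct and follows essentially the same route as the paper, whose entire proof is the one-line observation that the result is ``immediate, and simply follows from substituting in the definitions of the unit / co-unit maps of $\bf Int\C$ into the definitions of Theorem \ref{SDFA-thm}.'' You are merely spelling out what that substitution involves — the source/target bookkeeping, the identification $X=(N,I)$, the collapse of the $\bf Int\C$-composition via Vanishing~I, and the coherence massage — all of which is sound and consistent with Definition \ref{Int-def} and Lemma \ref{Intcanonisos-lem}.
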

\begin{proof}
This is immediate, and simply follows from substituting in the definitions of the unit / co-unit maps of $\bf Int\C$ into the definitions of Theorem \ref{SDFA-thm}.
\end{proof}

\begin{remark} The above observation, although simple, breaks the connection between the standard Frobenius algebra at a self-dual object of $\bf Int\C$ and the distinguished unit / co-unit maps -- and hence, the unit object.  Instead, the standard Frobenius algebra is simply derived from the canonical isomorphisms of the underlying traced symmetric monoidal category $\C$. This will of course be key to writing down the generators of the standard A-H F monoid(s) of Definition \ref{AHFMreflex-def}. \end{remark}

\begin{theorem}\label{buildingFA-thm}
Let $(\C ,\otimes , s_{\_ , \_} , a_{\_,\_,\_} , Tr_{\_ , \_}^{- })$ be a symmetric traced monoidal category, and let $N=N\otimes N\in Ob(\C)$ be a (non-unit) strictly self-similar object. We follow the conventions \& notation of Proposition \ref{notationabuse-prop}, and denote the underlying set of $\C(N,N)={\bf Int\C}((N,N),(N,N))$ by $M$, along with the two semi-monoidal monoid structures on it given by 
\begin{itemize}
\item (From the underlying traced category) $(M,\cdot,\otimes , \tau ,\sigma)$ 
\item (From the compact closed category)  $(M,\circ, \Box, T,S)$. 
\end{itemize}
Let us denote the inverse of $\tau \in (M,\cdot)$ by $\tau'\in (M,\cdot)$.
Then the standard A-H Frobenius monoid is the monoid generated by the closure of the set   
\[  \{ \tau\cdot (1 \otimes \sigma)\ ,\  (1\otimes \sigma)\cdot \tau' \ ,\  \tau \otimes \tau' \ , \ \tau'\otimes \tau \}  \]
under the composition $\circ$ and tensor $\Box$.
\end{theorem}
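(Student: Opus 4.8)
The plan is to unwind the definition of the standard A-H Frobenius monoid at $(N,N)$ and match its generators, one at a time, with the four elements displayed in the statement, using the concrete descriptions already established. By Definition \ref{AHFMreflex-def}, the standard A-H F monoid at the strictly reflexive object $(N,N)\in Ob({\bf Int\C})$ is, following Definition \ref{AHFM-def}, the submonoid of the semi-monoidal monoid $(M,\circ,\Box)$ generated, under the composition $\circ$ and the tensor $\Box$, by the set $\{T,T',\Delta,\nabla\}$, where $T$ is the canonical associativity isomorphism for $\Box$ at $(N,N)$, $T'$ its $\circ$-inverse, and $\Delta,\nabla$ are the split and merge arrows of the standard A-H Frobenius algebra at $(N,N)$ supplied by Theorem \ref{SDFA-thm}. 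It therefore suffices to prove the equality of subsets of $M$
\[ \{T,\ T',\ \Delta,\ \nabla\} \ = \ \{\ \tau\otimes\tau',\ \tau'\otimes\tau,\ \tau\cdot(1\otimes\sigma),\ (1\otimes\sigma)\cdot\tau'\ \}. \]

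First I would dispatch the associators: Parts 2 and 8 of Proposition \ref{notationabuse-prop} give $T=\tau\otimes\tau^{-1}=\tau\otimes\tau'$ and $T'=\tau'\otimes\tau$, which are the last two elements of the right-hand set. Next I would handle $\Delta$ and $\nabla$. Since $(N,N)\cong (N,I)\Box(N,I)^*$ exhibits $(N,N)$ in the self-dual form $X\Box X^*$ required by Theorem \ref{SDFA-thm}, Proposition \ref{canonicalsplitmerge-prop} applies and describes the split and merge arrows of the standard Frobenius algebra at $(N,N)$ as $\Delta=\alpha_{N,N,N}(1\otimes\sigma_{N,N})$ and $\nabla=(1\otimes\sigma_{N,N})\alpha_{N,N,N}^{-1}$, with concatenation denoting composition in $\C$. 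Reading these off under the conventions of Proposition \ref{notationabuse-prop} -- in which $\cdot$ is $\C$-composition, $\otimes$ is the tensor of $\C$ restricted to $\C(N,N)$ (a semi-monoidal tensor on $M$ precisely because $N\otimes N=N$), and $\tau=a_{N,N,N}$, $\tau'=a_{N,N,N}^{-1}$ are the same associator and its inverse -- they become $\Delta=\tau\cdot(1\otimes\sigma)$ and $\nabla=(1\otimes\sigma)\cdot\tau'$, the first two elements of the right-hand set. This gives the set equality, and hence the monoids generated under $\circ$ and $\Box$ coincide.

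The mathematical content here is slight; the work is entirely in the bookkeeping, and that is where I expect the only real difficulty to lie. One must keep strictly separate the two compositions in play -- the $\C$-composition $\cdot$, used merely to name $\Delta$ and $\nabla$ as individual elements of the set $M$, versus the ${\bf Int}$-composition $\circ$, under which the Frobenius monoid is then generated -- and likewise the two tensors $\otimes$ and $\Box$, and one must check that the left/right orders of the composites in Proposition \ref{canonicalsplitmerge-prop} line up with those in the displayed generating set rather than with their opposites. Once Propositions \ref{notationabuse-prop} and \ref{canonicalsplitmerge-prop} are invoked with these conventions pinned down, no further computation is required.
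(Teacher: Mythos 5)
Your proposal is correct and follows essentially the same route as the paper's own proof: identify $T=\tau\otimes\tau'$ and $T'=\tau'\otimes\tau$ via Lemma \ref{Intcanonisos-lem} / Proposition \ref{notationabuse-prop}, identify $\Delta=\tau\cdot(1\otimes\sigma)$ and $\nabla=(1\otimes\sigma)\cdot\tau'$ via Proposition \ref{canonicalsplitmerge-prop}, and then appeal to Definitions \ref{AHFM-def} and \ref{AHFMreflex-def} to conclude that the closure of these four elements under $\circ$ and $\Box$ is precisely the standard A-H Frobenius monoid. Your explicit care over which composition ($\cdot$ versus $\circ$) and which tensor ($\otimes$ versus $\Box$) is in play at each step is exactly the bookkeeping the paper's conventions are designed to manage.
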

\begin{proof}
From Proposition \ref{canonicalsplitmerge-prop}, the split and merge arrows are given by $\Delta =  \tau\cdot (1 \otimes \sigma)$ and $\nabla = (1\otimes \sigma)\cdot \tau'$ respectively. The associator for $\_ \Box\_$ and its inverse are given by $T=\tau \otimes \tau'$ and $T'=\tau'\otimes \tau$ respectively.  These primitives then, by definition, generate the standard A-H Frobenius monoid, and satisfy the {\em Frobenius condition} 
\[ (1\Box \nabla) \circ T' \circ (\Delta\Box 1) \ = \ \Delta \circ \nabla \ = \ (\nabla \Box  1) \circ T(1\Box  \Delta) \]
along with {\em associativity} and {\em co-associativity}
\begin{itemize}
\item $\nabla\circ (1\Box  \nabla) = \nabla\circ (\nabla\Box 1)\circ T$  
\item $(1\Box \Delta)\circ \Delta=((\Delta \Box 1)\circ \Delta) \circ T$. 
\end{itemize}
\end{proof}

\subsection{Algebraic aspects}\label{algebra-sect}
The standard A-H F monoid at a  strictly reflexive object contains, by construction, all canonical associativity isomorphisms of the semi-monoidal endomorphism monoid; it therefore (by Theorem \ref{Fassoc-thm}) contains a copy of  Thompson's iconic group  $\mathcal F$.  We now demonstrate that the simple case also contains a copy of one of the most iconic structures from semigroup theory -- the bicyclic monoid.  Further, there is a very natural way in which the simple A-H F monoid may be thought of as, ``interacting copies of the bicyclic monoid, and Thompson's $\mathcal F$''.  

The following was first published in \cite{EL}, but appears previously to have been known by Clifford, Preston, \& Rees (see \cite{CH} for a historical overview).
\begin{definition}
The {\bf bicyclic monoid} $\bf B$ is the monoid with two generators, and a single relation
\[ {\mathcal B} \ = \ \langle s,r \ : \ rs = 1 \rangle \] 
(Note that this is a one-sided inverse, and certainly does not imply that $sr=1$)!
 \end{definition}
It is remarkably well-studied, and appears in a wide range of algebraic and computational settings. Its theory is very well-established; the following results may be found in, for example, \cite{MVL}.

\begin{theorem}\label{bicyclic-thm}
\ \\ \begin{enumerate}
\item $\mathcal B$ is an inverse monoid, and is isomorphic to the inverse monoid of partial injections on the natural numbers generated by the successor function and its (partially defined) inverse.
\item Given an arbitrary monoid $M$, and a pair of elements $r,s\in M$ satisfying $rs=1\neq sr$, then the submonoid generated by $\{r,s\}$ is isomorphic to $\mathcal B$.
\item The elements of $\mathcal B$ may be given a normal form as pairs of natural numbers, with composition given by, for all $(d,c),(b,a)\in \mathbb N \times \mathbb N$ 
\[ (d,c) (b,a) \ = \   \left(d + [b\monus c], [c\monus b]+a \right) \] 
where the {\bf monus} operation $\monus$ is defined by $y\monus x = \left\{ \begin{array}{lr} y-x & x\leq y \\ 0, & \mbox{otherwise.}\end{array}\right.$
\end{enumerate}
\end{theorem}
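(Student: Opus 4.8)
The plan is to prove the three parts in the order $3$, then $1$, then $2$, since the normal form of Part 3 is the combinatorial backbone that makes everything else routine. For \textbf{Part 3}, I would first observe that every word in the generators $s,r$ of $\mathcal{B}$ can be rewritten, using only $rs=1$ applied left-to-right to cancel an adjacent $rs$, into the form $s^d r^c$ with $d,c\geq 0$; this rewriting terminates and is confluent, so each element of $\mathcal{B}$ is represented by at least one pair $(d,c)$. To show distinct pairs give distinct elements — and to extract the multiplication formula — I would introduce the candidate monoid $\mathcal{B}_0=(\N\times\N,\cdot)$ with identity $(0,0)$ and product $(d,c)(b,a)=\bigl(d+[b\monus c],\,[c\monus b]+a\bigr)$, and verify (a routine check from the basic identities $x\monus(y+z)=(x\monus y)\monus z$, $(x+y)\monus y=x$, and $[x\monus y]+y=[y\monus x]+x$) that $\cdot$ is associative. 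Since $(0,1)(1,0)=(0,0)$ in $\mathcal{B}_0$, the presentation of $\mathcal{B}$ yields a homomorphism $\psi:\mathcal{B}\to\mathcal{B}_0$ with $s\mapsto(1,0)$, $r\mapsto(0,1)$; it is surjective because $(d,c)=(1,0)^d(0,1)^c$, and it sends $s^d r^c\mapsto(d,c)$. Combining this with the rewriting above, every element of $\mathcal{B}$ is some $s^d r^c$, and two of these are equal in $\mathcal{B}$ only if $\psi$ carries them to the same pair; hence $\psi$ is a bijection, the pairs $(d,c)$ are a genuine normal form, and the product is precisely the monus formula.

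For \textbf{Part 1}, I would exhibit the representation concretely. Let $\sigma\in I(\N)$ be the successor injection $n\mapsto n+1$ and $\sigma^{-1}$ its partial inverse (the predecessor, with domain $\N\setminus\{0\}$); then one composite of $\sigma$ and $\sigma^{-1}$ is $1_{\N}$ while the other is $1_{\N\setminus\{0\}}\neq 1_{\N}$, so sending $s,r$ to $\sigma,\sigma^{-1}$ in whichever order matches $rs=1$ gives a homomorphism from $\mathcal{B}$ onto the inverse submonoid of $I(\N)$ they generate. I would then show this homomorphism is injective by identifying the image of $s^d r^c$ as the partial injection whose domain is the half-line $\{n\geq c\}$ acting by the shift $n\mapsto n-c+d$: the domain recovers $c$, and then any value recovers $d$. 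Since $I(\N)$ is an inverse monoid and the generating set $\{\sigma,\sigma^{-1}\}$ is closed under the inversion $(\ )^{-1}$, the submonoid they generate is an inverse submonoid; hence $\mathcal{B}$ is an inverse monoid, and is isomorphic to exactly the stated submonoid of $I(\N)$.

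For \textbf{Part 2}, given a monoid $M$ and $r,s\in M$ with $rs=1\neq sr$, the presentation of $\mathcal{B}$ produces a homomorphism $\phi:\mathcal{B}\to M$ with $s\mapsto s$, $r\mapsto r$, whose image is the submonoid $\langle r,s\rangle$; the whole content is that $\phi$ is injective. One route is direct: put $e_k=s^k r^k\in M$, note $r^k s^k=1$ forces $e_k^2=e_k$ and $e_k e_\ell=e_{\max(k,\ell)}$, so the $e_k$ form a descending chain of idempotents; one checks that $e_1=e_2$ would force $sr=1$, and that $e_k=e_{k+1}$ forces $e_1=e_2$, so the $e_k$ are pairwise distinct, and then a short manipulation of an equation $s^d r^c=s^{d'}r^{c'}$ — multiplying on the left by a suitable power of $r$ and on the right by a suitable power of $s$ to isolate the $e_k$ — forces $c=c'$ and $d=d'$, so $\phi$ is injective by Part 3. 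A slicker route, legitimate since we are already citing \cite{MVL} for the theory of $\mathcal{B}$, is to use that every proper homomorphic image of $\mathcal{B}$ is a cyclic group (finite, infinite, or trivial): the image $\langle r,s\rangle$ contains elements with $rs=1\neq sr$, so it is not a group, hence $\phi$ cannot factor through any proper quotient and is injective.

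The main obstacle is not the bookkeeping — the rewriting to $s^d r^c$, the monus-based homomorphism checks, and the injectivity of $\theta$ in Part 1 are all mechanical once $\mathcal{B}_0$ is in hand — but rather two places where genuine work is hidden: the associativity verification for the monus product in Part 3 (fiddly, though standard), and above all the injectivity argument in Part 2, which is the one step that must actually \emph{use} the hypothesis $sr\neq 1$ to propagate the non-collapse of every normal form; everything else follows formally from the normal form and the concrete model.
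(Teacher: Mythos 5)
Your proof is correct. The paper offers no proof of its own here — it simply cites the literature (e.g.\ \cite{MVL}) — and your argument (normal form $s^d r^c$ via rewriting plus the monus-product monoid on $\N\times\N$ for uniqueness, the partial-shift representation for Part 1, and the idempotent chain $e_k=s^kr^k$ or the ``proper quotients are cyclic groups'' fact for Part 2) is exactly the standard treatment found in those references.
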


\begin{remark} It is hard to avoid seeing the bicyclic monoid itself as a categorical structure.   It is well-known that the natural numbers with the usual ordering forms a (posetal) category, and addition is a (strictly symmetric \& associative) monoidal tensor on this category, with the unit object simply being $0\in \mathbb N$.  Further, it is straightforward that $\mathbb N$ is a {\em closed} monoidal category, with the internal hom given by the above monus operation, $y\monus x = \left\{ \begin{array}{lr} y-x & x\leq y \\ 0, & \mbox{otherwise.}\end{array}\right.$

The bicyclic monoid then has all the appearance of being the result of a `dualising' construction on this monoidal closed category. It may also be relevant that $(\mathbb N , + , 0)$ is traced, with the trace given on objects by subtraction.
\end{remark}

\begin{theorem}\label{FaB-thm} Let $N$ be a (non-unit) strictly reflexive object of a compact closed category $(\C,\otimes , t_{\_,\_,\_} , s_{\_,\_} , I) $ with trivial scalars. Then $\C(N,N)$ is a semi-monoidal monoid containing a copy of the simple A-H F monoid, which we denote $\mathbb A \subseteq \C(N,N)$.  Then 
\begin{enumerate}
\item $\mathbb A$ is generated by  the closure under composition and tensor of :
\begin{itemize}
\item The associator $\alpha=t_{N,N,N}$ and its inverse $\alpha^{-1}$.
\item The split map $\Delta$ and its generalised inverse $\nabla$.
\end{itemize}
\item The associator and its inverse generate (by closure under composition \& tensor), a copy of Thompson's $\mathcal F$ within $\mathbb A$.
\item The split element $\Delta$ and its generalised inverse $\nabla$ generate (by closure under composition) a copy of the bicyclic monoid $\mathcal B$ within $\mathbb A$.\end{enumerate}
\end{theorem}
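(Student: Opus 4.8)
The plan is to read off all three parts from results already established; only Part~2 needs any genuine work. For the setup: since $N$ is strictly reflexive, its endomorphism monoid $\C(N,N)$ is a semi-monoidal monoid (Definition~\ref{AHFMreflex-def}), whose tensor I write $\otimes$ (it is that of $\C$, with the code/decode arrows reduced to identities) and whose canonical associator is $\alpha=t_{N,N,N}$. As $\C$ has trivial scalars, the simple A-H~F.\ monoid $\mathbb A$ at $N$ is defined, and unwinding Definitions~\ref{AHFM-def} and~\ref{AHFMreflex-def} it is precisely the submonoid of $\C(N,N)$ obtained by closing $\{\alpha,\alpha^{-1},\Delta,\nabla\}$ under composition and tensor, where $\Delta,\nabla\in\C(N,N)$ are the split and merge maps of the standard A-H Frobenius algebra at $N$ from Theorem~\ref{SDFA-thm}. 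Part~1 is then just a restatement of this, once I note that $\nabla$ is a generalised inverse of $\Delta$: by Corollary~\ref{onesidedinverse-corol} trivial scalars give $\nabla\Delta=1_N$, hence $\Delta\nabla\Delta=\Delta$ and $\nabla\Delta\nabla=\nabla$.

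For Part~2, I would prove that closing $\{\alpha,\alpha^{-1}\}$ under composition and tensor yields exactly the group $G$ of \emph{all} canonical associativity isomorphisms of $(\C(N,N),\otimes)$. The inclusion into $G$ is clear, since composites and tensors of rebracketing isomorphisms are again rebracketings; for the reverse inclusion, the proof of Theorem~\ref{Fassoc-thm} produces the elements $x_0=\alpha$ and $x_{j+1}=1_N\otimes x_j$, each lying in the closure, and shows via MacLane's pentagon that the $x_j$ generate every canonical associator. The closure contains $\alpha^{-1}$ and is closed under inverses (from $(fg)^{-1}=g^{-1}f^{-1}$ and $(f\otimes g)^{-1}=f^{-1}\otimes g^{-1}$), so it is a group and therefore equals $G$; by Theorem~\ref{Fassoc-thm} (equivalently Corollary~\ref{Finall-corol}) $G$ is a copy of Thompson's $\mathcal F$, and it lies in $\mathbb A$ because $\alpha,\alpha^{-1}\in\mathbb A$ and $\mathbb A$ is closed under composition and tensor. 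This is the one step I expect to require care: it rests squarely on the coherence content of Theorem~\ref{Fassoc-thm} --- that every canonical associator is reached from the $x_j$, and that $\mathcal F$ has no non-abelian quotient --- so that the closure is \emph{exactly} a copy of $\mathcal F$, neither a proper quotient of it nor anything larger.

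For Part~3, Corollary~\ref{onesidedinverse-corol} (trivial scalars) gives $\nabla\Delta=1_N$, while the Lemma stated earlier --- that $\nabla\Delta=1_N$ and that $\Delta\nabla=1_N$ holds iff $N$ is the unit object --- shows $\Delta\nabla\neq 1_N$, since $N$ is non-unit. Hence $\nabla$ and $\Delta$ form a pair of elements of $\C(N,N)$, viewed as a monoid under composition, with $\nabla\Delta=1_N\neq\Delta\nabla$, and so by Theorem~\ref{bicyclic-thm}(2) the submonoid they generate under composition is isomorphic to the bicyclic monoid $\mathcal B$. It lies inside $\mathbb A$ because $\Delta,\nabla\in\mathbb A$ and $\mathbb A$ is closed under composition. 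Combining these three parts completes the proof.
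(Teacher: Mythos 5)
Your proposal is correct and follows essentially the same route as the paper's own (very terse) proof: Part~1 by unwinding Definitions~\ref{AHFM-def} and~\ref{AHFMreflex-def}, Part~2 from Theorem~\ref{Fassoc-thm}, and Part~3 from the one-sided inverse $\nabla\Delta=1\neq\Delta\nabla$ together with Theorem~\ref{bicyclic-thm}(2). The only differences are that you fill in more of the coherence argument for Part~2 and cite the unnamed lemma (rather than Corollary~\ref{onesidedinverse-corol} alone) for $\Delta\nabla\neq 1$, which is if anything a more precise attribution.
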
 
\begin{proof}
Part 1. is simply the definition of the simple A-H F monoid at $N$. Part 2. follows directly from Theorem \ref{Fassoc-thm}.  From  Corollary \ref{onesidedinverse-corol}, the split and merge arrows of the simple A-H F monoid satisfy $\nabla\Delta=1$ and $\Delta\nabla\neq 1$, and from Part 2. of Theorem \ref{bicyclic-thm}, they therefore generate a copy of the bicyclic monoid.  
\end{proof}

\section{Concrete examples}\label{concrete-sect}
We now move on from the abstract theory of strict extensional reflexivity to concrete examples. These have been  separated out from the theoretical constructions, in order to emphasise that the theory is generally applicable, and not tied to any specific example. 

The concrete setting we now consider it that of Girard's original two Geometry of Interaction papers \cite{GOI1,GOI2} (and indeed Abramsky, Haghverdi, \& Scott's linear combinatory logic \cite{AHS}).  The starting point for this is  the (inverse) traced monoidal category of partial injections, considered as a traced monoidal subcategory of an illustrative example of Joyal, Street, and Verity.

\subsection{The category of relations, its matrix calculus, and its trace}
In \cite{JSV}, Joyal, Street, and Verity used the monoidal category of relations with disjoint union as a canonical example of a traced, but not compact closed, category. Their treatment was based on writing relations in matrix form.
\begin{definition}
	The category ${\bf Rel}$ of relations on sets has as objects all sets. An arrow $R\in {\bf Rel}(X,Y)$ is a subset $R\subseteq Y\times X$. Given an arrow $S\in {\bf Rel}(Y,Z)$, composition is given by the usual formula for relational composition,
	\[ (z,x)\in SR \ \ \mbox{ iff } \exists y\in Y \ : \ (z,y)\in S \ \mbox{ and }\ (y,x)\in R \]  
The category of relations also has a dagger $(\ )^c:{\bf Rel}^{op}\rightarrow {\bf Rel}$, given by relational converse $R^c=\{ (x,y) : (y,x)\in R\}$.

A partial function is a relation satisfying $(y,x),(z,x)\in R \Rightarrow y=z$, and a partial injection is a partial function whose converse is also a partial function. It is standard to write partial functions and partial injections in functional form, as $f(x)=y$, rather than $(y,x)\in f$. Partial functions and partial injections form wide subcategories of $\bf Rel$, denoted $\bf pFun$ and $\bf pInj$ respectively.  

The category $\bf Rel$ also has a biproduct, the disjoint union $\_ \uplus \_ : {\bf Rel}\times {\bf Rel}\rightarrow {\bf Rel}$ given by on objects by disjoint union $A\uplus B = A \times \{ 0 \} \cup  B \times \{ 1 \}$ and extended to arrows in the obvious manner.

The biproduct structure implies the existence of projection and injection arrows. For all $A,B\in  Ob({\bf Rel})$, the projection arrows are the following partial injections   
	\begin{itemize}
		\item  $\pi_0 = \{ (a,(a,0)) : a\in A \} \in {\bf Rel}(A\uplus B, A)$ 
		\item  $\pi_1 = \{ (b,(b,1)) : b\in B \} \in {\bf Rel}(A\uplus B, B)$ 
	\end{itemize}
and the injection arrows are their relational converses
	\begin{itemize}
	\item  $\iota_0 = \{ ((a,0),a) : a\in A \} \in {\bf Rel}(A,A\uplus B)$ 
	\item  $\iota_1 = \{ ((b,1),b) : b\in B \} \in {\bf Rel}(B,A\uplus B)$ 
\end{itemize}
Both $\bf pFun$ and $\bf pInj$ are closed under $\_ \uplus\_$; however it is simply a symmetric monoidal tensor on these subcategories and neither a product nor a coproduct. In all three settings, the disjoint union  has as unit object  the empty set $I=\{ \}$, and hence a trivial monoid of scalars. 

\end{definition}

\begin{remark}[Units, scalars, and  strict associativity]
	 The unit object of $\bf Rel$ is not a strict unit; rather, $A\uplus I = A \times \{ 0 \} \cup \{ \} \times \{1 \} = A\times \{ 0 \} \cong A$. 

The unique arrow of the endomorphism monoid of $I$ is the nowhere-defined function on the empty set, thus $\bf Rel$ and the distinguished monoidal subcategories described above provide good examples of unit objects with trivial scalars. The elements (i.e. arrows of the form $f:I\rightarrow X$) are similarly uninteresting; they are the nowhere-defined partial injections whose domain is the empty set. These categories will therefore illustrate how the $\bf Int$ or $\bf GoI$ construction builds a rich structure of elements and names that is essentially unrelated to that of the underlying traced category.  
 
	Note that disjoint union is associative up to canonical isomorphism, but is not strictly associative. In \cite{JSV}, Joyal, Street,\& Verity implicitly appeal to MacLane's strictification procedure for associativity \& units \cite{MCL}, and work within a suitably strictified version of $({\bf Rel},\uplus )$. As well as eliminating repeated re-bracketings, this also provides the justification for working with arbitrary (finite) matrices, as discussed below.
	
	In our setting we need to be cautious of the result of \cite{JHRS}, that we cannot simultaneously strictify associativity and self-similarity (see also Section \ref{strictselfsim-sect}). Hence, following Section \ref{strictify-sect}, we are not able to assume strict associativity in a setting where we wish to find strictly reflexive objects -- at least, in the compact closed setting. 
	
	In the following sections we therefore do not assume strict associativity, although canonical isomorphisms may occasionally be omitted for reasons of clarity. We also restrict ourselves to $(2\times 2)$ matrices, and consider larger matrices as ``matrices of matrices'', with the precise interpretation determined by the source / target objects.  
\end{remark}

\subsection{Matrices of relations}
The following is well-established, and is a corollary of the biproduct structure described above. It is also heavily used in \cite{JSV}.
\begin{proposition}\label{matrixcomp-prop}
	Given arbitrary $F\in  {\bf Rel}(A\uplus B, P\uplus Q)$, then $F$ uniquely determines \& is uniquely  determined by a $2\times 2$ matrix of relations
	$[ f_{ij}]_{i,j\in \{ 0,1\} }$  whose entries are given in terms of the projection / injection arrows by $f_{ij}=\pi_{i}F\iota_j$. it is standard to abuse notation and write $\small F=\left( \begin{array}{cc}f_{00} & f_{01} \\ f_{10} & f_{11} \end{array}\right)$. 	
	The composite of relations in matrix form is then given by the usual formula for matrix composition, with addition and multiplication interpreted by union and  relational composition respectively:  
	\[ 
	\left( \begin{array}{ccc} g_{00} & & g_{01} \\ g_{10} & &g_{11} \end{array} \right) 
	\left( \begin{array}{ccc} f_{00} & & f_{01} \\ f_{10} & & f_{11} \end{array} \right)  
	= \left( \begin{array}{ccc} g_{00}f_{00}\cup g_{01}f_{10}  & &  g_{11}f_{11}\cup g_{10}f_{01} \\
	g_{10}f_{00}\cup g_{11}f_{10} &	& g_{11}f_{11}\cup g_{10}f_{01}
	\end{array} \right)
	\] 
	This may also be drawn via the usual `summing over paths' description of matrix composition where these matrices are drawn as digraphs:
	\[ 
	\xymatrix{
		A \rrto|{f_{00}} \drrto|<<<<<<<{f_{10}} & & P \rrto|{g_{00}}\drrto|<<<<<<<{g_{10}} & & X \\
		B \rrto|{f_{11}} \urrto|<<<<<<<{f_{01}} & & Q\rrto|{g_{11}} \urrto|<<<<<<<{g_{01}}&  & Y  \\
	}
	\] 
	and matrix composition interprets as `summing over paths from source to target':
	\[
	\xymatrix{
		A \xto[0,3]|{g_{00}f_{00}\cup g_{01}f_{10}} \xto[2,3]|>>>>>>>>>>>>{g_{10}f_{00}\cup g_{11}f_{10}} & & & X  \\
		& & & \\
		B \xto[0,3]|{g_{11}f_{11}\cup g_{10}f_{01}} \xto[-2,3]|>>>>>>>>>>>>{g_{00}f_{01}\cup g_{01}f_{11}} & & & Y \\
	}
	\]
\end{proposition}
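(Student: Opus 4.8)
The plan is to reduce the whole proposition to the \emph{biproduct} identities for the projection and injection arrows; once these are in hand, the matrix representation, the composition formula, and its ``sum over paths'' rendering all drop out of distributivity of relational composition over union. First I would record, directly from the explicit subsets defining $\pi_i,\iota_j$ and from the definition of relational composition, the orthogonality and completeness relations
\[ \pi_i\iota_i = 1,\qquad \pi_i\iota_j = \emptyset\ \ (i\neq j),\qquad \iota_0\pi_0\cup\iota_1\pi_1 = 1_{A\uplus B}, \]
where $\emptyset$ is the nowhere-defined relation (the zero morphism of $\bf Rel$). These say exactly that $(A\uplus B,\pi_i,\iota_j)$ is a biproduct, and each is a two-line unwinding of relational converse $(\ )^c$ and of composition on the given subsets. (In the non-strict setting the completeness relation holds only up to the canonical associativity/unit isomorphisms, which, as throughout this section, are suppressed.)

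Next, for the correspondence $F\leftrightarrow[f_{ij}]$ I would exhibit both directions explicitly: $\Phi(F)=[\,\pi_iF\iota_j\,]_{i,j}$ and $\Psi([f_{ij}])=\bigcup_{i,j}\iota_i\, f_{ij}\,\pi_j$. That $\Psi\circ\Phi$ is the identity is immediate once one pulls the unions outside composition and applies $\iota_0\pi_0\cup\iota_1\pi_1=1$ on both source and target: $\bigcup_{i,j}\iota_i\pi_i F\iota_j\pi_j = (\iota_0\pi_0\cup\iota_1\pi_1)\,F\,(\iota_0\pi_0\cup\iota_1\pi_1) = F$. That $\Phi\circ\Psi$ is the identity follows from the orthogonality relations, which annihilate all cross terms and leave $\pi_k(\bigcup_{i,j}\iota_i f_{ij}\pi_j)\iota_l = f_{kl}$. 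This establishes both the bijection and the stated formula $f_{ij}=\pi_iF\iota_j$.

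For the composition formula, given $F:A\uplus B\to P\uplus Q$ and $G:P\uplus Q\to X\uplus Y$, I would insert the resolution $1_{P\uplus Q}=\iota_0\pi_0\cup\iota_1\pi_1$ between $G$ and $F$ and distribute once more:
\[ (GF)_{ij}\;=\;\pi_i\,G\,(1_{P\uplus Q})\,F\,\iota_j\;=\;\bigcup_{k}(\pi_iG\iota_k)(\pi_kF\iota_j)\;=\;\bigcup_k g_{ik}f_{kj}, \]
which is precisely matrix multiplication with $\cup$ playing the role of $+$ and relational composition the role of $\times$; specialising to $i,j\in\{0,1\}$ yields the displayed $2\times 2$ formula (two of whose entries agree with what is printed only up to commutativity of $\cup$). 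The two commuting diagrams then need no separate argument: they are just the standard encoding of a matrix as a labelled bipartite digraph — the entry $f_{ij}$ labels the edge from source vertex $j$ to target vertex $i$ — and the composition picture reads off exactly $\bigcup_k g_{ik}f_{kj}$ as ``concatenate composable edges, then union over the intermediate vertex''. I would also note that $\bf pFun$ and $\bf pInj$ inherit the matrix calculus, since the projections, injections, and the unions $\bigcup_k g_{ik}f_{kj}$ all remain inside these wide subcategories.

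The proposition has no genuinely hard step; all of its content is the biproduct structure of $({\bf Rel},\uplus)$ together with distributivity of $\circ$ over $\cup$. The only real demand is bookkeeping: keeping straight which index is ``source'' and which ``target'', remembering that composition in $\bf Rel$ is written applicatively (so $g_{ik}f_{kj}$ means ``first $f_{kj}$, then $g_{ik}$''), and treating the canonical associativity/unit isomorphisms as implicit in a way that is harmless here but would need care if one refused to strictify.
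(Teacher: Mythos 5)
Your proposal is correct and takes essentially the same route as the paper, whose own (outline) proof rests on precisely the Kronecker-delta identities $\pi_j\iota_i=\delta_{ij}$ that you record; you merely fill in the details the paper leaves implicit, in particular the completeness relation $\iota_0\pi_0\cup\iota_1\pi_1=1$ needed for the bijectivity of $F\leftrightarrow[f_{ij}]$. (Your side remark about entries agreeing ``up to commutativity of $\cup$'' is well taken: the printed top-right entry of the composite matrix is in fact a typo, duplicating the bottom-right one, and your $\bigcup_k g_{ik}f_{kj}$ gives the intended value.)
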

\begin{proof} {\em [Outline]} This is very well-established, and the formula for matrix composition may be derived from the observation that the composite of a projection and an injection  acts like a Kronecker delta, so $ \pi_j\iota_i \ = \ \left\{ \begin{array}{lcr} 0 &  & i\neq j \\
	1 & & i=j \end{array}\right.$
\end{proof}

The category $\bf Rel$ was used as an illustration of a traced symmetric monoidal category in \cite{JSV}, and the structure of the resulting compact closed category $\bf Int(Rel)$ was also described in detail. 
The following, from this reference, is key:
\begin{theorem}\label{reltrace-thm}
	The category $({\bf Rel},\uplus)$ is traced, with the trace defined in terms of the reflexive transitive closure of relations. Given $F=\left( \begin{array}{cc} f_{00} & f_{01} \\ f_{10} & f_{11} \end{array}\right) : X\uplus U \rightarrow Y\uplus U$, then 
	\[ Tr_{X,Y}^U(F)\ =\  f_{00} \cup f_{01}\left(  \bigcup_{j=0}^\infty f_{11}^j \right) f_{10} \] 
\end{theorem}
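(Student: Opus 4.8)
The plan is to take the displayed formula as the \emph{definition} of the candidate trace and then verify the axioms of Definition~\ref{Trace-def} one by one. First I would check well-definedness: $\mathbf{Rel}$ is enriched in complete lattices, so the union $f_{11}^{\ast}:=\bigcup_{j=0}^{\infty}f_{11}^{j}$ (with the convention $f_{11}^{0}=1_{U}$) is again a relation on $U$ --- in fact the reflexive transitive closure of $f_{11}$ --- and hence $Tr_{X,Y}^{U}(F)=f_{00}\cup f_{01}f_{11}^{\ast}f_{10}$ is a genuine arrow $X\to Y$. Throughout I would work inside the matrix / ``summing over paths'' calculus of Proposition~\ref{matrixcomp-prop}: drawing $F$ as a digraph on $X\uplus U\to Y\uplus U$ and identifying the two copies of $U$, the trace is exactly the sum over all directed paths from $X$ to $Y$, each of which either crosses directly (contributing $f_{00}$) or enters the $U$-component once ($f_{10}$), loops $j\ge 0$ times ($f_{11}^{j}$) and exits ($f_{01}$).

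With this picture in hand, several axioms are essentially immediate. Naturality in $X$ and $Y$ holds because pre/post-composing $F$ with $g\uplus 1_{U}$ and $h\uplus 1_{U}$ multiplies precisely the leftmost and rightmost factors of the formula by $g$ and $h$ while leaving $f_{11}^{\ast}$ untouched. Vanishing~I is the observation that when $U=I=\{\,\}$ the off-diagonal and $(1,1)$ entries of $F$ are the empty relation, so only $f_{00}$ survives, modulo the unit isomorphism $X\uplus I\cong X$. Yanking is a one-line computation: $\sigma_{U,U}$ has matrix entries $f_{00}=f_{11}=\emptyset$ and $f_{01}=f_{10}=1_{U}$, so $f_{11}^{\ast}=1_{U}$ and $Tr_{U,U}^{U}(\sigma_{U,U})=\emptyset\cup 1_{U}1_{U}1_{U}=1_{U}$. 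Dinaturality in $U$ reduces, after expanding both sides in the matrix calculus, to the standard ``sliding'' identity $g\,(h\,g)^{\ast}=(g\,h)^{\ast}\,g$ for relational composition and reflexive transitive closure. Superposing reduces to the fact that in the block relation built from $F$ and $g$ the summand $g$ never enters the $U$-loop, so the path-sum factors as $Tr_{X,Y}^{U}(F)\uplus g$ --- the conjugating symmetries in the axiom merely keep the traced component in last position.

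The substantive step, and the one I expect to be the main obstacle, is Vanishing~II, $Tr_{X,Y}^{U\uplus V}(F)=Tr_{X,Y}^{U}\bigl(Tr_{X\uplus U,Y\uplus U}^{V}(F)\bigr)$. Here $F$ is naturally a $3\times 3$ matrix of relations on $X\uplus U\uplus V$, and the claim is that contracting the $V$-loop and then the $U$-loop yields the same relation as contracting the combined $U\uplus V$-block at once. Combinatorially this is clear --- both sides sum over all paths from $X$ to $Y$ that wander arbitrarily through $U\cup V$ --- but making it precise amounts to establishing the relevant corner of the block Kleene-star identity, i.e.\ that for a $2\times 2$ matrix of relations with blocks $a,b,c,d$ the top-left block of its star is $(a\cup b\,d^{\ast}c)^{\ast}$. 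I would prove this by induction on path length, using that every $X$-to-$Y$ path decomposes uniquely into maximal segments according to which of $U$, $V$ it is currently in, or else cite it as the well-known confluence property of relational feedback.

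Finally, I would remark that the whole verification can be bypassed at a higher level: $(\mathbf{Rel},\uplus)$ is a partially additive category --- countable unions of relations are always defined --- and a general result on traces of such categories (in the spirit of \cite{HS05}) supplies a canonical trace given precisely by the particle-style ``execution formula'', which unfolds to the displayed expression. The elementary argument above is preferable here only because it keeps everything inside the matrix calculus already set up for $\mathbf{Int}(\mathbf{Rel})$.
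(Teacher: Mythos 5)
Your proposal is correct and follows essentially the same route as the paper, which simply defers to Joyal--Street--Verity and notes that the verification ``relies heavily on the properties of the Kleene star of relations in endomorphism monoids'' --- exactly the matrix-calculus/Kleene-star argument you outline, with Vanishing~II resting on the block-star identity and dinaturality on the sliding identity $g(hg)^{\ast}=(gh)^{\ast}g$. Your sketch is in fact more detailed than the paper's one-line citation, and the correct identification of Vanishing~II as the only substantive step is exactly right.
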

\begin{proof} The proof that this operation satisfies the axioms of Definition \ref{Trace-def} is given in \cite{JSV}, and relies heavily on the properties of the Kleene star (i.e. reflexive transitive closure) of relations in endomorphism monoids.
	\end{proof}

\begin{remark} In \cite{SA96}, concrete examples of categorical traces were described as belonging to one of two classes : either `particle-style' (based on iteration or feedback), or `wave-style' (based on fixed-points). The above trace on $({\bf Rel},\uplus)$ is the canonical example of a particle-style trace.
	\end{remark}  

\section{A matrix formalism for partial injections}
As they are monoidal subcategories of $({\bf Rel},\uplus )$, both $({\bf pFun},\uplus)$ and $({\bf pInj},\uplus)$ admit matrix representations of arrows.  However, unlike $\bf Rel$, their homsets are not closed under arbitrary unions, so some care is needed when using a matrix formalism. The case of partial functions was covered by Manes \& Arbib in their study of algebraic program semantics \cite{MA}, and based on this, necessary and sufficient conditions for a matrix of partial injections to represent a partial injection was given in \cite{PHD}.
 
\begin{proposition}\label{pInjmat-prop}
	Given $X,V,Y,U\in Ob({\bf pInj})$, the matrix representations of partial injections in ${\bf pInj}(X\uplus V , Y\uplus U)$ are precisely those matrices $\small \left( \begin{array}{cc} a & b \\ c & d \end{array} \right)$ whose entries are partial injections such that the following diagrams 
	commute:
	\[ \xymatrix{ 
		Y \ar[d]_{c^\ddagger}	& X \ar[l]_{a} \ar[d]^{b}\ar[dl]|{0_{XV}}				 	& & & Y & X\ar[l]_{a} \\  
		V  						& U \ar[l]^{d^\ddagger} 										& & & Y\ar [u]^{c} & U \ar[u]_{b^\ddagger} \ar[l]^{d^\ddagger} \ar[ul]|{0_{UY}} \\  
	}
	\]
\end{proposition}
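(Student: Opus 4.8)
The plan is to reduce the statement to the elementary characterisation of partial injections inside ${\bf Rel}$ and then unwind it through the matrix calculus of Proposition~\ref{matrixcomp-prop}. An arrow $F$ of ${\bf Rel}$ lies in ${\bf pInj}$ precisely when both $F$ and its converse $F^\ddagger$ are partial functions, equivalently when the relations $FF^\ddagger$ and $F^\ddagger F$ are each contained in the identity relation on the relevant object. It therefore suffices to write these two containments out blockwise and see which constraints on the entries $a,b,c,d$ they impose.

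First I would record the two facts that make the matrix calculus do the work: the `Kronecker' identities $\pi_i\iota_j = \delta_{ij}$ (already noted in the proof of Proposition~\ref{matrixcomp-prop}) together with $\iota_0\pi_0\cup\iota_1\pi_1 = 1$, and the observation that passing to the converse turns a matrix $\left(\begin{smallmatrix}a & b\\ c & d\end{smallmatrix}\right)$ into its `conjugate transpose' $\left(\begin{smallmatrix}a^\ddagger & c^\ddagger\\ b^\ddagger & d^\ddagger\end{smallmatrix}\right)$, since $(F^\ddagger)_{ij} = \pi_i F^\ddagger\iota_j = (\pi_j F\iota_i)^\ddagger = (f_{ji})^\ddagger$. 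With these in hand, $FF^\ddagger$ and $F^\ddagger F$ are computed by the matrix-multiplication formula of Proposition~\ref{matrixcomp-prop} (union playing the role of addition, relational composition that of product).

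Next I would read off the conditions. Each diagonal block of $FF^\ddagger$ (respectively $F^\ddagger F$) is a union of terms $p\,p^\ddagger$ (respectively $p^\ddagger p$) over the entries $p\in\{a,b,c,d\}$; since a union of sub-identities of a fixed identity is again a sub-identity, the diagonal blocks lie below the identity exactly when each entry is single-valued (from $FF^\ddagger$) and injective (from $F^\ddagger F$) --- that is, exactly when each of $a,b,c,d$ is a partial injection, recovering the hypothesis of the proposition. The genuine content is in the off-diagonal blocks, which must vanish: each is a union of two composites of the shape $q\,p^\ddagger$ (for $FF^\ddagger$, with $p,q$ the entries sharing a given \emph{source} summand) or $p^\ddagger q$ (for $F^\ddagger F$, with $p,q$ sharing a given \emph{target} summand). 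A one-line relational computation gives $q\,p^\ddagger = 0 \iff \mathrm{dom}(p)\cap\mathrm{dom}(q) = \emptyset$ and $p^\ddagger q = 0 \iff \mathrm{ran}(p)\cap\mathrm{ran}(q)=\emptyset$. So the vanishing of the four off-diagonal blocks says exactly that the two entries out of each source summand have disjoint domains and the two entries into each target summand have disjoint ranges; rewritten as `such-and-such composite is a zero arrow', these four equations are precisely the four commuting triangles appearing (two apiece) in the two displayed diagrams. Combining the diagonal and off-diagonal information yields `$F$ is a partial injection $\iff$ the entries are partial injections and the diagrams commute'.

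The argument is essentially bookkeeping, and one half of it --- single-valuedness, i.e. the $FF^\ddagger$ containment, which is the $\mathbf{pFun}$ statement --- is already covered by Manes \& Arbib \cite{MA}; the new point (following \cite{PHD}) is only that also demanding single-valuedness of the converse contributes the range-disjointness triangles. The one place to take care --- and the only real obstacle --- is keeping the types of the four entries and the directions of the various converses straight, so that every composite in the block computation is an arrow between exactly the objects named in the diagrams and the relevant zero arrows come out as $0_{XV}$ and $0_{UY}$ rather than as their transposes; a single source/target index flipped here silently replaces a correct condition by a vacuous one.
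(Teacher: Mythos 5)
Your proposal is correct and is essentially the argument the paper outlines (citing \cite{PHD}): the paper packages the bookkeeping as a lemma on when a union of partial injections is again a partial injection, with the diagrams supplying the `disjoint' special case $f_i^\ddagger f_j = 0 = f_i f_j^\ddagger$, while you work directly with the equivalent characterisation $FF^\ddagger\subseteq 1$, $F^\ddagger F\subseteq 1$ computed blockwise --- the same computation in different clothing. Your closing caution about source/target indices is well taken, since the paper's second displayed diagram itself appears to carry exactly such a slip.
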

\begin{proof}[Outline] {\em This was first proved in \cite{PHD}. The key points of this proof are given below, in order to give some insight into the structure of matrix representations within $\bf pInj$.}\\ 
The starting point is the observation that, given a family of partial injections $\{ f_j\}_{j\in J} \in {\bf pInj}(X,Y)$, their union $\bigcup_{j\in J} f_j\in {\bf Rel}(X,Y)$ is a partial injection iff $f_i^{\ddagger}f_j$ and $f_ig_j^{\ddagger}$ are idempotent, for all $i,j\in J$.
An important special case is when $f_i^{\ddagger}f_j=0_X$ and $f_if_j^{\ddagger}=0_Y$, so $f_i\cap f_j = \emptyset$, and hence the union $\bigcup_{j\in J}f_j$ is trivially a partial injection. 

Now consider the above matrix of partial injections. The commutativity of the above two diagrams is equivalent (up to the appropriate inclusions) to this condition. In the other direction, the projection / injection arrows impose this condition when writing down the matrix form of a partial injection. 
\end{proof}

\begin{definition}\label{rooksquare-def}
The above condition is sometimes known as the {\bf rook matrix condition}, since it states that elements in the same row of a matrix have disjoint images, and elements in the same column have disjoint domains. This description is more useful for matrix calculations within $\bf pInj$, but the characterisation as commuting squares is more useful when working with the compact closed category $\bf Int(pInj)$. 

We therefore refer to a square $\vcenter{\vbox{\small \xymatrix{ Y & X \ar[l]_{a} \ar[d]^b \\ V \ar[u]^c \ar[r]_d & U }} }$ of arrows in $\bf pInj$ as a {\bf rook square} when the two diagrams of Proposition \ref{pInjmat-prop} above commute.
\end{definition}

As a corollary of the above characterisation, it was shown in \cite{PHD} that $({\bf pInj},\uplus)$ is closed under the trace described in Theorem \ref{reltrace-thm}.
\begin{theorem}
	The symmetric monoidal category $({\bf pInj} ,\uplus)$ is traced, with the trace given by 
	\[ Tr^U_{X,Y}\left( \begin{array}{cc} f_{00} & f_{01} \\ f_{10} & f_{11} \end{array}\right) \  = \  f_{00} \cup \bigcup_{j=0}^\infty \left(f_{01}f_{11}^jf_{10}\right) \]
\end{theorem}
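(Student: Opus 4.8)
The plan is to verify that the proposed formula for the trace on $({\bf pInj},\uplus)$ is well-defined and then invoke the fact that it is the restriction of the trace on $({\bf Rel},\uplus)$ from Theorem \ref{reltrace-thm}, so that the trace axioms are inherited for free. First I would observe that the two expressions agree as relations: since the reflexive transitive closure $\bigcup_{j=0}^\infty f_{11}^j$ equals $1_U \cup \bigcup_{j=1}^\infty f_{11}^j$, and $f_{00}$ is a summand, the relation $f_{00} \cup f_{01}\left( \bigcup_{j=0}^\infty f_{11}^j \right) f_{10}$ from Theorem \ref{reltrace-thm} is exactly $f_{00} \cup \bigcup_{j=0}^\infty \left(f_{01}f_{11}^jf_{10}\right)$. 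So the content of the theorem is precisely that this relation is a \emph{partial injection} whenever the input matrix satisfies the rook matrix condition of Definition \ref{rooksquare-def}, and that $({\bf pInj},\uplus)$ is then closed under the ambient trace.

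The key step is therefore closure. Using the criterion recalled in the outline proof of Proposition \ref{pInjmat-prop} — that a union $\bigcup_{j\in J} g_j$ of partial injections is again a partial injection iff $g_i^\ddagger g_j$ and $g_i g_j^\ddagger$ are idempotent for all $i,j$ — I would set $g_{-1} = f_{00}$ and $g_j = f_{01}f_{11}^jf_{10}$ for $j \geq 0$, and check the pairwise conditions. Each $g_j$ is visibly a partial injection (a composite of partial injections, using that $f_{11}^j$ is a partial injection). The rook square condition supplies the disjointness: $f_{00}$ and $f_{01}$ have disjoint domains (same column $X$), $f_{00}$ and $f_{10}$ have disjoint images (same row $Y$), $f_{10}$ and $f_{11}$ share a domain and $f_{01}, f_{11}$ share an image, etc. The cleanest route is to show the $g_j$ are in fact \emph{pairwise disjoint} (the stronger special case $g_i \cap g_j = \emptyset$ noted in the outline), by tracking, for a putative common pair $(y,x) \in g_i \cap g_j$ with $i < j$, the intermediate elements along the two feedback paths of lengths $i$ and $j$ through $U$; the rook conditions on the column $U$ (domains of $f_{10}$ and $f_{11}$ disjoint) and row $U$ (images of $f_{01}$ and $f_{11}$ disjoint) force the two paths to coincide, hence $i=j$. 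Disjointness of $g_{-1}=f_{00}$ from every $g_j$ ($j\geq 0$) is immediate from the column/row disjointness of $f_{00}$ against $f_{01}$ and against $f_{10}$. Once disjointness is in hand, the union is automatically a partial injection.

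It then remains to note that naturality, dinaturality, and the four trace axioms (Vanishing I, Vanishing II, Yanking, Superposing) hold for this operation, but this is automatic: they hold in $({\bf Rel},\uplus)$ by Theorem \ref{reltrace-thm}, $({\bf pInj},\uplus)$ is a symmetric monoidal subcategory of $({\bf Rel},\uplus)$ on the same objects, and we have just shown the trace of a $\bf pInj$-matrix lands back in $\bf pInj$; so the restriction is a trace in the sense of Definition \ref{Trace-def}. I expect the main obstacle to be the disjointness bookkeeping in the closure step — carefully chasing an element through two feedback loops of different lengths and using exactly the right instance of the rook matrix condition at each stage — rather than anything conceptual; the citation to \cite{PHD} can absorb the fully spelled-out combinatorics if desired.
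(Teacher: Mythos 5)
Your overall strategy -- identify the formula with the restriction of the $\bf Rel$-trace of Theorem \ref{reltrace-thm}, prove that $\bf pInj$ is closed under it, and then inherit naturality, dinaturality and the four axioms for free because they are equations between composites already known to hold in the ambient category -- is the right one, and is in substance the argument the paper delegates entirely to its references (its own ``proof'' is a bare citation to \cite{PHD,HA,HS,AHS}). The identification of the two expressions via distributivity of relational composition over unions is also correct, as is the reduction of the whole theorem to the single closure claim.

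There is, however, one step that fails as stated. You reduce closure to showing that $g_{-1}=f_{00}$ and $g_j=f_{01}f_{11}^jf_{10}$ are pairwise disjoint as subsets of $Y\times X$ and then assert that ``once disjointness is in hand, the union is automatically a partial injection.'' That implication is false: $\{(y_1,x)\}$ and $\{(y_2,x)\}$ with $y_1\neq y_2$ are disjoint partial injections whose union is not even a partial function. The ``important special case'' in the outline of Proposition \ref{pInjmat-prop} is not $g_i\cap g_j=\emptyset$ but the strictly stronger pair of conditions $g_ig_j^\ddagger=0$ and $g_i^\ddagger g_j=0$, i.e.\ pairwise disjoint \emph{domains} and pairwise disjoint \emph{images}; graph-disjointness is merely a consequence. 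The repair stays inside your own argument: run the path-chase on a common domain point rather than a common pair (if $x\in \mathrm{dom}(g_i)\cap \mathrm{dom}(g_j)$ with $i<j$, then $u_i=f_{11}^i(f_{10}(x))$ lies in $\mathrm{dom}(f_{01})\cap \mathrm{dom}(f_{11})$, contradicting the column-$U$ condition), and dually on a common image point (injectivity of $f_{01}$ and of $f_{11}^i$ forces a point of $\mathrm{im}(f_{10})\cap \mathrm{im}(f_{11})$, contradicting the row-$U$ condition); disjointness of $f_{00}$ from the $g_j$ uses column $X$ for domains and row $Y$ for images. Note also that you have systematically transposed the rook conditions relative to the paper's convention $f_{ij}=\pi_iF\iota_j$: it is $f_{01}$ and $f_{11}$ (same column, source $U$) that have disjoint domains, and $f_{10}$ and $f_{11}$ (same row, target $U$) that have disjoint images, not the other way round. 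With those two corrections the closure step is complete and the inheritance argument finishes the proof as you describe.
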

\begin{proof} This was first given in \cite{PHD}, but also proved independently in \cite{HA,HS,AHS}.
\end{proof}

\begin{remark} 	Unlike the case of $\bf Rel$, it is inaccurate to write the above trace in $\bf pInj$ in terms of the Kleene star, as $f_{00} \cup f_{01}\left( \bigcup_{j=0}^\infty f_{11}^j\right) f_{10}$. The reflexive transitive closure $\left( \bigcup_{j=0}^\infty f_{11}^j\right)$ is in general simply a relation, and not a partial injection. 
\end{remark}

\section{The compact closed category $\bf Int(pInj)$}
We now describe the compact closed category that results from applying Joyal, Street, \& Verity's $\bf Int$ construction to the traced category $({\bf pInj},\uplus)$ of partial injections. We emphasise that this is simply the explicit description of $\bf Int(Rel)$, as found in \cite{JSV}, restricted to the traced symmetric monoidal subcategory of partial injections.

\begin{definition}\label{IPdef}
	Plugging in the traced monoidal category $(\bf pInj , \uplus)$ into the $\bf Int$ construction of \cite{JSV} gives the following:
	
	\begin{description}
		\item[{\bf Objects}] 
		These are pairs of objects of $\bf pInj$ (i.e. pairs of sets). 
		\item [{\bf Arrows}]
		Homsets are given by ${\bf Int(pInj)}((X,U),(Y,V))={\bf pInj}(X\uplus V,Y\uplus U)$. Arrows of ${\bf pInj}(X\uplus V,Y\uplus U)$ are given as $(2\times 2)$ matrices of partial injections satisfying the rook matrix condition such as $\left( \begin{array}{cc} f_{00} & f_{01} \\ f_{10} & f_{11} \end{array}\right) \in {\bf Rel}(X\uplus V , Y \uplus U)$.  Following [Joyal et al. 96], we draw the corresponding arrow of ${\bf Int(pInj)}((X,U),(Y,V))$ as a (planar) graphical 4-tuple of partial injections satisfying the rook square condition (Definition \ref{rooksquare-def}), as follows:
		\scalebox{0.8}{$\vcenter{
				\vbox{ \small
					\xymatrix{  Y & X \ar[l]_{f_{00}} \ar[d]^{f_{10}} \\ V \ar[u]^{f_{01}} \ar[r]_{f_{11}} & U  }
				} 
			}$.}
		\item [{\bf Composition}] To compose such digraphs, we simply glue then along their common edge, followed by taking the union over all paths with identical source \& target: 
		\begin{center}
			\scalebox{0.8}{
				\xymatrix{ 
					Z 												&& Y \ar[ll]_{g_{00}}\ar@/_9pt/[dd]|{g_{10}}			&&X \ar[ll]_{f_{00}} \ar@/_9pt/[dd]|{f_{10}}				&& && 	Z																																																				&&											&& X	 \ar[llll]_{\bigcup_{j=0}^\infty g_{00}\left( f_{01}g_{10}\right)^j f_{00}}  \ar@/_9pt/[dd]|{ f_{10} \cup \ \bigcup_{j=0}^\infty f_{11}g_{10} \left( f_{01}g_{10}^j \right) f_{00}}		\\
					&&														&&															&& && 																																																					&&											&& 																		\\		
					W \ar@/_9pt/[uu]|{g_{01}} \ar[rr]_{g_{11}}		&& V\ar@/_9pt/[uu]|{f_{01}} \ar[rr]_{f_{11}}			&&U 														&& && 	W \ar@/_9pt/[uu] |{ g_{01} \cup \ \bigcup_{j=0}^\infty g_{00}f_{01} \left( g_{10}f_{01}^j \right) g_{11}}  \ar[rrrr]_ {\bigcup_{j=0}^\infty f_{11}\left( g_{01}f_{10}\right)^j g_{11}} 							&&											&& U																		\\
				}
			} 
		\end{center}
		\item [{\bf Monoidal tensor}]
		The tensor $\_ \Box \_$ is defined on objects by $(X,U)\Box(X',U')=(X\uplus X',U'\uplus U)$ and on arrows by:
		\begin{center}
			\scalebox{0.8}{$
				\vcenter{
					\vbox{ \small
						\xymatrix{  Y & X \ar[l]_{a} \ar[d]^{c} \\ V \ar[u]^{b} \ar[r]_{d} & U  }
					} 
				} \ {\large{\Box}} \ 
				\vcenter{
					\vbox{ \small
						\xymatrix{  Y' & X' \ar[l]_{a'} \ar[d]^{c'} \\ V' \ar[u]^{b'} \ar[r]_{d'} & U'  }
					} 
				}
				\ = \ 
				\vcenter{
					\vbox{ \small
						\xymatrix@R+1.2pc@C+2.3pc{  Y\uplus Y' & X\uplus X' \ar[l]|{\tiny \left(\begin{array}{cc} a & 0 \\ 0 & a' \end{array}\right) } \ar[d]|{\tiny \left(\begin{array}{cc} 0 & c' \\ c & 0 \end{array}\right) } \\ V'\uplus V \ar[u]|{\tiny \left(\begin{array}{cc} 0 & b \\ b' & 0 \end{array}\right)}  \ar[r]|{\tiny \left(\begin{array}{cc} d' & 0 \\ 0 & d \end{array}\right) } & U'\uplus U  }
					} 
				}
				$}
		\end{center}
		\item [{\bf The unit object}] This is simply the object $( \emptyset,\emptyset)$. 
		\item [{\bf The dual}] 
		The contravariant dual $( \ )^*:{\bf Int(pInj)}^{op}\rightarrow {\bf Int(pInj)}$ is defined on objects by $(X,U)^*=(U,X)$, and on arrows by 
		\scalebox{0.8}{$
			\left( \vcenter{
				\vbox{ \small
					\xymatrix{  Y & X \ar[l]_{a} \ar[d]^{c} \\ V \ar[u]^{b} \ar[r]_{d} & U  }
				} 
			} 
			\right)^*\ \  {\large = } \ \  
			\vcenter{
				\vbox{ \small
					\xymatrix{  U & V \ar[l]_{d} \ar[d]^{b} \\ X \ar[u]^{c} \ar[r]_{a} & Y  }
				} 
			}
			$}. 
		\item [{\bf The compact closed structure}] The unit and co-unit arrows for the compact closed structure, 
		\[ \eta_{(X,U)}: (\emptyset,\emptyset) \rightarrow (X,U)\Box (X,U)^* \ \mbox{ and } \ \epsilon_{(X,U)}: (X,U)^*\Box(X,U)\rightarrow (\emptyset,\emptyset) \] are given by, respectively:
		
		\[
		\vcenter{
			\vbox{ 
				\xymatrix{  
					X\uplus U & \emptyset \ar[l]_{0} \ar[d]^{0} \\ 
					U\uplus X \ar[u]^{{\tiny \left(\begin{array}{cc} 0 & 1 \\ 1 & 0 \end{array}\right)}} \ar[r]_{0} & \emptyset  }
			} 
		} 
		\ \ \ \mbox{ and } \ \  \
		\vcenter{
			\vbox{ 
				\xymatrix{  
			\emptyset  & U \uplus X \ar[l]_{0} \ar[d]^{\tiny \left( \begin{array}{cc} 0 & 1 \\ 1 & 0\end{array} \right) } \\ 
			\emptyset \ar[u]^{0} \ar[r]_{0} & X\uplus U  }
			} 
		}
		\] 
	\end{description}
\end{definition} 

\begin{remark}\label{IntGoIsquares-rem}[On the formul\ae\ for composition]The seemingly intricate formula for the composition of two rook squares arises in a very natural way from the diagrammatic representation of matrix composition, described in Proposition \ref{matrixcomp-prop}. 

Taking the `summing over paths' intuition seriously, the formula for the trace of $({\bf Rel},\uplus)$, and hence $({\bf pInj},\uplus)$, arises by introducing a feedback loop to the matrix representation of an arrow, and again summing over paths as shown :
\[ 
\xymatrix{
Y							&	U	\ar@{-} `r[dr] `[dd]^{1_U} [dd]&	&					& Y		\\
							&										&	& \mbox{ giving : }	&	\\
X\ar[uu]^a\ar[uur]|<<<c		&	U\ar[uul]|<<<b \ar[uu]_d			&	&					& X\ar[uu]|{a \cup \bigcup_{j=0}^\infty bd^jc}	\\
							&										&	&					&   \\
					}
\]
	In $\bf pInj$, both composition and trace are given by `summing over paths' constructions. Bringing these together gives the formula, and formalism, for composition in $\bf Int(pInj)$;  consider arrows
	$F : (X,U)\rightarrow (Y,V)$ and $G : (Y,V)\rightarrow (Z,W)$ determined by rook matrices
	$\tiny \left( \begin{array}{cc} f_{00} & f_{01} \\ f_{10} & f_{11} \end{array}\right) $
	and $\tiny \left( \begin{array}{cc} g_{00} & g_{01} \\ g_{10} & g_{11} \end{array}\right)$ respectively. 
	The composite $GF\in  {\bf Int(pInj)}((X,U),(Z,W))$ is then given by summing over paths in the following diagram :
	\begin{center}
		\scalebox{0.7}{\small
			\xymatrix{
				Z												&							&												&	U				&&	\\
				Z \ar[u]										&							&	V \ar@{-} `r[dr] `[dddddd] [dddddd]			&					&&	\\
				&							&												&					&& \\
				Y \ar[uu]^{g_{00}} \ar[uurr]|(0.35){g_{10}}	&							&	W \ar[uu]_{g_{00}} \ar[uull]|(0.35){g_{01}}	&					&&	\\
				&							&												&					&&	\\			
				Y  \ar[uu]										&							&	U \ar[uuuuur]								&					&&	\\
				&							&												&					&&	\\
				X \ar[uu]^{f_{00}} \ar[uurr]|(0.35){f_{10}}	&							& V	\ar[uu]_{f_{00}} \ar[uull]|(0.35){f_{01}}	&					&&	\\
				X \ar[u]										&							&												&	W \ar[uuuuul]	&&	\\
			} 
		}
	\end{center}
	Untangling this(!) but keeping the overall directed graph topology gives precisely the `planar squares' formalism and composition described above. Thus, the rook squares formalism arises from taking the digraph representation of matrices of partial injections and re-drawing it in a planar manner. Similarly, the composition of IP is the natural `summing over paths' operation that -- unlike matrix composition -- preserves planarity\footnote{An interesting open question is how much of the $\bf Int$ construction (at least in the `particle-style' setting) may be thought of as `imposing planarity' on some matrix calculus.  We refer to \cite{SA08} for further connections between planarity and (particle-style) compact closure.}.
	
	Readers familiar with the conventions of \cite{SA96}, as well as those of \cite{JSV} may wish to verify that Abramsky's composition within $\bf GoI(pInj)$ may be drawn in similar terms, and corresponds to {\em vertical} rather than {\em horizontal} pasting of rook squares, as illustrated below : 
		\[
	\xymatrix{ 
		Y 								& X\ar [l]_{a} \ar [d]^b   								&&&  Y & 	&	&X \ar [lll]_{a \cup \bigcup_{j=0}^\infty c(fd)^jfb} \ar [dd]|(0.35){\bigcup_{j=0}^\infty g(df)^jb} \\
		V \ar [u]^c \ar @/^8pt/[r]^d	& U	\ar @/^8pt/[l]^f \ar [d]^g							&&&	 	& 	&	&\\
		Q \ar [u]^h \ar [r]_k			& P														&&&	Q \ar [rrr]_{k \cup \bigcup_{j=0}^\infty g(df)^jdh } \ar [uu]|(0.35){\bigcup_{j=0}^\infty c(fd)^jh} 	&	&  &P	
	}
	\]
Thus, at least at endomorphism monoids of self-dual objects (such as the reflexive, or strictly reflexive objects we discuss), it is reasonable to consider the compositions of \cite{JSV} and \cite{SA96} as distinct, but interacting, operations on the same underlying set (see Section \ref{fd-sect}). 
\end{remark}

\section{Self-similarity, and strict self-similarity in $\bf pInj$}\label{Nselfsim-sect}
We now exhibit self-similar objects of $({\bf pInj},\uplus)$, and demonstrate the semi-monoidal strictification of this self-similarity, as a first step towards exhibiting reflexive objects of $\bf Int(pInj)$, and strictifying this reflexivity.  

It is easy to find self-similar objects of $({\bf pInj},\uplus)$; any countably infinite set $D$ will suffice, and appropriate bijections between $D$ and $D\uplus D$ are well-illustrated by the Hilbert's familiar parable of the Grand Hotel. Following the conventions of \cite{GOI1,GOI2}, we will take the natural numbers as our canonical example, with self-similarity exhibited by the usual (bijective) Cantor pairing. 
\begin{definition}\label{Cantor-def} The {\bf Cantor pairing} is the bijection $\code : \mathbb N \uplus \mathbb N \rightarrow \mathbb N$ given by $\code  (n,i)=2n+1$. We denote its inverse by $\decode : \mathbb N \rightarrow \mathbb N\uplus \mathbb N$; explicitly,
		\[ \decode (n) = \left\{ \begin{array}{lcr} \left( \frac{n}{2} , 0 \right) & & n \mbox{ even,} \\ & & \\ \left(\frac{n-1}{2} , 1 \right) & & n \mbox{ odd.}\end{array} \right. \]
		
		This is closely related to the {\bf dynamical algebra} of  \cite{DR,GOI1,GOI2}, which  
		is the (inverse) submonoid of ${\bf pInj}(\N,\N)$ generated by the following partial injections
		\[ p(n)= \left\{ \begin{array}{lcr} 
			\frac{n}{2} & & n \ \mbox{ even } \\ & &  \\
			\mbox{undefined} & &n \ \mbox{ odd }
           \end{array}\right.	
           \ \ \ \ , \ \ \ \ q(n)= \left\{ \begin{array}{lcr} 
           \mbox{undefined} & & n \ \mbox{ even } \\ & &  \\
           \frac{n-1}{2} & & n \ \mbox{ odd }
           \end{array}\right.	
           \]
           together with their generalised inverses $p^\ddagger(n)=2n$ and $q^\ddagger (n)=2n+1$.  These satisfy the following key conditions :  
           	\begin{enumerate}
           	\item $pp^\ddagger = 1 = qq^\ddagger$
           	\item $pq^\ddagger = 0 =qp^\ddagger$
           	\item $p^\ddagger p \cup q^\ddagger q = 1$.
           \end{enumerate}
           \end{definition}
       \begin{remark} Conditions 1. and 2. above are the defining relations for the (two-generator) polycyclic monoid of \cite{NP}, and condition 3. is a natural condition on concrete representations of polycyclic monoids. This observation was made in \cite{PHD,MVL} and polycyclic monoids form a significant and active research area in inverse semigroup theory generally.
       	\end{remark}
       
       \begin{proposition} The generators of the dynamical algebra, and their generalised inverses, arise as composites of the Cantor pairing and the canonical projection / injection arrows associated with the disjoint union, as 
       	\[ p=\code\iota_0\ \ ,\ \ q=\code\iota_1 \ \ ,\ \ p^\ddagger= \pi_0\decode\ \ ,\ \ q^\ddagger=\pi_1\decode \]
       	\end{proposition}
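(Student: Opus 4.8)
The plan is to establish all four equalities as identities of partial injections in ${\bf pInj}(\mathbb{N},\mathbb{N})$, and the natural method is a direct elementwise verification from the definitions. Since a partial injection is completely determined by its graph, it suffices, for each of the four equations, to evaluate both sides on an arbitrary $n\in\mathbb{N}$ and check that the two sides are defined on the same subset of $\mathbb{N}$ and agree there. All the ingredients are already in place: the Cantor pairing $\code$ and its inverse $\decode$ from Definition \ref{Cantor-def}, together with the biproduct's injection and projection arrows $\iota_0,\iota_1$ and $\pi_0,\pi_1$, restricted from $\bf Rel$ to $\bf pInj$.

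First I would pin down, and then hold rigidly fixed, the composition convention inherited from $\bf Rel$, in which a composite $SR$ means ``apply $R$, then $S$''. I would also record once and for all the source and target of each of the four structural maps, namely $\code:\mathbb{N}\uplus\mathbb{N}\to\mathbb{N}$, $\decode:\mathbb{N}\to\mathbb{N}\uplus\mathbb{N}$, $\iota_0,\iota_1:\mathbb{N}\to\mathbb{N}\uplus\mathbb{N}$ and $\pi_0,\pi_1:\mathbb{N}\uplus\mathbb{N}\to\mathbb{N}$. This typing is exactly what guarantees that each of $\code\iota_0$, $\code\iota_1$, $\pi_0\decode$, $\pi_1\decode$ is a well-formed endomorphism of $\mathbb{N}$ in the first place, and hence a candidate to be compared with a generator of the dynamical algebra.

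I would then carry out the four routine evaluations, which split naturally into the two injection-composites $\code\iota_0,\code\iota_1$, recovering the generators $p$ and $q$, and the two projection-composites $\pi_0\decode,\pi_1\decode$, recovering their generalised inverses $p^\ddagger$ and $q^\ddagger$. In each case the comparison against the piecewise formulas of Definition \ref{Cantor-def} reduces to the same ``$n$ even / $n$ odd'' case-split that already defines $\decode$, so matching both the domain of definition and the value at each point is immediate once the convention of the previous step has been fixed.

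The hard part here is not computation but bookkeeping: the entire content of the statement lies in keeping the sources, targets, and order of composition straight. Reading the composites in the opposite order, or confusing $\iota_i$ with $\pi_i$ or $\code$ with $\decode$, would interchange each generator with its generalised inverse and so produce precisely the transposed family of identities. Flagging and controlling this convention is therefore the single step that demands genuine care; once it is fixed, the four equalities follow by inspection of the definitions, and the proposition is seen to express nothing more than that the Cantor pairing, pre- and post-composed with the structural maps of the disjoint union, reconstructs Girard's dynamical-algebra generators and their inverses.
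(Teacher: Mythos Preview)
Your proposal is correct and takes essentially the same approach as the paper: the paper's own proof simply states that ``This may be verified almost instantly by direct calculation,'' which is precisely the elementwise verification from the definitions that you outline, and then remarks that the result is a special case of a general connection between polycyclic monoids and categorical projections/injections. Your explicit emphasis on fixing the composition convention and tracking sources/targets is well-placed, since (as you note) getting these wrong transposes each generator with its generalised inverse.
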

       \begin{proof} This may be verified almost instantly by direct calculation. It is also a special case of a more general connection between polycyclic monoids and categorical projections / injections described in \cite{TAC}.
       	\end{proof} 
       
       \begin{lemma}\label{isoendo_lem} The endomorphism monoids of $\N$ and $\N\uplus \N$ in $\bf pInj$ are isomorphic. 
\end{lemma}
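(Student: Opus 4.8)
The plan is to exploit the fact, recorded in Definition \ref{Cantor-def}, that $\N$ is a self-similar object of $({\bf pInj},\uplus)$: the Cantor pairing $\code:\N\uplus\N\rightarrow\N$ is a genuine bijection of sets, hence an isomorphism in ${\bf pInj}$, with two-sided inverse $\decode:\N\rightarrow\N\uplus\N$ satisfying $\code\,\decode=1_\N$ and $\decode\,\code=1_{\N\uplus\N}$. Since isomorphic objects of any category have isomorphic endomorphism monoids --- via conjugation by the witnessing isomorphism --- the statement follows at once; but I would give the isomorphism explicitly, since the resulting identification is used repeatedly in the matrix calculus of the concrete examples that follow.

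First I would define $\Phi:{\bf pInj}(\N,\N)\rightarrow{\bf pInj}(\N\uplus\N,\N\uplus\N)$ by $\Phi(f)=\decode\,f\,\code$, with composition read as in ${\bf pInj}$ (right to left): $\code$ carries $\N\uplus\N$ to $\N$, then $f$ is an endomorphism of $\N$, then $\decode$ carries $\N$ back to $\N\uplus\N$, so $\Phi(f)$ indeed lies in ${\bf pInj}(\N\uplus\N,\N\uplus\N)$. Next I would verify that $\Phi$ is a monoid homomorphism: $\Phi(f)\,\Phi(g)=(\decode\,f\,\code)(\decode\,g\,\code)=\decode\,f\,(\code\,\decode)\,g\,\code=\decode\,f\,g\,\code=\Phi(fg)$, using $\code\,\decode=1_\N$, while $\Phi(1_\N)=\decode\,\code=1_{\N\uplus\N}$. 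Finally I would exhibit $\Psi(F)=\code\,F\,\decode$ as a two-sided inverse, by the symmetric computation using $\decode\,\code=1_{\N\uplus\N}$, namely $\Psi(\Phi(f))=\code\,\decode\,f\,\code\,\decode=f$ and $\Phi(\Psi(F))=\decode\,\code\,F\,\decode\,\code=F$; hence $\Phi$ is a monoid isomorphism.

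There is essentially no obstacle here: the only points requiring care are the bookkeeping of sources and targets and the convention for composition order, together with (if one wants to be fully self-contained) a one-line check from the displayed formulas of Definition \ref{Cantor-def} that $\code$ and $\decode$ are mutually inverse bijections of sets. One could alternatively obtain the statement as an immediate instance of the semi-monoidal equivalence of Theorem \ref{ss-coh-thm}, but the direct conjugation argument is shorter and makes the identification ${\bf pInj}(\N,\N)\cong{\bf pInj}(\N\uplus\N,\N\uplus\N)$ explicit, which is exactly what is needed for the $2\times 2$ matrix formalism in the sequel.
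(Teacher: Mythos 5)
Your proposal is correct and is essentially the paper's own argument: the isomorphism is given by conjugation by the mutually inverse code/decode arrows of Definition \ref{Cantor-def}. You simply spell out the routine verification that conjugation by an isomorphism yields a monoid isomorphism, which the paper leaves implicit.
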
 
\begin{proof} The isomorphisms between them are given by conjugation by the code / decode arrows, as  : 
\begin{itemize}
	\item $\code (\_ ) \decode :  {\bf pInj} (\N\uplus \N,\N\uplus \N) \rightarrow {\bf pInj} (\N\uplus \N,\N\uplus \N)$ 
	\item $\decode ( \_ )\code :  {\bf pInj} (\N,\N) \rightarrow  {\bf pInj} (\N\uplus \N,\N\uplus \N)$
\end{itemize}
\end{proof}

The interpretation of the dynamical algebra as composites of the Cantor pairing and projections / injections then allows us to make the link with matrix representations of arrows.

\begin{corollary}\label{dynamicisoendo-corol}
	Let $\small \left( \begin{array}{cc}a & b \\ c & d \end{array}\right) \in {\bf pInj}(\N\uplus \N,\N\uplus \N)$ be a rook matrix. Then
	\[ \code  \left( \begin{array}{cc}a & b \\ c & d \end{array}\right) \decode = p^\ddagger ap \cup p^\ddagger b q \cup q^\ddagger cp \cup q^\ddagger d q \in {\bf pInj}(\N,\N) \]
	Conversely, given $f\in {\bf pInj}(\N,\N)$, then its inverse image under the monoid isomorphism described above is the following rook matrix:
	\[ \decode f \code = \left( \begin{array}{cc} pfp^\ddagger & pfq^\ddagger \\ qfp^\ddagger & qfq^\ddagger \end{array} \right)\in {\bf pInj}(\N\uplus \N , \N \uplus \N) \]
\end{corollary}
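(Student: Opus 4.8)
The plan is to bootstrap both identities from two things already in hand: Lemma~\ref{isoendo_lem}, which says that conjugation by $\code$ and $\decode$ implements the monoid isomorphism ${\bf pInj}(\N,\N)\cong{\bf pInj}(\N\uplus\N,\N\uplus\N)$, and the biproduct structure of $({\bf Rel},\uplus)$ that underpins the matrix calculus of Proposition~\ref{matrixcomp-prop}. From the latter, a rook matrix $\left(\begin{array}{cc} a & b \\ c & d\end{array}\right)$ represents the arrow
\[ F \ = \ \iota_0\, a\, \pi_0 \ \cup\ \iota_0\, b\, \pi_1 \ \cup\ \iota_1\, c\, \pi_0 \ \cup\ \iota_1\, d\, \pi_1 \ \in\ {\bf pInj}(\N\uplus\N,\N\uplus\N), \]
and conversely its entries are recovered as $f_{ij}=\pi_i F\iota_j$; this is exactly the Kronecker-delta identity $\pi_i\iota_j=\delta_{ij}$ used in Proposition~\ref{matrixcomp-prop}.

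For the first identity I would substitute this expansion into $\code F\decode$ and use that relational composition distributes over unions, so that $\code F\decode=\bigcup_{i,j}(\code\iota_i)\,f_{ij}\,(\pi_j\decode)$. The preceding proposition expresses each of the four composites $\code\iota_0,\code\iota_1,\pi_0\decode,\pi_1\decode$ as a generator $p$ or $q$ of the dynamical algebra or its generalised inverse, so each of the four terms collapses to one of the words $p^\ddagger a p$, $p^\ddagger b q$, $q^\ddagger c p$, $q^\ddagger d q$, giving the asserted union in ${\bf pInj}(\N,\N)$. That the four summands are pairwise disjoint — so the union is honestly a partial injection — follows either from conditions~1--3 on these generators in Definition~\ref{Cantor-def}, or simply because the left-hand side manifestly is one.

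For the converse I would run the same computation in reverse: by Lemma~\ref{isoendo_lem} the preimage of $f\in{\bf pInj}(\N,\N)$ is $\decode f\code\in{\bf pInj}(\N\uplus\N,\N\uplus\N)$, and its matrix entries are $\pi_i(\decode f\code)\iota_j=(\pi_i\decode)\,f\,(\code\iota_j)$, which the same identities rewrite as $pfp^\ddagger$, $pfq^\ddagger$, $qfp^\ddagger$, $qfq^\ddagger$. The resulting matrix automatically satisfies the rook condition of Definition~\ref{rooksquare-def}, being the matrix of a genuine partial injection (or one checks it directly from $pq^\ddagger=0=qp^\ddagger$).

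The mathematical substance here is slight; the step I expect to take the most care is the bookkeeping — fixing the composition convention once and keeping it, so that $p,q$ and their generalised inverses land on the correct side of each entry, and verifying that the biproduct union really does commute with $\code/\decode$-conjugation exactly as the distributivity of relational composition over union demands. With the conventions nailed down, each half is a one-line distribute-and-substitute calculation.
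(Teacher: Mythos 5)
Your proposal is correct and is exactly the argument the paper intends: the corollary is stated without proof precisely because it follows from the biproduct reconstruction $F=\bigcup_{i,j}\iota_i f_{ij}\pi_j$ (with $f_{ij}=\pi_i F\iota_j$), distributivity of relational composition over union, and the preceding proposition identifying the composites $\code\iota_i$ and $\pi_j\decode$ with the dynamical-algebra generators and their generalised inverses. The only point of care is the one you already flag — which side of each entry $p,q,p^\ddagger,q^\ddagger$ land on under the paper's composition convention — and your target expressions $p^\ddagger a p$, etc., are the ones consistent with the explicit definitions of $\code$, $\decode$, $p$, and $q$.
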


\begin{remark}\label{rooksquarebij-rem} As well as the above monoid isomorphism between ${\bf pInj}(\N,\N)$ and ${\bf pInj}(\N\uplus \N,\N\uplus \N)$, there is also, by construction, a bijection between the rook squares of ${\bf Int(pInj)}((\N,\N),(\N,\N))$, and the rook matrices of ${\bf pInj}(\N\uplus \N,\N\uplus \N)$. 
	Thus, every arrow $f\in {\bf pInj}(\N,\N)$ uniquely determines and is determined by a rook square, with the correspondence given by $f\mapsto$ \scalebox{0.8}{$\vcenter{
			\vbox{ \small
				\xymatrix{  \N & \N \ar[l]_{pfp^\ddagger} \ar[d]^{qfp^\ddagger} \\ \N \ar[u]^{pfq^\ddagger} \ar[r]_{qfq^\ddagger} & \N  }
			} 
		}$.}
\end{remark}

\subsection{Strictifying self-similarity within $\bf pInj$}

We first establish some notation \& preliminary results :
\begin{definition}
	Let us denote by $({\bf pNat},\uplus,\emptyset)$ the full symmetric monoidal subcategory of $\bf pInj$ generated by the natural numbers, together with disjoint union. Note that $(\bf pNat ,\uplus)$ is also traced, with trace given by the trace of $({\bf pInj},\uplus)$.
\end{definition}

We now give an explicit description of the semi-monoidal strictification of self-similarity within $\bf pNat$.  We take the code and decode arrows $\code: \N \uplus \N \rightarrow \N$ and $\decode : N \rightarrow \N \uplus \N$ to be as given in Definition \ref{Cantor-def}, and similarly for the dynamical algebra.

Our starting point is a symmetric semi-monoidal tensor on the endomorphism monoid of $\N$; this is well-known as Girard's representation of multiplicative conjunction within \cite{GOI1,GOI2}.
\begin{definition}
	Given $f,g\in {\bf pNat}(\N,\N)$, we define their tensor $f\star g\in {\bf pNat}(\N,\N)$ by $f\star g \ = \ \code(f\uplus g)\decode$. 
	This may alternatively and equivalently be given in terms of the dynamical algebra, as $f\star g = p^\ddagger f p \cup q^\ddagger g q$. Explicitly, 
	\[ (f\star g)(n) \ = \ \left\{ \begin{array}{lcr} 2f\left(\frac{n}{2}\right) & & n \ \mbox{ even, and } \frac{n}{2}\in dom(f) \\ & \ \ \ \ & \\
	2g\left(\frac{n-1}{2}\right)+1 & & n \ \mbox{ odd, and }\  \frac{n-1}{2} \in dom(g) \\ && \\
	\bot & & \mbox{otherwise}
	\end{array}\right.
	\]
	The symmetry and associativity isomorphisms $\sigma,\tau\in {\bf pNat}(\N,\N)$ for this tensor  are given by $\sigma(n)\ = \left\{ \begin{array}{lcr} n+1 & \ \ & n \mbox{ even} \\ & & \\ n-1 & \ \ & n \mbox{ odd} \end{array}\right.$ and $ \tau(n)  \ = \ \left\{ \begin{array}{lcr} 2n & \ \ & n \ (mod \ 2)=0 \\  & \ \ & \\
	n+1 & \ \ & n \ (mod \ 4)=1 \\ 
	  & \ \ & \\
	\frac{n-1}{2} & \ \ & n \ (mod \ 4)=3 \\ 
	 \end{array}\right.$
	 
	 These also may be given explicitly, in terms of the dynamical algebra, as
	 \[ \sigma = p^\ddagger q \cup q^\ddagger p \ \ \mbox{ and } \ \ \tau=\left( p^\ddagger\right)^2 p \cup p^\ddagger q^\ddagger pq \cup q^\ddagger q^2 \]
	\end{definition}

\begin{theorem} The above operation $\_ \star \_$ is indeed a symmetric semi-monoidal tensor on the endomorphism monoid of $\N$, with associativity and symmetry maps as given above.
	\end{theorem}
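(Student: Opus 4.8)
The plan is to separate the \emph{existence} of the symmetric semi-monoidal structure from the \emph{identification} of the explicit formulas for $\sigma$ and $\tau$. For existence, I would note that $\N$ is a (non-unit) self-similar object of the symmetric traced monoidal category $({\bf pNat},\uplus)$, with code and decode bijections given by the Cantor pairing $\code$ and $\decode$ of Definition \ref{Cantor-def}. Point \ref{montensor-pt} of Theorem \ref{ss-coh-thm} then immediately yields that $f\star g = \code(f\uplus g)\decode$ is a semi-monoidal tensor on ${\bf pNat}(\N,\N)$: bifunctoriality with respect to relational composition follows from $\decode\code = 1_{\N\uplus\N}$, $\code\decode = 1_\N$ and bifunctoriality of $\uplus$, and its canonical associator satisfies MacLane's pentagon. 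Symmetry is then inherited from $({\bf pNat},\uplus)$: setting $\sigma = \code\,\sigma^{\uplus}_{\N,\N}\,\decode$, where $\sigma^{\uplus}_{\N,\N}$ is the summand-swap on $\N\uplus\N$, naturality of $\sigma^{\uplus}$ gives $\sigma(f\star g) = (g\star f)\sigma$, and $\sigma$ is involutive and makes the symmetry hexagon commute because $\sigma^{\uplus}$ does --- each such identity being obtained by conjugating the corresponding identity for $\uplus$ through the mutually inverse bijections $\code,\decode$. (Equivalently, one observes that the semi-monoidal equivalence of Point \ref{equiv-pt} of Theorem \ref{ss-coh-thm} is symmetric when, as here, the ambient category is.)

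It then remains only to check that the $\sigma$ and $\tau$ written in the statement are these canonical maps. For $\sigma$ this is a one-line parity check: feeding $\decode$ into $\sigma^{\uplus}_{\N,\N}$ and then $\code$ sends an even $n$ to $n+1$ and an odd $n$ to $n-1$, and a second pass --- decomposing $\sigma^{\uplus}_{\N,\N}$ through the injections and projections of $\uplus$ and using the identities relating these to $\code,\decode$ recorded earlier --- rewrites the same map as $p^{\ddagger}q \cup q^{\ddagger}p$. For $\tau$, I would first identify the canonical associator of $\star$ at the unique object as the re-bracketing bijection $\N\to\N$ obtained by transporting the associator $\alpha^{\uplus}$ of $\uplus$ through the two nested pairings; unwinding the definition of $\star$ with bifunctoriality of $\uplus$ shows this to be $\tau = b^{-1}\,\alpha^{\uplus}\,a$, where $a = (1_\N\uplus\decode)\decode : \N\to\N\uplus(\N\uplus\N)$ and $b = (\decode\uplus 1_\N)\decode : \N\to(\N\uplus\N)\uplus\N$. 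That this $\tau$ really is the associator is the assertion $\tau\,(f\star(g\star h)) = ((f\star g)\star h)\,\tau$ for all $f,g,h$, which again follows from naturality of $\alpha^{\uplus}$; note that the resulting function $\N\to\N$ is insensitive to whether or not $\uplus$ has been strictified, the strictification isomorphisms cancelling. Finally one evaluates $b^{-1}\alpha^{\uplus}a$ on a general $n$: two applications of $\decode$ sort $n$ according to its residue modulo $4$, and tracking the three cases --- $n$ even; $n$ odd with $\lfloor n/2\rfloor$ even; $n$ odd with $\lfloor n/2\rfloor$ odd --- through $\alpha^{\uplus}$ and back gives respectively $2n$, $n+1$ and $(n-1)/2$, which is the stated piecewise map; the dynamical-algebra form $(p^{\ddagger})^{2}p \cup p^{\ddagger}q^{\ddagger}pq \cup q^{\ddagger}q^{2}$ drops out of the same computation via the identities for $p,q,p^{\ddagger},q^{\ddagger}$.

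The genuinely laborious step --- and the only real obstacle --- is this last evaluation: one must pin down exactly which composite of associativity isomorphisms for $\uplus$ the coherence theorem produces (in particular the orientation of $\alpha^{\uplus}$ and which of $a,b$ is pre- versus post-composed), and then run the residue-modulo-$4$ case analysis without arithmetic slips. Everything else is forced once one knows, from \cite{JHRS}, that $\code$ and $\decode$ are mutually inverse and that the whole structure is transported from the symmetric traced category $({\bf pNat},\uplus)$.
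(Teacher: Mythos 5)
Your proposal is correct, and your arithmetic checks out: conjugating the summand-swap through $\code,\decode$ does give $n\mapsto n\pm 1$ according to parity (equivalently $p^{\ddagger}q\cup q^{\ddagger}p$), and evaluating $\code(\code\uplus 1)\,\alpha^{\uplus}\,(1\uplus\decode)\decode$ via the residue-mod-$4$ case split gives exactly the stated piecewise map and its dynamical-algebra form $(p^{\ddagger})^{2}p\cup p^{\ddagger}q^{\ddagger}pq\cup q^{\ddagger}q^{2}$. (Note in passing that the paper's $\code(n,i)=2n+1$ is a typo for $2n+i$; you have implicitly used the pairing consistent with the stated $\decode$, which is the intended one.)

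The comparison with the paper is slightly unusual in that the paper offers no proof content at all here: it simply cites \cite{PHD,TAC} for the result, \cite{RC} for ``explicit elementary arithmetic proofs of MacLane's pentagon and hexagon conditions'', and \cite{JHRS} for the interpretation as a strictification of self-similarity. Your argument is essentially the \cite{JHRS} route made explicit: rather than verifying the pentagon and hexagon by direct arithmetic on partial injections (the \cite{RC} route), you transport the entire symmetric semi-monoidal structure of $({\bf pNat},\uplus)$ along the mutually inverse bijections $\code,\decode$ using Point \ref{montensor-pt} of Theorem \ref{ss-coh-thm}, so that all coherence conditions are inherited for free and the only remaining work is to match the canonical transported maps against the explicit formulas in the statement. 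This is the cleaner of the two routes the paper gestures at, and it correctly isolates the one genuinely fiddly point, namely pinning down the orientation and bracketing conventions in $b^{-1}\alpha^{\uplus}a$ before running the case analysis. One small care point you handle correctly: Theorem \ref{ss-coh-thm} as stated only delivers a semi-monoidal tensor, not a symmetric one, so the symmetry isomorphism does need the separate (but routine) transport argument you give.
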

\begin{proof} This is well-established \cite{PHD,TAC}, and the interpretation as a semi-monoidal strictification of self-similarity is given in \cite{JHRS}. Explicit elementary arithmetic proofs of MacLane's pentagon and hexagon conditions are also given in \cite{RC}.
	\end{proof}

Now consider the semi-monoidal category $({\bf pNat}_{-I},\uplus)$ given by the `forgetting the unit' functor of Definition \ref{units_stuff-def}. As well as the above tensor, the strictification procedure of \cite{JHRS} gives, as described in Section \ref{SSstrict-sect},  a semi-monoidal equivalence of categories between $({\bf pNat}_{-I},\uplus)$ and $({\bf pNat(\N,\N)},\star)$. This proceeds as follows :
	
	\begin{definition}\label{Phifunctor-def}
		For all objects $X\in Ob({\bf pNat}_{-I})$, we define mutually inverse isomorphisms $C_X: X\rightarrow \N$ and $D_X:\N\rightarrow X$ inductively, by $C_\N=1_\N$, and for all $A,B\in Ob ({\bf pNat}_{-I})$,
	\[ C_{A\uplus B}=\code(C_A\uplus C_B) \ \ , \ \ D_{A\uplus B}=(D_A\uplus D_B)\decode \]
	We then define a semi-monoidal functor $\Phi : ({\bf pNat}_{-I},\uplus)  \rightarrow ({\bf pNat(\N,\N)},\star)$ by :
	\begin{description}
		\item[Objects] $\Phi(X)=\N$ for all $X\in Ob({\bf pNat})$.
		\item[Arrows] $\Phi(f)=C_B f D_A\in {\bf pNat(\N,\N)}$, for all $f\in {\bf pNat}(A,B)$
	\end{description}
\end{definition}

\begin{theorem} The functor $\Phi : ({\bf pNat}_{-I},\uplus)  \rightarrow ({\bf pNat(\N,\N)},\star)$ is a semi-monoidal equivalence of categories.
\end{theorem}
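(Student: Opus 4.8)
The plan is to verify directly the standard criteria for an equivalence of categories --- functoriality, faithfulness, fullness, and (essential) surjectivity on objects --- and then, separately, that $\Phi$ respects the semi-monoidal structure. Everything rests on one fact: for every object $X$ of $(\,{\bf pNat}_{-I},\uplus)$ the arrows $C_X : X\to\N$ and $D_X : \N\to X$ of Definition \ref{Phifunctor-def} are mutually inverse. I would prove this by induction on the construction of $X$ as an iterated disjoint union of copies of $\N$, the inductive step being $C_{A\uplus B}D_{A\uplus B} = \code\big((C_A D_A)\uplus(C_B D_B)\big)\decode = \code\decode = 1_\N$ and symmetrically $D_{A\uplus B}C_{A\uplus B} = 1_{A\uplus B}$, using that $\code,\decode$ are mutually inverse and that $\uplus$ is a functor.

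Granting this, the categorical half of the argument is essentially cancellation. Functoriality: $\Phi(1_A) = C_A D_A = 1_\N$, and for composable $f : A\to B$, $g : B\to C$ the inner pair $D_B C_B$ cancels to $1_B$, so $\Phi(g)\Phi(f) = C_C(gf)D_A = \Phi(gf)$. Faithfulness: $C_B f D_A = C_B f' D_A$ forces $f = f'$ since $C_B,D_A$ are isomorphisms. Fullness: given $h \in {\bf pNat}(\N,\N) = {\bf pNat}(\Phi A,\Phi B)$, the arrow $D_B h C_A : A\to B$ satisfies $\Phi(D_B h C_A) = (C_B D_B)\,h\,(C_A D_A) = h$. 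Finally $\Phi$ is surjective on objects, the target being the one-object category with object $\N = \Phi(\N)$. Hence $\Phi$ is an equivalence.

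For the semi-monoidal part I would first show $\Phi$ strictly preserves the tensor: expanding $C_{B\uplus B'} = \code(C_B\uplus C_{B'})$ and $D_{A\uplus A'} = (D_A\uplus D_{A'})\decode$ and applying functoriality of $\uplus$ gives $\Phi(f\uplus g) = \code\big((C_B f D_A)\uplus(C_{B'}g D_{A'})\big)\decode = \Phi(f)\star\Phi(g)$, which is exactly the defining formula for $\star$. It then remains to check that $\Phi$ carries the canonical associativity and symmetry isomorphisms of $\uplus$ to those of $\star$. For the associator this amounts to showing that $\Phi(\alpha_{A,B,C}) = C_{(A\uplus B)\uplus C}\,\alpha_{A,B,C}\,D_{A\uplus(B\uplus C)}$ collapses --- via naturality of $\alpha$ and the identities $C_X D_X = 1_\N$ --- to $\code(\code\uplus 1_\N)\,\alpha_{\N,\N,\N}\,(1_\N\uplus\decode)\,\decode$, which is precisely the canonical associator $\tau$ of $\star$ at $\N$ (matching the explicit $\tau$ recorded just above the statement and in Proposition \ref{notationabuse-prop}); the symmetry case is identical, collapsing $\Phi(\sigma_{A,B})$ to $\code\,\sigma_{\N,\N}\,\decode = \sigma$. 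This shows $\Phi$ is a strict symmetric semi-monoidal functor, which completes the proof.

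In truth there is little real difficulty here: the statement is the concrete instance of the coherence and strictification theorem for self-similarity, Theorem \ref{ss-coh-thm}, Point \ref{equiv-pt}, taking $(\,{\bf pNat},\uplus)$ for $(\C,\otimes)$, $\N$ for the self-similar object, and the Cantor pairing for $\code,\decode$ --- so one could alternatively just invoke that result once it is checked that the functor it produces coincides with $\Phi$. The only steps needing care are bookkeeping ones: confirming that the semi-monoidal subcategory generated by $\N$ is exactly ${\bf pNat}_{-I}$ (disjoint unions of nonempty sets are nonempty, so no unit object reappears, and objects may be identified with the planar binary trees along which $C_X, D_X$ are defined), and matching the collapsed expression for $\Phi$ on the rebracketing isomorphism with the particular arithmetic normal form of $\tau$.
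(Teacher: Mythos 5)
Your proof is correct, but it takes a genuinely different route from the paper's: the paper disposes of this theorem in one line, by declaring it ``immediate from the strictification procedure of \cite{JHRS}'' (i.e.\ the general coherence theorem for self-similarity, Theorem \ref{ss-coh-thm}, Point \ref{equiv-pt}, specialised to $({\bf pNat},\uplus)$, the object $\N$, and the Cantor pairing), whereas you carry out the explicit verification from scratch. Your computation is sound throughout: the inductive proof that $C_X$ and $D_X$ are mutually inverse, the cancellation arguments for functoriality, fullness and faithfulness, the identity $\Phi(f\uplus g)=\code\bigl((C_BfD_A)\uplus(C_{B'}gD_{A'})\bigr)\decode=\Phi(f)\star\Phi(g)$, and the collapse of $\Phi(\alpha_{A,B,C})$ via naturality to $\code(\code\uplus 1_\N)\,\alpha_{\N,\N,\N}\,(1_\N\uplus\decode)\,\decode$, which does evaluate to the arithmetic normal form of $\tau$ recorded in the text (a case check on $n$ modulo $4$ confirms this). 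What your approach buys is a self-contained, checkable argument that also makes visible exactly where the explicit formulas for $\tau$ and $\sigma$ come from; what the paper's approach buys is brevity and the reassurance that nothing here is specific to $\bf pNat$ --- it is the generic strictification of self-similarity instantiated at one example, which is precisely the point the paper wants to emphasise. Your closing paragraph correctly identifies this, so the two routes converge; the only caution I would add is that since you opt for the direct route, the verification of the canonical-isomorphism clause (that $\Phi$ sends $\alpha$ and the symmetry to the designated $\tau$ and $\sigma$ of $\star$, not merely to some isomorphisms) is a genuine obligation and not mere bookkeeping --- you do discharge it, but it is the one step that would be easy to wave away and should not be.
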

\begin{proof}
	This is immediate from the strictification procedure of \cite{JHRS} and heavily prefigured (albeit without the interpretation as a semi-monoidal equivalence of categories) in \cite{PHD,TAC}.
\end{proof}

We may now adjoint a strict unit to the above categories, as described in Definition \ref{units_stuff-def}. This results in the the following categories :
\begin{itemize}
	\item $({\bf pNat(\N,\N)},\star)_{+I}$, the above semi-monoidal category with a strict unit adjoined.
	\item $(\bf pNat,\uplus)_{-\eafw}$, the de-elemented version of $(\bf pNat,\uplus)$.
\end{itemize}

\begin{corollary}\label{tracedmonoid-corol}  \ \\
	\begin{enumerate}
		\item $(\bf pNat,\uplus)_{-\eafw}$ is monoidally equivalent to $({\bf pNat(\N,\N)},\star)_{+I}$.
		\item Both $(\bf pNat,\uplus)_{-\eafw}$ and $({\bf pNat(\N,\N)},\star)_{+I}$ are traced.
	\end{enumerate}
\end{corollary}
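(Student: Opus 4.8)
The plan is to obtain both parts as concrete instances of the general machinery already set up, with $\N \in Ob(\mathbf{pNat})$ in the role of the self-similar object and $\mathbf{pNat}$ itself in the role of the monogenic monoidal subcategory it generates.

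For Part~1, I would appeal directly to Corollary~\ref{eafw-equiv-corol}. Since $\mathbf{pNat}$ is by definition the full symmetric monoidal subcategory of $\mathbf{pInj}$ generated by $\N$, and since $\N \cong \N \uplus \N$ via the Cantor pairing (so $\N$ is self-similar with $\code, \decode$ as code/decode arrows), that corollary gives a monoidal equivalence between $(\mathbf{pNat}(\N,\N),\star)_{+I}$ and $(\mathbf{pNat},\uplus)_{-\eafw}$, which is exactly the claim. Alternatively, and more explicitly, one extends the semi-monoidal equivalence $\Phi$ of Definition~\ref{Phifunctor-def} to $\Phi_{+I}$ by sending the freshly adjoined strict unit of $(\mathbf{pNat}_{-I},\uplus)_{+I}$ to the freshly adjoined strict unit of $(\mathbf{pNat}(\N,\N),\star)_{+I}$; since adjoining a disjoint strict unit is 2-functorial, $\Phi_{+I}$ is still full, faithful, and essentially surjective, hence a monoidal equivalence, and one uses the identity $(\mathbf{pNat}_{-I},\uplus)_{+I} = (\mathbf{pNat},\uplus)_{-\eafw}$ from Definition~\ref{units_stuff-def}.

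For Part~2, the first half is immediate from Proposition~\ref{keyequivalences-prop}.1: $(\mathbf{pNat},\uplus)$ is a symmetric traced monoidal category (its trace is that of $(\mathbf{pInj},\uplus)$) with trivial scalars, the unit being the empty set whose endomorphism monoid is a singleton, so $(\mathbf{pNat},\uplus)_{-\eafw}$ is traced — tracing out the unit is the identity on homsets by Vanishing~I, and traces over the new strict unit are forced since it again has trivial scalars (Remark~\ref{V1-rem}). For the second half, that $(\mathbf{pNat}(\N,\N),\star)_{+I}$ is traced, I would use the explicit operation recorded in the proof of Proposition~\ref{keyequivalences-prop}.3, namely $trace(f) = Tr^{\N}_{\N,\N}(\decode f \code)$ for $f \in \mathbf{pNat}(\N,\N)$, which by \cite{PHD} satisfies all the axioms of Definition~\ref{Trace-def} except Vanishing~I; since $(\mathbf{pNat}(\N,\N),\star)_{+I}$ has trivial scalars by construction, this extends uniquely to a genuine categorical trace (Remark~\ref{V1-rem}). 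One can cross-check this against the abstract route: transporting the trace of $(\mathbf{pNat},\uplus)_{-\eafw}$ along the monoidal equivalence of Part~1 yields the same structure up to monoidal equivalence, as asserted in the proof of Proposition~\ref{keyequivalences-prop}.3.

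None of the components require fresh calculation; the constructions are all cited. The only genuine bookkeeping obstacle is checking that the de-element functor $(\ )_{-\eafw}$ and the strictification equivalence of \cite{JHRS} compose in the right order — i.e. that passing to $(\mathbf{pNat}_{-I},\uplus)$, strictifying self-similarity to get $(\mathbf{pNat}(\N,\N),\star)$, and then re-adjoining a strict unit genuinely lands one back at $(\mathbf{pNat},\uplus)_{-\eafw}$ on the nose — and that the concrete Girard-style trace $Tr^{\N}_{\N,\N}(\decode f \code)$ and the abstractly-transported trace agree; both points are routine given the earlier results but are where care is needed.
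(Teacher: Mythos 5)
Your proposal is correct and follows essentially the same route as the paper: Part~1 via Corollary~\ref{eafw-equiv-corol} together with the trivial extension of $\Phi$ to the categories with strict units adjoined, and Part~2 via Proposition~\ref{keyequivalences-prop} with the explicit Girard-style trace $Tr^{\N}_{\N,\N}(\decode f \code)$ (which, expanded through the dynamical algebra, is exactly the componentwise formula the paper records). The two bookkeeping points you flag are indeed the only places requiring care, and both are handled as you describe.
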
	
\begin{proof} \ \\
	\begin{enumerate}
		\item 	This follows immediately from Corollary \ref{eafw-equiv-corol}, and the monoidal equivalence is given by a trivial extension of the $\Phi$ semi-monoidal functor of Definition \ref{Phifunctor-def} above to categories with strict units adjoined.
		\item This follows from Proposition \ref{keyequivalences-prop}. The trace on $(\bf pNat,\uplus)_{-\eafw}$ is simply that of $({\bf pNat },\uplus)$, restricted to non-element arrows. The trace of $({\bf pNat}(\N,\N),\star)_{+I}$ is given by, for all $f\in {\bf pNat}(\N,\N)$,
		\begin{itemize}
			\item $Tr_{I,I}^I (1_I) = 1_I$
			\item $Tr_{\N,\N}^I (f) = f$
			\item $Tr_{\N,\N}^\N (f) = f_{00}\cup \bigcup_{j=0}^\infty f_{01}f^j_{11}f_{10}$ where the components $f_{\_,\_}$ are as given in Corollary \ref{dynamicisoendo-corol}, as
			\[ f_{00} = pfp^\dagger \ \ , \ \  f_{01}= pfq^\ddagger \ \ , \ \ f_{10}= qfp^\ddagger \ \ , \ \ f_{11}= qfq^\ddagger  \]
		\end{itemize}
	\end{enumerate}
\end{proof}

\section{From self-similarity in $\bf pInj$ to (strict) reflexivity in $\bf Int(pInj)$}
As an immediate consequence of the self-similarity of the natural numbers, we may give self-dual self-similar objects in a compact closed category : 
\begin{lemma}\label{NNselfsim-lem}
	The (strictly) self-dual object $(N,N)=(N,N)^*\in Ob({\bf Int(pInj)})$ is self-similar.
	\end{lemma}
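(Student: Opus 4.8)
The plan is to obtain this as a direct instance of the machinery already set up, rather than by any hands-on computation. The object at issue is $(\N,\N)$, where $\N$ is the natural numbers carrying the Cantor pairing of Definition \ref{Cantor-def}. Strict self-duality needs no work at all: by the definition of the dual on objects in ${\bf Int(pInj)}$ (Definition \ref{IPdef}) we have $(X,U)^*=(U,X)$, so $(\N,\N)^*=(\N,\N)$ on the nose, and hence $(\ )^*$ is a dagger at this object -- this is also exactly the content of Corollary \ref{selfdualdagger-corol}.

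For self-similarity the key observation is that $\N$ is itself a self-similar object of the traced symmetric monoidal category $({\bf pInj},\uplus)$: the Cantor pairing $\code:\N\uplus\N\rightarrow\N$ of Definition \ref{Cantor-def} is a bijection, hence an isomorphism of ${\bf pInj}$, with inverse the decode map $\decode:\N\rightarrow\N\uplus\N$. These serve as code / decode arrows exhibiting $\N\cong\N\uplus\N$; by \cite{JHRS} any such pair is as good as any other, since code / decode arrows are unique up to unique isomorphism. Applying the $(\Leftarrow)$ direction of Corollary \ref{intselfsim-corol} with $\C={\bf pInj}$ and $\otimes=\uplus$ then immediately yields that $(\N,\N)\in Ob({\bf Int(pInj)})$ is self-similar, with code / decode arrows
\[ (\code\uplus\decode)\in{\bf Int(pInj)}((\N,\N)\Box(\N,\N),(\N,\N)) \ \mbox{ and } \ (\decode\uplus\code)\in{\bf Int(pInj)}((\N,\N),(\N,\N)\Box(\N,\N)). \]
Self-duality and self-similarity together establish the lemma; by Lemma \ref{reflexchar-lem} they moreover make $(\N,\N)$ a reflexive object of ${\bf Int(pInj)}$, which is the payoff this section is aiming at.

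There is essentially no obstacle here beyond routine bookkeeping -- the lemma is, as the text says, an immediate consequence of results already established. The one point meriting a sentence of care is tracking the associativity canonical isomorphisms of $\uplus$ when checking that the displayed arrows have the stated sources and targets: one uses $(\N,\N)\Box(\N,\N)=(\N\uplus\N,\N\uplus\N)$ together with the definition of the homsets of ${\bf Int(pInj)}$, so that ${\bf Int(pInj)}((\N\uplus\N,\N\uplus\N),(\N,\N))$ unwinds to ${\bf pInj}((\N\uplus\N)\uplus\N,\N\uplus(\N\uplus\N))$. Since (following Section \ref{strictify-sect}) we are deliberately not assuming strict associativity, these re-bracketings really are present; but they are harmless, being absorbed into the clause that code / decode arrows are pinned down only up to unique isomorphism.
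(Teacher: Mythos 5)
Your proposal is correct and follows essentially the same route as the paper: both deduce self-similarity of $(\N,\N)$ from the self-similarity of $\N$ in $({\bf pInj},\uplus)$ (via the Cantor pairing) by invoking the $(\Leftarrow)$ direction of Corollary \ref{intselfsim-corol}, with the paper merely displaying the resulting code / decode arrows $\code\uplus\decode$ and $\decode\uplus\code$ as diagonal rook squares rather than writing them algebraically. Your additional remark about the re-bracketings hidden in the identification of hom-sets is accurate and consistent with the paper's refusal to assume strict associativity.
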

\begin{proof}
	From Corollary \ref{intselfsim-corol}, as $\mathbb N \in Ob({\bf pInj})$ is self-similar, so is $(\mathbb N,\mathbb N)\in Ob({\bf Int(pInj)})$, with the code / decode arrows for $(\N,\N)$ given by the following rook squares : 
\[ 
\small 
\xymatrix{ 
\N \uplus \N 						& \N \ar[l]_{\decode} \ar[d]^0	& & & \N & \N\uplus \N\ar[l]_{\code} \ar[d]^0 \\  
\N \uplus \N \ar[u]^0	\ar[r]_{\code}	& \N 						& & & \N \ar [u]^{0} \ar[r]_{\decode} & \N\uplus \N    \\  
}
\]
	\end{proof}

\begin{corollary}\label{NNreflex-corol}
	$(\N,\N)\in Ob({\bf Int(pInj)})$ is an extensionally reflexive object i.e. it is isomorphic to its own internal hom., so  $[(\N,\N)\rightarrow(\N,\N)] \cong (\N,\N)$.
	\end{corollary}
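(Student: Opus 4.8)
The plan is to read the result straight off the characterisation of reflexive objects established earlier. By Lemma~\ref{reflexchar-lem}, in any compact closed category the reflexive objects are exactly the self-dual self-similar objects, so it suffices to verify both properties for $(\N,\N)\in Ob({\bf Int(pInj)})$ and then invoke that lemma.

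First I would observe that $(\N,\N)$ is (strictly) self-dual: the dual on objects in $\bf Int(pInj)$ is $(X,U)^*=(U,X)$, so $(\N,\N)^*=(\N,\N)$ on the nose. Self-similarity is then precisely the content of Lemma~\ref{NNselfsim-lem}, which itself follows from Corollary~\ref{intselfsim-corol} together with the self-similarity of $\N\in Ob({\bf pInj})$ exhibited by the Cantor pairing $\code$ and its inverse $\decode$; the explicit code/decode rook squares for $(\N,\N)$ are the two diagrams displayed in that lemma. With both properties in hand, Lemma~\ref{reflexchar-lem} immediately gives that $(\N,\N)$ is reflexive, i.e. $[(\N,\N)\rightarrow(\N,\N)]\cong(\N,\N)$.

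To make the isomorphism concrete rather than merely invoking the abstract lemma, I would unwind the internal hom in $\bf Int(pInj)$: by definition $[(\N,\N)\rightarrow(\N,\N)] = (\N,\N)^*\Box(\N,\N) = (\N,\N)\Box(\N,\N) = (\N\uplus\N,\N\uplus\N)$, and composing the symmetry of $\Box$ (which in this case is an endomorphism of $(\N\uplus\N,\N\uplus\N)$) with the code/decode rook squares of Lemma~\ref{NNselfsim-lem} produces a mutually inverse pair of arrows between $(\N\uplus\N,\N\uplus\N)$ and $(\N,\N)$. These serve as the $\app$ and $\lam$ isomorphisms witnessing reflexivity.

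There is no real obstacle here: every ingredient has already been assembled in the preceding lemmas, and the construction of $\bf Int$ was set up (Lemma~\ref{corresponding-lem}) precisely so that this object acquires a rich enough structure to be reflexive even though $\bf pInj$ has no reflexive objects. The only point requiring a little care is keeping track of sources and targets of the rook squares — equivalently, reading $\decode\in{\bf pInj}(\N,\N\uplus\N)$ and $\code\in{\bf pInj}(\N\uplus\N,\N)$ through the homset convention ${\bf Int(pInj)}((\N,\N),(\N,\N)\Box(\N,\N))={\bf pInj}(\N\uplus(\N\uplus\N),\N\uplus(\N\uplus\N))$ — but this bookkeeping has already been carried out in Lemma~\ref{NNselfsim-lem}, so nothing new is needed.
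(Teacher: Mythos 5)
Your proposal is correct and follows exactly the paper's route: the paper's proof is the one-line observation that the result is immediate from Lemma~\ref{reflexchar-lem}, given the strict self-duality of $(\N,\N)$ and the self-similarity established in Lemma~\ref{NNselfsim-lem}. Your extra unwinding of the internal hom is sound (though the symmetry of $\Box$ is not actually needed, since $(\N,\N)^*\Box(\N,\N)=(\N,\N)\Box(\N,\N)$ on the nose and the code/decode rook squares alone already give the isomorphism).
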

\begin{proof} This is immediate from the characterisation of extensionally reflexive objects given in Lemma \ref{reflexchar-lem}.
	\end{proof}

 Our stated aim is to provide concrete examples of how reflexivity may be strictified in a compact closed category --- how we may give a compact closed subcategory containing the specified reflexive object, together with a monoidal equivalence to another compact closed category in which this reflexivity is exhibited by identity arrows. We do so by applying the abstract procedures laid out in Section \ref{strictify-sect} to the above natural example of an extensionally reflexive object in $\bf Int(pInj)$.

The following, although individually straightforward, will prove powerful : 
\begin{proposition}\label{pNatresults-prop}\ \\
	\begin{enumerate}
	\item $\bf Int(pNat)$ is a compact closed category where all objects $(X,U)\in Ob({\bf Int(pNat)})$ satisfying $X\ncong \emptyset \ncong U$ are isomorphic.
	\item Let us denote by $\bf IpN$ the full monoidal subcategory of $\bf Int(pNat)$ generated by the self-dual object $(\N,\N)$. Then $\bf IpN$  is a compact closed category where all non-unit objects are isomorphic.
	\end{enumerate}
	\end{proposition}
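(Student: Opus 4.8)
The plan is to reduce both parts to facts already established in the excerpt: that $\N$ is self-similar in ${\bf pNat}$ (the Cantor pairing of Definition~\ref{Cantor-def} witnesses $\N\cong\N\uplus\N$), that the ${\bf Int}$ construction of Definition~\ref{Int-def} turns the traced symmetric monoidal category $({\bf pNat},\uplus)$ into a compact closed category, and that $(\N,\N)$ is a reflexive strictly self-dual object so that Corollary~\ref{twoobjects-corol} applies to it.

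For Part~1, I would first record that $Ob({\bf pNat})$ consists of $\emptyset$ together with the finite nonempty $\uplus$-words built from $\N$, and that every such nonempty word is isomorphic to $\N$ inside ${\bf pNat}$. This is an easy induction on the length of the word: the base case is trivial, and the inductive step combines the self-similarity isomorphism $\N\uplus\N\cong\N$ with functoriality of $\uplus$; the point is that the resulting bijections of sets are arrows of the full subcategory ${\bf pNat}$, hence isomorphisms there. Consequently, if $(X,U)\in Ob({\bf Int(pNat)})$ satisfies $X\ncong\emptyset\ncong U$, then $X\cong\N\cong U$ in ${\bf pNat}$. Since the functors $L_I$, $R_I$ of Proposition~\ref{cocontra-prop} and the tensor $\Box$ all preserve isomorphisms, and $(X,U)\cong L_I(X)\Box R_I(U)$, applying them to the two component isomorphisms yields $(X,U)\cong(\N,\N)$. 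That ${\bf Int(pNat)}$ is itself compact closed is immediate from Definition~\ref{Int-def} applied to the traced category $({\bf pNat},\uplus)$.

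For Part~2, I would invoke Corollary~\ref{twoobjects-corol} applied to the object $N=(\N,\N)$ of the compact closed category ${\bf Int(pNat)}$. This object is strictly self-dual, since $(\N,\N)^*=(\N,\N)$ on the nose; it is self-similar by Corollary~\ref{intselfsim-corol}, since $\N$ is self-similar in ${\bf pNat}$; and hence it is extensionally reflexive by Lemma~\ref{reflexchar-lem} (in agreement with Lemma~\ref{NNselfsim-lem} and Corollary~\ref{NNreflex-corol}). Thus $(\N,\N)$ is a reflexive strictly self-dual object, and parts~1 and~2 of Corollary~\ref{twoobjects-corol} state exactly that ${\bf IpN}$, the monoidal subcategory it generates, is compact closed and has all of its non-unit objects isomorphic (necessarily to $(\N,\N)$).

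The computations involved are all routine; the only points that require genuine care are confirming that the isomorphisms witnessed in ${\bf pInj}$ or ${\bf Int(pInj)}$ actually lie inside the full subcategories ${\bf pNat}$, ${\bf Int(pNat)}$, and ${\bf IpN}$ — which is automatic, since these are full subcategories containing the relevant objects — and citing Corollary~\ref{twoobjects-corol} in its stated form rather than re-deriving compact closure of a generated subcategory by hand, which would otherwise be the most tedious step. There is thus no substantial obstacle here beyond this bookkeeping.
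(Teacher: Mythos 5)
Your proof is correct, and for Part~1 it is essentially the paper's argument in functorial clothing: the paper also starts from the observation that all non-unit objects of ${\bf pNat}$ are pairwise isomorphic, but then writes down the witnessing isomorphism in ${\bf Int(pNat)}((X,U),(Y,V))$ explicitly as a rook square with components $\phi$ and $\psi^{-1}$ (the inverse appearing because the second coordinate is contravariant), whereas you obtain the same arrow by decomposing $(X,U)\cong L_I(X)\Box R_I(U)$ and letting the contravariance of $R_I$ absorb that bookkeeping. For Part~2 your route genuinely differs: you invoke the general Corollary~\ref{twoobjects-corol} (after checking that $(\N,\N)$ is strictly self-dual, self-similar, and hence reflexive), while the paper argues directly that ${\bf IpN}$ is closed under tensor and dual, that all its non-unit objects fall under Part~1, and that fullness guarantees it contains the unit/co-unit arrows. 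Your appeal to the corollary is legitimate and is even anticipated by the paper, which explicitly advertises this proposition as the concrete instance of parts~1 and~2 of Corollary~\ref{twoobjects-corol}; what the paper's hands-on version buys is the explicit rook squares, which are reused in the later concrete computations, whereas your version buys economy and makes clear that nothing specific to ${\bf pInj}$ is needed beyond self-similarity of $\N$. No gap in either part.
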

\begin{proof}\ \\
	\begin{enumerate}
	\item By construction, arbitrary non-unit objects $X,Y,U,V\in Ob({\bf pNat})$ are all isomorphic. Let us fix  isomorphisms $\phi\in {\bf pNat}(X,Y)$ and $\psi\in {\bf pNat}(U,V)$. Then the following rook squares give an isomorphism in $\bf Int(pNat)((X,U),(Y,V))$, together with its inverse : 
	\[ 
	\small 
	\xymatrix{ 
		Y						& X  \ar[l]_{\phi} \ar[d]^0	& & & X & Y\ar[l]_{\phi^{-1}} \ar[d]^0 \\  
		V\ar[u]^0	\ar[r]_{\psi^{-1}}	& U						& & & U \ar [u]^{0} \ar[r]_{\psi} & V    \\  
	}
	\]
	\item Note that point 1. above does {\em not} imply that all non-unit objects of $\bf Int(pNat)$ are isomorphic; counterexamples are provided by $(X,I)$ and $(I,X)$, where $X\neq \emptyset \in Ob({\bf pNat})$. However, by construction, the full monoidal subcategory $\bf IpN$ generated by $(\N,\N)$ is closed under tensor and dual, and by point 3., all non-unit objects are isomorphic.  As it is a full subcategory, it also contains the relevant unit / co-unit arrows.
\end{enumerate}
\end{proof}
Following the program laid out in above, we are of course moving towards a monoidal equivalence between the category $\bf IpN$ of part 4. above, and a compact closed category with a single non-unit object.

\subsection{A monoidal strictification of extensional reflexivity}
Corollary \ref{tracedmonoid-corol} above gives us precisely what we require for the strictification of reflexivity at a reflexive object of a compact closed category. We have a two-object traced monoidal category (i.e. $({\bf pNat}(\N,\N),\star)_{+I}$) that is monoidally equivalent to a small subcategory of a (de-elemented version of) $\bf pInj$, generated by a reflexive object. 

\begin{remark}\label{G-rem}
The next step is to apply the $\bf Int$ construction to this two-object traced monoidal category; 
this will result in a four-object compact closed category that contains a strictly reflexive object, $(\N,\N)$. We then consider the two-object compact closed monoidal subcategory generated by this distinguished object.
\end{remark}

\begin{definition} \label{G-def}  As our notation is in danger of becoming unwieldy at this point, let us simply denote by $\mathfrak G$ the two-object compact closed category resulting from the steps of Remark \ref{G-rem} above.
	
Expanding out the definitions results in the following:
\begin{description}
	\item[{\bf Objects}] $Ob(\mathfrak G) = \{ (I,I) , (\N,\N) \}$
	\item[{\bf Hom-sets}] By construction, all homsets (excluding the endomorphism monoid of the units) are equal : 
	\begin{description}
		\item[{\bf Scalars}] $\mathfrak G ((I,I),(I,I)) = \{ 1_I \} $
		\item[{\bf Elements}] $\mathfrak G ((I,I),(\N,\N)) = {\bf pNat}(I\star \N,\N\star I) = {\bf pNat}(\N,\N)$
		\item[{\bf Co-Elements}] $\mathfrak G ((\N,\N),(I,I)) = {\bf pNat}(\N\star I ,I \star \N) = {\bf pNat}(\N,\N)$
		\item[{\bf Endomorphisms}] $\mathfrak G ((\N,\N),(\N,\N)) = {\bf pNat}(\N\star \N,\N\star \N) = {\bf pNat}(\N,\N)$
		\end{description}
	 It will be convenient to describe arrows of homsets using the correspondence between members  of ${\bf pNat}(\N,\N)$ and rook squares over ${\bf pNat}(\N,\N)$ of Remark \ref{rooksquarebij-rem}. Given the same endomorphism $f\in {\bf pNat}(\N,\N)$, we draw it in different ways as a rook square\footnote{Note the use of a formal zero arrow, to denote an empty homset, in the rook squares for elements / co-elements. This is simply a notational convenience -- we are not claiming the existence of a zero arrow between the formal unit object and other objects in the same category. Thus we have not quite arrived back in the category $\bf pInj$, although the difference between an empty homset, and a homset containing the nowhere-defined function on the empty set, is subtle!} in different homsets of $\mathfrak G$.
	\begin{description}
		\item[{\bf Elements}] In $\mathfrak G ((I,I),(\N,\N))$, we draw $f$ as 
			$\vcenter{
			\vbox{ \small
		 \xymatrix{  \N & I \ar[l]_0  \ar[d]^{1_I} \\ \N \ar[u]^{f} \ar[r]_{0 } & I  } } }$
		\item[{\bf Co-Elements}] In $\mathfrak G ((\N,\N),(I,I))$, we draw $f$ as
			$\vcenter{
			\vbox{ \small \xymatrix{  I & \N \ar[l]_{0} \ar[d]^{f} \\ I \ar[u]^{1_I} \ar[r]_{0} & \N  } } }$
		\item[{\bf Endomorphisms}] In $\mathfrak G ((\N,\N),(\N,\N))$, we draw $f$ as 
			$\vcenter{
			\vbox{ \small \xymatrix{  \N & \N \ar[l]_{pfp^\ddagger} \ar[d]^{qfp^\ddagger} \\ \N \ar[u]^{pfq^\ddagger} \ar[r]_{qfq^\ddagger} & \N  }
		} }$\\
		and refer to Remark \ref{rooksquarebij-rem} for the observation that this rook square uniquely determines and is determined by $f\in {\bf pNat}(\N,\N)$. 
	\end{description}
	\item[{\bf Composition}] This is given by the `pasting rook squares and summing over paths' described in Definition \ref{IPdef}. 
	\item[{\bf The tensor}] Tensors with the unit object are defined by strictness, so $(I,I)\Box \_$ and $\_ \Box (I,I)$ are identity functors. At the non-unit object, we have  $(\N,\N)\Box(\N,\N)=(\N,\N)$, and for arrows, the tensor is defined on rook square representations, as \\ 
	\scalebox{0.9}{$
		\vcenter{
			\vbox{ \small
				\xymatrix{  \N & \N \ar[l]_{f_{00}} \ar[d]^{f_{10}} \\ \N \ar[u]^{f_{01}} \ar[r]_{f_{11}} & \N  }
			} 
		} \ {\large{\Box}} \ 
		\vcenter{
			\vbox{ 
				\xymatrix{  \N & \N \ar[l]_{g_{00}} \ar[d]^{g_{10}} \\ \N \ar[u]^{g_{01}} \ar[r]_{g_{11}} & \N  }
			} 
		}
		\ \ \  = \ \ \ 
		\vcenter{
			\vbox{ 
				\xymatrix@R+1.2pc@C+2.3pc{  \N & &  \N \ar[ll]|{\small p^\ddagger f_{00}p\cup q^\ddagger g_{00}q } \ar[d]|{\tiny q^\ddagger g_{10}p \cup p^\ddagger f_{10}q } \\  \N \ar[u]|{\tiny p^\ddagger f_{01}q \cup q^\ddagger g_{01}p }  \ar[rr]|{\tiny p^\ddagger g_{11}p \cup q^\ddagger f_{11}q  } & & \N  }
			} 
		}
		$}
\item [{\bf The canonical isomorphisms}] \ \\
\begin{description}
	\item[{\bf Associativity}] From Lemma \ref{Intcanonisos-lem}, the associator $T$ for the above tensor is  $\tau\star \tau^{-1}$, where $\star$ is the tensor of ${\bf pNat}(\N,\N)_{+I}$ and $\left( p^\ddagger\right)^2 p \cup p^\ddagger q^\ddagger pq \cup q^\ddagger q^2$ is the corresponding associator.  In rook square notation, this gives \scalebox{0.8}{$\vcenter{
			\vbox{ \small \xymatrix{  \N & \N \ar[l]_{\tau} \ar[d]^{0} \\ \N \ar[u]^{0} \ar[r]_{\tau^{-1}} & \N  }
		} }$}\\
	\item[{\bf Symmetry}] Also from Lemma \ref{Intcanonisos-lem}, the commutativity isomorphism for $\_ \Box\_$ is given by $\sigma\star \sigma$, where $\sigma=q^\dagger p \cup p^\dagger q$  is the commutativity isomorphism for $\_ \star \_$. In rook square notation this is simply  \scalebox{0.8}{$\vcenter{
		\vbox{ \small \xymatrix{  \N & \N \ar[l]_{\sigma} \ar[d]^{0} \\ \N \ar[u]^{0} \ar[r]_{\sigma} & \N  }
	} }$}\\
\item [{\bf The unit object}] By construction, we have a strict unit object, $(I,I)$.
\end{description}
\item [{\bf The compact closed structure}] \ \\
\begin{description}
\item [{\bf The dual}] The dual of $\mathfrak G$ is a dagger, which we nevertheless write as $( \ )^*$, to avoid confusion with the generalised inverse of $\bf pInj$. Thus, on objects, $\mathbb N^*=\mathbb N$, and on arrows it is given by Definition \ref{Int-def}. Explicitly, given an arrow in $\mathfrak G ((X,U),(Y,V))$ represented as a rook square, its dual is given by  
\scalebox{0.8}{$
	\left( \vcenter{
		\vbox{ \small
			\xymatrix{  Y & X \ar[l]_{a} \ar[d]^{c} \\ V \ar[u]^{b} \ar[r]_{d} & U  }
		} 
	} 
	\right)^*\ \  {\large = } \ \  
	\vcenter{
		\vbox{ \small
			\xymatrix{  U & V \ar[l]_{d} \ar[d]^{b} \\ X \ar[u]^{c} \ar[r]_{a} & Y  }
		} 
	}
	$} 
\item [{\bf The unit and co-unit}] Recall that within $\mathfrak G$, 
\[ (\N,\N)\Box (\N,\N)^* \ = \ (\N,\N) \ = \ (\N,\N)^* \Box (\N,\N) \] 
We then have the two distinguished arrows for the compact closed structure,  
$\eta : (I,I) \rightarrow (\N,\N)$ and $\epsilon : (\N,\N) \rightarrow (I,I)$ 
given by, respectively :
\[
\vcenter{
	\vbox{ 
		\xymatrix{  
			\N & I \ar[l]_{0} \ar[d]^{0} \\ 
			\N \ar[u]^{\sigma} \ar[r]_{0} & I  }
	} 
} 
\ \ \ \mbox{ and } \ \  \
\vcenter{
	\vbox{ 
		\xymatrix{  
			I  & \N \ar[l]_{0} \ar[d]^{\sigma} \\ 
			I \ar[u]^{0} \ar[r]_{0} & \N  }
	} 
}
\]
where $\sigma=p^\ddagger q \cup q^\ddagger p$ is the symmetry map for $\_ \star \_$, the tensor of the underlying traced monoidal category.
\end{description}
\end{description}
	  
\end{definition}

\begin{remark}
The above two-object compact closed category provides an example of the strictification of extensional reflexivity described in abstract terms in the first half of this paper; we have already seen that the following key properties are satisfied : 
	\begin{enumerate}
		\item 	The compact closed category $(\mathfrak G, \Box , (\ )^*,(I,I))$ is monoidally equivalent to the compact closed subcategory of $\bf Int(pInj)$ monoidally generated by $(\N,\N)\in Ob({\bf Int(pInj)})$.   
		\item	The object $(\N,\N)\in Ob({\bf Int(pInj)})$ is extensionally reflexive.
		\item	The unique non-unit object $(\N,\N)\in Ob(\mathfrak G)$ is strictly extensionally reflexive.
		\end{enumerate}

	\end{remark}

\section{Algebraic aspects}\label{algebraexam-sect}
Having established concrete examples of the relevant abstract category theory, we move on to considering the  algebraic aspects  -- precisely, the embeddings of Thompson's $\mathcal F$ associated with coherence for associativity, and the simple Abramsky-Heunen Frobenius monoid associated with strict reflexivity in a compact closed category.

\begin{definition}
Following Point \ref{samehomsets-pt} of Section \ref{strictdisc-sect}, we observe that the endomorphism monoids $\mathfrak G((\N,\N),(\N,\N))$  and $\bf pInj (\N,\N)$ have the same underlying set. Let us denote this by $\mathcal H$. It will be convenient to use rook matrix notation for elements of $\Hi$, and write a partial injection $f$ as $\left( \begin{array}{cc} p f p^\ddagger & p f q^\ddagger \\ q f p^\ddagger & q f q^\ddagger \end{array} \right)$; entirely equivalently, we say that $\left( \begin{array}{cc} 
	a & b \\ c & d \end{array}
\right)$ is simply shorthand for the partial injection $
p^\ddagger ap \cup p^\ddagger bq \cup q^\ddagger cp \cup q^\ddagger dq$.
(As a notational device, we will also denote composition within $\bf pInj$ itself simply by concatenation).

Let us denote the composition on $\Hi$ arising from $\bf pInj$ as 
$\_ \cdot \_ : \Hi \times \Hi \rightarrow \Hi$, and that arising from $\mathfrak G$ as $\_ \circ \_ : \Hi\times \Hi \rightarrow \Hi$.  Explicitly, these are given by : 
\begin{itemize}
	\item  $
			\left( 
				\begin{array}{cc} 	e & f \\ 
									g & h 
				\end{array} 
			\right) 
			\cdot 
			\left( 
				\begin{array}{cc} 
						a & b \\ 
						c & d 
				\end{array} 
			\right) 
			= 
			\left(  \begin{array}{cc} ea \cup fc &  eb \cup fd  \\ ga \cup hc &  gb \cup hd \end{array} \right)$.
	\item $
	\left( 
	\begin{array}{cc} 	e & f \\ 
	g & h 
	\end{array} 
	\right) 
	\circ 
	\left( 
	\begin{array}{cc} 
	a & b \\ 
	c & d 
	\end{array} 
	\right) 
	= 
	\left( \begin{array}{cc}
	    \bigcup_{j=0}^\infty e\left( bg\right)^j a 
	    &
	    f \cup \ \bigcup_{j=0}^\infty eb \left( gb^j \right) h
	    \\
	    c \cup \ \bigcup_{j=0}^\infty dg \left( bg^j \right) a
	    &
	    \bigcup_{j=0}^\infty d\left( fc\right)^j h
	    \end{array} \right)$
	\end{itemize}

By construction, we have two additional operations $\_ \star \_  , \_ \Box \_ : \Hi\times \Hi \rightarrow \Hi$ that are symmetric semi-monoidal tensors for  $(\Hi,\cdot)$ and $(\Hi,\circ)$ respectively. These may be given explicitly, by : 
\begin{itemize}
	\item$
	\left( 
	\begin{array}{cc} 	e & f \\ 
	g & h 
	\end{array} 
	\right) 
	\star
	\left( 
	\begin{array}{cc} 
	a & b \\ 
	c & d 
	\end{array} 
	\right) 
	= 
	\left(  \begin{array}{cc}p^\ddagger ep \cup p^\ddagger fq \cup q^\ddagger gp \cup q^\ddagger hq &  0  \\ 0  & p^\ddagger a p \cup p^\ddagger b q \cup q^\ddagger c p \cup q^\ddagger d q \end{array} \right)$.
	\item $
	\left( 
	\begin{array}{cc} 	e & f \\ 
	g & h 
	\end{array} 
	\right) 
	\Box 
	\left( 
	\begin{array}{cc} 
	a & b \\ 
	c & d 
	\end{array} 
	\right) 
	= 
	\left(  \begin{array}{cc} p^\ddagger a p \cup q^\ddagger e q &  p^\ddagger b q \cup q^\ddagger f p   \\ p^\ddagger c q \cup q^\ddagger g p  &  p^\ddagger h p \cup q^\ddagger d q  \end{array} \right)$.
	\end{itemize}
The symmetry and associativity isomorphisms for the semi-monoidal monoid $(\Hi,\cdot,\star)$ are given by, respectively, \[ \sigma = \left( \begin{array}{cc} 0 & 1 \\ 1 & 0 \end{array} \right) \ \ \mbox{ and } \ \ \tau = \left( \begin{array}{cc} p^\ddagger & q^\ddagger p \\ 0  & q\end{array} \right) \]
The inverse of $\tau$, with respect to the composition $\_ \cdot \_$ is given explicitly by $\tau' = \left(\begin{array}{cc} p & 0 \\ p^\ddagger q  & q^\ddagger \end{array}\right)$, and $\sigma$ is self-inverse w.r.t. the same composition.

Similarly, the associativity and symmetry isomorphisms for the semi-monoidal monoid $(\Hi,\circ,\Box)$ are given by, respectively, $S = \left( \begin{array}{cc} \sigma & 0 \\ 0 & \sigma \end{array} \right)$ and $T = \left( \begin{array}{cc} \tau & 0 \\ 0 & \tau'\end{array} \right)$. 

Expanding out these definitions in terms of the dynamical algebra, we get $S = \left( \begin{array}{cc} p^\ddagger q \cup q^\ddagger p & 0 \\ 0 & p^\ddagger p \cup q^\ddagger p \end{array} \right)$  and 
\[ T = \left( \begin{array}{cc} \left( p^\ddagger \right)^2p \cup p^\ddagger q^\ddagger pq \cup q^\ddagger q^2 & 0 \\ 0 & p^\ddagger p^2 \cup q^\ddagger p^\ddagger qp \cup \left(q^\ddagger\right)^2 q \end{array} \right) 
\]

$S$ is then its own inverse, w.r.t. both compositions on $\Hi$. The inverse of $T$ (again, w.r.t. both compositions) is given by 
\[ T'= \left( \begin{array}{cc}
p^\ddagger p^2 \cup q^\ddagger p^\ddagger qp \cup \left(q^\ddagger\right)^2 q & 0 \\
	& \left( p^\ddagger \right)^2p \cup p^\ddagger q^\ddagger pq \cup q^\ddagger q^2 
\end{array} \right) \]
\end{definition}

The following identities are then immediate from both the algebraic description, and the details of the $\bf Int$ construction.
\begin{lemma}The above distinct associativity and symmetry elements are related as follows : 
\begin{itemize}
	\item $S=\sigma \star \sigma = \sigma \cdot S \cdot \sigma$
	\item $T=\tau \star \tau'$
	\item  $T'=\sigma \cdot T \cdot \sigma$
	\end{itemize}
\end{lemma}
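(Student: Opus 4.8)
The plan is to verify each of the three identities directly, using the explicit formulae for $S$, $T$, $\sigma$, $\tau$, $\tau'$ in the dynamical algebra already recorded in the preceding definition, together with the formulae for the two tensors $\star$ and $\Box$ and the two compositions $\cdot$ and $\circ$. Since $S$ and $T$ are diagonal rook matrices (by construction, via Lemma \ref{Intcanonisos-lem}, $S = \sigma\star\sigma$ and $T = \tau\star\tau'$ are the $\bf Int$-associator/symmetry built from the $\star$-data), the first two identities $S = \sigma\star\sigma$ and $T = \tau\star\tau'$ are in fact just the statement of Lemma \ref{Intcanonisos-lem} specialised to this category, so nothing needs proving there beyond unwinding notation; I would simply point to that lemma. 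The substance is therefore the identities $S = \sigma\cdot S\cdot\sigma$ and $T' = \sigma\cdot T\cdot\sigma$.

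For $S = \sigma\cdot S\cdot\sigma$: I would observe that $\sigma = \SmallMatrix{0 & 1 \\ 1 & 0}$ is the matrix that swaps the two coordinates, and conjugating any diagonal matrix $\SmallMatrix{x & 0 \\ 0 & y}$ by it yields $\SmallMatrix{y & 0 \\ 0 & x}$, using only the rule for $\cdot$ (ordinary matrix multiplication with $\cup$ and relational composition) and $\sigma\cdot\sigma = 1$. So it suffices to check that the two diagonal entries of $S$ coincide, i.e. that $p^\ddagger q\cup q^\ddagger p = p^\ddagger p\cup q^\ddagger q$ under the identification used for $\Hi$ — but both are, via the coding $\SmallMatrix{a&b\\c&d}\mapsto p^\ddagger ap\cup p^\ddagger bq\cup q^\ddagger cp\cup q^\ddagger dq$, representations of the same partial injection $\sigma$ on $\N$; indeed $\sigma$ as an element of ${\bf pNat}(\N,\N)$ is self-dual, equalling its own conjugate by the swap, which is exactly what "$S$ is its own inverse w.r.t. both compositions" already records. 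Concretely one checks $p^\ddagger(p^\ddagger q)p\cup p^\ddagger(q^\ddagger p)q\cup q^\ddagger(\cdot)p\cup q^\ddagger(\cdot)q$ equals the analogous expansion of the other entry using the relations $pp^\ddagger=1=qq^\ddagger$, $pq^\ddagger=0=qp^\ddagger$, $p^\ddagger p\cup q^\ddagger q = 1$.

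For $T' = \sigma\cdot T\cdot\sigma$: by the same conjugation-of-a-diagonal-matrix principle, $\sigma\cdot\SmallMatrix{\tau&0\\0&\tau'}\cdot\sigma = \SmallMatrix{\tau'&0\\0&\tau}$, so I must identify this with the stated $T'$. Reading off the entries of $T'$ from the definition, its first diagonal entry is $p^\ddagger p^2\cup q^\ddagger p^\ddagger qp\cup(q^\ddagger)^2 q$, which is precisely the dynamical-algebra expansion of $\tau'$, and its second entry is $(p^\ddagger)^2 p\cup p^\ddagger q^\ddagger pq\cup q^\ddagger q^2$, the expansion of $\tau$; this is exactly the relation between $T$ and $T'$ as associator and inverse-associator already displayed. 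So the identity amounts to checking that these dynamical-algebra words really are $\tau'$ and $\tau$ respectively, which is a short arithmetic verification on the generators $p(n),q(n),p^\ddagger(n)=2n,q^\ddagger(n)=2n+1$ — e.g. evaluating $(p^\ddagger)^2 p\cup p^\ddagger q^\ddagger pq\cup q^\ddagger q^2$ on $n\bmod 4\in\{0,1,3\}$ and matching against the piecewise formula for $\tau$.

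The main obstacle, such as it is, is purely bookkeeping: one must be careful that the "same underlying set $\Hi$" carries two different compositions, and that $\sigma$, $T$ are being conjugated with respect to $\cdot$ here (the claim is phrased for that composition, though by the remark $S,T,T'$ are also inverses for $\circ$), so the manipulation "conjugating a diagonal matrix by the swap transposes its diagonal" must be justified using the explicit $\cdot$-formula $\SmallMatrix{e&f\\g&h}\cdot\SmallMatrix{a&b\\c&d} = \SmallMatrix{ea\cup fc & eb\cup fd\\ ga\cup hc & gb\cup hd}$ and not some tacit appeal to matrix algebra over a ring — the entries live in the inverse monoid ${\bf pInj}(\N,\N)$, so only $\cup$-of-disjoint-injections and composition are available. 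Once that is granted, all three identities reduce to the already-stated self-duality/inverse facts about $\sigma$, $\tau$, $\tau'$, and there is genuinely nothing deep to do.
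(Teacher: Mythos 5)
Your overall strategy --- reading $S=\sigma\star\sigma$ and $T=\tau\star\tau'$ off Lemma \ref{Intcanonisos-lem} (equivalently, off the explicit formula for $\_\star\_$ on rook matrices), and obtaining the two conjugation identities from the fact that $\sigma\cdot(\,\_\,)\cdot\sigma$ transposes the diagonal of a diagonal rook matrix --- is exactly what the paper's one-line proof (``these follow, simply by construction'') is compressing, and the computations you outline for $T'=\sigma\cdot T\cdot\sigma$ (matching the dynamical-algebra words $p^\ddagger p^2\cup q^\ddagger p^\ddagger qp\cup(q^\ddagger)^2q$ and $(p^\ddagger)^2p\cup p^\ddagger q^\ddagger pq\cup q^\ddagger q^2$ against $\tau'$ and $\tau$) do go through. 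Your caution about which of the two compositions the conjugation is taken in, and about not silently importing ring-theoretic matrix algebra, is well placed.

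There is, however, one genuinely false assertion in your treatment of $S=\sigma\cdot S\cdot\sigma$: you claim that $p^\ddagger q\cup q^\ddagger p=p^\ddagger p\cup q^\ddagger q$. This cannot hold --- the left-hand side is the swap $\sigma$ (it sends $2n\mapsto 2n+1$ and $2n+1\mapsto 2n$), while the right-hand side is the identity $1_\N$ by relation~3 of the dynamical algebra, and these are distinct partial injections. Fortunately the identity is not needed: since $S=\left(\begin{array}{cc}\sigma&0\\0&\sigma\end{array}\right)$ has \emph{equal} diagonal entries by definition, conjugation by the swap fixes it outright, with nothing further to verify. (You were likely led astray by the paper's own displayed dynamical-algebra expansion of $S$, whose second diagonal entry is misprinted as $p^\ddagger p\cup q^\ddagger p$ --- not even a partial injection --- where it should read $p^\ddagger q\cup q^\ddagger p$.) With that one sentence deleted and replaced by the trivial observation above, the proof is correct and coincides with the paper's intended argument.
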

\begin{proof}
	These follow, simply by construction.
	\end{proof}

The above structure, consisting of two distinct monoid compositions and two distinct semi-monoidal structures, along with non-trivial interactions between them, then provides examples of the algebraic structures discussed in Sections \ref{F-sect} and \ref{Fmonoid-sect}. 

\begin{theorem}\ 
	\\
	\begin{enumerate}
	\item The submonoid of $(\Hi,\cdot)$ generated by $\{ \tau , \tau' , 1\star \tau , 1\star \tau' \}$ is a group isomorphic to Thompson's group $\mathcal F$, and is also generated by the closure of $\{ \tau ,\tau'\}$ under the composition $\cdot$ and the tensor $\star$.
	\item The submonoid of $(\Hi,\circ)$ generated by $\{ T , T',  1\Box T , 1\Box T'  \}$ is again a subgroup isomorphic to Thompson's group $\mathcal F$, and is also generated by the closure of $\{ T,T'\}$ under the composition $\circ$ and the tensor $\Box$.
	\item The elements 
	\begin{itemize}
		\item $\Delta = \tau \cdot (1 \star \sigma)$
		\item $\nabla = (1\star \sigma) \cdot \tau'$
		\end{itemize}
	satisfy $\Delta \cdot \nabla = 1 = \nabla \cdot \Delta$, but 
	\[ \nabla \circ \Delta = 1 \ \ \mbox{ and } \ \ \Delta \circ \nabla \neq 1 \]
	Hence $\{ \Delta ,\nabla \} $ generates a copy of the bicyclic monoid within $(\Hi , \circ)$.
	\item The above elements satisfy  
	\begin{description}
		\item [The Frobenius condition] $(1\Box \nabla) \circ T' \circ (\Delta\Box 1)  =  \Delta \circ \nabla  =  (\nabla \Box  1) \circ T(1\Box  \Delta)$
		\item [Associativity] $\nabla\circ (1\Box  \nabla) = \nabla\circ (\nabla\Box 1)\circ T$
		\item [Co-Associativity] $(1\Box \Delta)\circ \Delta=((\Delta \Box 1)\circ \Delta) \circ T$. 
		\end{description}and hence generate a copy of the simple Abramsky-Heunen Frobenius algebra within the semi-monoidal monoid $(\Hi,\circ , \Box)$.
	\end{enumerate}
\end{theorem}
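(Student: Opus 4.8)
The plan is to observe that every clause of the theorem is a concrete instantiation of the abstract machinery already established, applied with the traced symmetric monoidal category $\C = {\bf pNat}$ and the strictly self-similar object $N = \N$; the work is therefore almost entirely bookkeeping, namely checking that the explicit rook-matrix data ($\tau,\tau',\sigma,T,T',S,\Delta,\nabla$) written out just above the theorem really are the canonical isomorphisms and Frobenius arrows named in the abstract statements, under the dictionary of Corollary~\ref{dynamicisoendo-corol}.

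First I would dispose of Parts 1 and 2. By Corollary~\ref{tracedmonoid-corol} and the construction of $\mathfrak G$ in Definition~\ref{G-def}, both $(\Hi,\cdot,\star)$ and $(\Hi,\circ,\Box)$ are non-unit semi-monoidal monoids, with canonical associators $\tau$ and $T=\tau\star\tau'$ respectively (Lemma~\ref{Intcanonisos-lem}). I would then simply invoke Theorem~\ref{Fassoc-thm} together with Parts 7 and 8 of Proposition~\ref{notationabuse-prop}: setting $x_0=\tau$, $x_{i+1}=1\star x_i$ (respectively $x_0=T$, $x_{i+1}=1\Box x_i$), functoriality of the tensor and MacLane's pentagon yield the defining relations of $\mathcal F$, the fact that $\mathcal F$ has no non-trivial abelian quotient pins down the isomorphism type, and pentagon shows the $x_j$ generate all canonical associators — hence the closure of $\{\tau,\tau'\}$ under $\cdot$ and $\star$ (resp. of $\{T,T'\}$ under $\circ$ and $\Box$) gives the same subgroup.

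For Part 3, I would first verify the composition-$\cdot$ identities by a one-line calculation using $\sigma\cdot\sigma=1$ and functoriality of $\star$ for $\cdot$, namely $\Delta\cdot\nabla=\tau\cdot(1\star\sigma)\cdot(1\star\sigma)\cdot\tau'=\tau\cdot(1\star(\sigma\cdot\sigma))\cdot\tau'=\tau\cdot\tau'=1$, and symmetrically $\nabla\cdot\Delta=1$. For the composition $\circ$, Proposition~\ref{canonicalsplitmerge-prop} identifies $\Delta=\tau\cdot(1\star\sigma)$ and $\nabla=(1\star\sigma)\cdot\tau'$ as the split and merge arrows of the standard A-H Frobenius algebra at $(\N,\N)\in Ob({\bf Int(pNat)})$; trivial scalars plus Corollary~\ref{onesidedinverse-corol} give $\nabla\circ\Delta=1$, while $(\N,\N)$ not being a unit object, via the lemma following Definition~\ref{AHFMreflex-def} (equivalently Proposition~\ref{NSS-prop}), forces $\Delta\circ\nabla\neq 1$. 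Applying Part 2 of Theorem~\ref{bicyclic-thm} to the pair $r=\nabla$, $s=\Delta$ then produces the copy of the bicyclic monoid generated by $\{\Delta,\nabla\}$ inside $(\Hi,\circ)$. Part 4 is then just Theorem~\ref{buildingFA-thm} read in the case $\C={\bf pNat}$, $N=\N$ — the displayed Frobenius condition, associativity, and co-associativity — together with the remark that trivial scalars make the monoid generated under $\circ$ and $\Box$ the \emph{simple} A-H Frobenius monoid of Definition~\ref{AHFMreflex-def}.

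I do not expect a genuine obstacle here; the points that need care are the orientation conventions — making sure it is $\nabla\circ\Delta$ (and not $\Delta\circ\nabla$) that equals $1$, so that Theorem~\ref{bicyclic-thm}(2) is applied with $r=\nabla$, $s=\Delta$ — and confirming that the explicit rook-matrix expressions for $\Delta,\nabla,T,T',S$ given just before the theorem genuinely coincide with the abstract $\Delta=\alpha(1\otimes\sigma)$, $\nabla=(1\otimes\sigma)\alpha^{-1}$, $T=\tau\otimes\tau'$ of Propositions~\ref{canonicalsplitmerge-prop} and~\ref{notationabuse-prop} under Corollary~\ref{dynamicisoendo-corol}. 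Both verifications are routine arithmetic in the dynamical algebra, but worth spelling out so the reader can check the formulas.
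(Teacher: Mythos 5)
Your proposal is correct, and for Parts 1, 2 and 4 it follows the paper exactly: both reduce Parts 1--2 to Theorem~\ref{Fassoc-thm} applied to the two semi-monoidal monoid structures on $\Hi$, and Part 4 to Theorem~\ref{buildingFA-thm}. The genuine divergence is in Part 3. You derive everything from the abstract machinery: $\Delta\cdot\nabla=1=\nabla\cdot\Delta$ via functoriality of $1\star(\_)$ and $\sigma\cdot\sigma=1$ (a cleaner argument than any matrix computation), $\nabla\circ\Delta=1$ from Corollary~\ref{onesidedinverse-corol} with trivial scalars, and $\Delta\circ\nabla\neq 1$ from the lemma following Definition~\ref{AHFMreflex-def} together with $(\N,\N)$ not being a unit object. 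The paper explicitly acknowledges that this categorical route works (``this is a corollary of the categorical reasoning of Part 3.\ of Theorem~\ref{FaB-thm}'') but deliberately declines to take it, instead computing $\Delta$, $\nabla$, and their two composites explicitly in the dynamical algebra --- both as rook matrices under $\cdot$ and by summing over paths in rook squares under $\circ$ --- ``as a check that our categorical reasoning is indeed correct.'' What your route buys is brevity and independence from the concrete representation; what the paper's route buys is an independent consistency check on the whole chain of abstract results, plus explicit formulas such as $\Delta\circ\nabla=\left(\begin{smallmatrix} p^\ddagger p & q^\ddagger q \\ q^\ddagger q & p^\ddagger p\end{smallmatrix}\right)$ and the observed curiosity that $(\Delta\circ\nabla)\cdot(\Delta\circ\nabla)=1$, which the abstract argument cannot see. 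If you take your route, you should still carry out the dictionary check you flag at the end (that the displayed rook-matrix expressions really are the images of $\alpha(1\otimes\sigma)$ and $(1\otimes\sigma)\alpha^{-1}$ under Corollary~\ref{dynamicisoendo-corol}), since otherwise Part 3 as stated --- about those specific elements of $\Hi$ --- is not literally established.
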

\begin{proof} 
	\ \\
	\begin{enumerate}
		\item As $(\Hi,\cdot, \star)$ is a semi-monoidal monoid, this follows directly from Theorem \ref{Fassoc-thm}.
		\item Similarly, $(\Hi,\circ, \Box)$ is a semi-monoidal monoid; this again follows directly from Theorem \ref{Fassoc-thm}.
		\item Although this is a corollary of the categorical reasoning of Part 3. of Theorem \ref{FaB-thm}, it is also perhaps the last point at which a purely algebraic proof is readily accessible, so we prove this directly, as a check that our categorical reasoning is indeed correct. 
		
		Expanding out the definition gives that 
		\[ \Delta = \tau \cdot (1\star \sigma) = 
		\left(
		\begin{array}{cc}
		p^\ddagger & q^\ddagger p \\
		0			& q 
		\end{array} 
		\right) 
		\left(
		\begin{array}{cc}
		1  & 0 \\
		0	& q^\ddagger p \cup p^\ddagger q 
		\end{array} 
		\right)
		=
		\left(
		\begin{array}{cc}
		p^\ddagger & q^\ddagger q \\
		0			& p 
		\end{array} 
		\right)
		\]
		Similarly, 
		\[ \nabla =  (1\star \sigma) \cdot \tau' = 
		\left(
		\begin{array}{cc}
		1 & 0 \\
		0			& q^\ddagger p \cup p^\ddagger q
		\end{array} 
		\right) 
		\left(
		\begin{array}{cc}
		 p & 0 \\
		p^\ddagger q 	& q^\ddagger p  
		\end{array} 
		\right)
		=
		\left(
		\begin{array}{cc}
		p & 0 \\
		q^\ddagger q		& p^\ddagger
		\end{array} 
		\right)
		\]
		Direct matrix composition, together with the defining relations of the dynamical algebra, then show that $\Delta \cdot \nabla = \left( \begin{array}{cc} 1 & 0 \\ 0 & 1 \end{array}\right)  = \nabla \cdot \Delta$.  For the composite derived from the compact closed structure, a straightforward route to calculating the composites $\nabla \circ \Delta$ and $\Delta \circ \nabla$ is given by moving to the `rook squares' formalism, and summing over paths within the following two diagrams : 
		 \[ 
			\xymatrix{ 
												& \ar[l]_{p}\ar@/_5pt/[d]_{q^\ddagger q}	& \ar[l]_{p^{\ddagger}} \ar[d]^{0}\\
				\ar[u]^{0} \ar[r]_{p^\ddagger}	& \ar[r]_{p} \ar@/_5pt/[u]_{q^\ddagger q}	&						\\	
			} \ \ \mbox{ \ \ \ \ \ } \ \ 
		\xymatrix{ 
			& \ar[l]_{p^\ddagger}\ar@/_5pt/[d]_{0}	& \ar[l]_{p} \ar[d]^{q^\ddagger q}\\
			\ar[u]^{q^\ddagger q} \ar[r]_{p}	& \ar[r]_{p^\ddagger } \ar@/_5pt/[u]_{0}	&						\\	
		}
			\]
			Relying on the key identities $pq^\ddagger = 0 =qp^\ddagger$ simplifies this considerably, and gives the composites as :
		\[ \nabla \circ \Delta = \left(\begin{array}{cc} 1 & 0 \\ 0 & 1 \end{array} \right) \ \ \mbox{ and } \ \ 
		\Delta \circ \nabla = \left( \begin{array}{cc} p^\ddagger p & q^\ddagger q \\ q^\ddagger q & p^\ddagger p \end{array}\right) \]
		so $\nabla\circ \Delta=1$ and $\Delta\circ \nabla \neq 1$ as required. (It is worth observing the curiosity that $(\Delta \circ \nabla)\cdot (\Delta \circ \nabla) = 1$. The categorical significance of this is currently unknown). 
		\item For this, we must simply appeal to the abstract category theory already developed, and claim it as a Corollary of Theorem \ref{buildingFA-thm}.
		\end{enumerate}
\end{proof}

\section{Future directions}\label{fd-sect}
I would like to thank Samson Abramsky for the advice that I should never give a comprehensive account of a subject, but rather leave some  `low-hanging fruit' so that other authors have the opportunity to reference me.  Although this was undoubtedly firmly tongue-in-cheek, there are nevertheless many directions that could be pursued. 
\begin{description}
	\item[\bf Categorically] It is hard to avoid the conclusion that this paper needs a good dose of the scalars. The strictification of reflexivity described in Section \ref{strictify-sect} relies on the underlying traced monoidal category having trivial scalars, and the interaction of Frobenius algebras, Thompson's $\mathcal F$ and the bicyclic monoid is also predicated on the monoid of scalars being trivial.   It is presumably possible to get rid of this requirement in both cases, and give a somewhat more sophisticated procedure that can also deal with a non-trival  monoid of scalars.  Although this work remains to be carried out, the key to it is undoubtedly the methods of \cite{SAAS} of adjoining a non-trivial commutative monoid (with involution) of scalars to a traced or compact closed category.
	\item[\bf Logically] This paper has concentrated on categorical \& algebraic aspects of the Geomery of Interaction system, rather than logical interpretations. A logical puzzle arises nevertheless; in the system of \cite{GOI1,GOI2}, there is unavoidably a simple A-H Frobenius monoid, as studied in Section \ref{Fmonoid-sect}, and given explicitly in Section \ref{algebraexam-sect}. The split / merge distinguished arrows  of a Frobenius algebra have the interpretation with quantum-mechanical systems as `fan-out'   -- a restricted form of copying that does not violate the no-cloning theorem \cite{WZ}, but does provide a great deal of computational power to quantum computational systems \cite{HS}.  The question is then simply, `to which features of (resource-sensitive) linear logic do these arrows correspond?'.  Possibly relevant is the fact that we have not, so far, given a treatment of how copying is treated within linear logic generally, or within the systems of \cite{GOI1,GOI2,AHS} in particular. Logically, this is via the $!(\_)$ modality, which -- as shown in \cite{PHD} --  appears in \cite{GOI1,GOI2} as a fixed-point functor on a semi-monoidal monoid $(M,\star)$, defined by $f\star !(f)=!(f)$, for all $f\in M$.  
	\item[\bf Algebraically] Canonical coherence arrows of semi-monoidal monoids have a habit of appearing as interesting or well-known purely algebraic structures  -- the case of Thompson's $\mathcal F$ is a prime example.   Similarly, the logicians' dynamical algebra has long been identified as not only Nivat and Perot's polycyclic monoids \cite{PHD,MVL}, but also the algebra of projections / injections for a semi-monoidal monoid \cite{PHD,TAC}.    When we consider symmetry as well as associativity arrows, we instead have to deal with Thompson's group $\mathcal V$ \cite{MVL06}.    This immediately raises the natural question of the standard or simple A-H Frobenius monoid(s).  It is entirely reasonable to expect these, considered simply as monoids rather than semi-monoidal monoids, to be well-known \& well-studied for their algebraic properties. 
	
	Theorem \ref{FaB-thm} provides some justification for the intuition that the simple A-H F monoid is a combination of Thompson's $\mathcal F$ and the bicyclic monoid, with interactions between the two determined by the Frobenius condition -- thus uniting three different iconic structures from different branches of algebra. 
	
	What is, however, missing from the above account is some subset of the A-H F monoid that generates it by closure under composition only (i.e. not closure under composition and tensor).  This would undoubtedly make it more accessible to the algebra community, which is a natural setting in which it should also be studied.
	Their identification as well-known algebra must surely be close at hand.
	\item [{\bf Notation and diagrammatics}]  The power of a good formalism for both making a subject accessible, and for developing new theory, can hardly be overestimated. This is shown by the utility of the string diagrams formalism for compact closed categories generally, and its application in the categorical quantum mechanics program in particular. 
	
	Unfortunately, it is also singularly inappropriate for working with strict reflexivity generally, and A-H Frobenius monoids in particular, due to its reliance on {\em strict} monoidal tensors. Recall from \cite{JHRS} that one cannot simultaneously have strict associativity and strict self-similarity (\& hence strict reflexivity), except in the trivial case where everything collapses to the unit object.  As the key building blocks of the A-H Frobenius monoids are (necessarily non-strict) associators, such a diagrammatic formalism is simply inapplicable. 
	
	A suitable formalism that illustrates the underlying concepts -- which in many cases are actually quite simple --  without becoming bogged down in syntax, is sorely needed.
\end{description}

\section*{Acknowledgements}
	This paper owes a large debt to Chris Heunen (Edinburgh), both for the concrete results indicated in the text, and many stimulating discussions on the topics listed of this chapter. He has also been responsible for correcting several wrong turns I had taken, which is greatly appreciated.  Thanks are due to Mark Lawson for numerous references and results on the inverse semigroup theoretic side, as well as for neat constructions that significantly simplify otherwise complex results. 	Phil Scott (Ottawa) has been very helpful, with  discussions about linear logic and the Geometry of Interaction in general, how untyped logical and computational systems should be modeled, and in particular the r\^ole of units in untyped systems.  
	I am also very grateful to Noson Yanofsky, for discussions  on the structure and properties of Thompson's group $\mathcal F$ and related structures, and their interaction with categorical coherence generally, and Hilbert Hotel style operations in particular. 
	
	Finally, thanks are due to Samson Abramsky for uncountably many reasons; attempting to list them all would be futile. 
	

\bibliographystyle{plain}
\bibliography{SVbib}
\appendix
\end{document}